\DeclareMathSymbol{\shortminus}{\mathbin}{AMSa}{"39}
\tikzstyle{vertex}=[circle, draw, inner sep=0pt, minimum size=4pt]
\newtheorem{theorem}{Theorem}[section]
\newtheorem{proposition}[theorem]{Proposition}
\newtheorem{lemma}[theorem]{Lemma}
\newtheorem{corollary}[theorem]{Corollary}
\theoremstyle{definition}
\newtheorem{definition}[theorem]{Definition}
\newtheorem{example}{Example}[section]
\newtheorem{remark}{Remark}[section]
\newcommand{\sgn}{\mathrm{sgn}}
\newcommand{\dett}{\mathrm{det}}
\newcommand{\lex}{\mathrm{lex}}
\newcommand{\Deltaw}{\widetilde{\Delta}}
\newcommand{\nablaw}{\widetilde{\nabla}}
\let\oldbigwedge\bigwedge
\let\oldbigvee\bigvee
\let\oldwedge\wedge
\let\oldvee\vee
\renewcommand{\bigwedge}{
  \mathop{
    \vcenter{
      \hbox{\scalebox{0.9}{$\displaystyle\oldbigwedge$}}
    }
  }
}
\renewcommand{\bigvee}{
  \mathop{
    \vcenter{
      \hbox{\scalebox{0.9}{$\displaystyle\oldbigvee$}}
    }
  }
}
\renewcommand{\wedge}{
  \mathop{
    \vcenter{
      \hbox{\scalebox{1.1}{$\displaystyle\oldwedge$}}
    }
  }
}
\renewcommand{\vee}{
  \mathop{
    \vcenter{
      \hbox{\scalebox{1.1}{$\displaystyle\oldvee$}}
    }
  }
}
\newcommand{\wee}{\mathrel{\wedge\kern-4.5pt\vee}}
\newcommand{\bigwee}{\mathrel{\bigwedge\kern-6pt\bigvee}}
\newcommand\bigDiamond{\mathop{\mathpalette\bigDi@mond\relax}}
\newcommand\bigDi@mond[2]{%
  \vcenter{\hbox{\m@th
    \scalebox{\ifx#1\displaystyle 2\else1.2\fi}{$#1\Diamond$}%
  }}%
}
\def\acts{\curvearrowright}
\DeclarePairedDelimiter\floor{\lfloor}{\rfloor}
\tikzset{
  edge/.style   ={line width=1.1pt},
  ghost/.style  ={line width=1.1pt, dashed},
  vwhite/.style ={circle, draw, fill=white, inner sep=0pt, minimum size=6pt, line width=1.1pt},
  vblack/.style ={circle, draw=black, fill=black, inner sep=0pt, minimum size=6pt, line width=1.1pt},
  tinyedge/.style={line width=1.1pt} 
}
\newcommand{\picturematrix}{
    \begin{tikzpicture}[x=1.85cm,y=1cm,z=1cm,rotate around y=137,rotate around x=0, rotate around z=0]
        \draw[->, very thin, gray] (0,0,0) -- (\value{L},0,0) node[above] {$x$};
        \draw[->, very thin, gray] (0,0,0) -- (0,3.5*\value{l}+0.5,0) node[above] {$y$};
        \draw[->, very thin, gray] (0,0,0) -- (0,0,4*\value{l}+0.2) node[above] {$z$};
        
        \draw[fill=cyan] (0,1.2,0) circle[radius=2pt] node[anchor=south west]{$\lambda^{(2)}_1 = 5$};
        \draw[fill=cyan] (0,2.2,0) circle[radius=2pt] node[anchor=south west]{$\lambda^{(2)}_2 = 3$};
        \draw[fill=cyan] (0,3.2,0) circle[radius=2pt] node[anchor=south west]{$\lambda^{(2)}_3 = 2$};
        \draw[-, very thin, dashed, cyan] (0, 1.2, 0) -- (2, 1.2, 0) -- (2, 1.2, 4) -- (0, 1.2, 4) -- cycle;
        \draw[-, very thin, dashed, cyan] (1, 1.2, 0) -- (1, 1.2, 4);
        
        \draw[fill=gray] (0,0,0.7) circle[radius=2pt] node[anchor=south west]{$\lambda^{(3)}_1 = 4$};
        \draw[fill=gray] (0,0,1.7) circle[radius=2pt] node[anchor=south west]{$\lambda^{(3)}_2 = 4$};
        \draw[fill=gray] (0,0,2.7) circle[radius=2pt] node[anchor=south west]{$\lambda^{(3)}_3 = 2$};
    
        \drawsquare{1*\value{l}}{0}{0}{color=black, very thick};
        \putcell{1*\value{l}}{1*\value{l}}{1*\value{l}}{black}{2};
        \putcell{1*\value{l}}{3*\value{l}}{1*\value{l}}{black}{1};
        \putcell{1*\value{l}}{2*\value{l}}{3*\value{l}}{black}{2};
        \putcell{1*\value{l}}{3*\value{l}}{2*\value{l}}{black}{1};
        
        \drawsquare{2*\value{l}}{0}{0}{color=black, very thick};
        \putcell{2*\value{l}}{1*\value{l}}{2*\value{l}}{black}{3};
        \putcell{2*\value{l}}{2*\value{l}}{1*\value{l}}{black}{1};
        
        \draw (1,4,1) node[above]{$\lambda^{(1)}_1 = 6$};
        \draw (2,4,1) node[above]{$\lambda^{(1)}_2 = 4$};
    \end{tikzpicture}
}
\tikzset{
  cubedot/.style={circle,draw,fill=white,line width=1pt,inner sep=1.6pt},
  cubebdot/.style={circle,draw,fill=black,line width=1pt,inner sep=1.6pt},
  cubeedge/.style={line width=0.8pt},
  cubedash/.style={cubeedge, dashed, line width=1pt, cyan}
}
\newcommand{\DrawCube}[3][]{%
\begin{scope}[#1]
  \coordinate (000) at (0,0,0);
  \coordinate (100) at (1,0,0);
  \coordinate (010) at (0,1,0);
  \coordinate (110) at (1,1,0);
  \coordinate (001) at (0,0,1);
  \coordinate (101) at (1,0,1);
  \coordinate (011) at (0,1,1);
  \coordinate (111) at (1,1,1);

  \draw[cubeedge] (000)--(100);
  \draw[cubeedge] (010)--(110);
  \draw[cubeedge] (001)--(101);
  \draw[cubeedge] (011)--(111);
  \draw[cubeedge] (000)--(010);
  \draw[cubeedge] (100)--(110);
  \draw[cubeedge] (001)--(011);
  \draw[cubeedge] (101)--(111);
  \draw[cubeedge] (000)--(001);
  \draw[cubeedge] (100)--(101);
  \draw[cubeedge] (010)--(011);
  \draw[cubeedge] (110)--(111);


  \foreach \P in {000,100,010,110,001,101,011,111}
    \node[cubedot] at (\P) {};

  \foreach \P in {#2}
    \node[cubebdot] at (\P) {};
\end{scope}
}
\newcommand{\picturecubes}{
    \begin{tikzpicture}[x=2cm,y=1.5cm,z=1cm,rotate around y=90,rotate around x=0, rotate around z=0]
        \DrawCube{000,010,101,111}{000/010,010/111,111/101,101/000}
        \DrawCube[xshift=6cm]{000,011,101,110}{000/101,000/011,000/110,101/011,101/110,011/110}   
    \end{tikzpicture}
}
\title[Highest weight vectors of tensors]
{Highest weight vectors of tensors} 
\author[Alimzhan Amanov \and Damir Yeliussizov]{Alimzhan Amanov \and Damir Yeliussizov}
\address{KBTU, Almaty, Kazakhstan, and \newline Institute of Mathematics, Polish Academy of Sciences, Poland}
\email{\href{mailto:alimzhan.amanov@gmail.com}{alimzhan.amanov@gmail.com}}
\address{KBTU, Almaty, Kazakhstan} 
\email{\href{mailto:yeldamir@gmail.com}{yeldamir@gmail.com}}
\begin{document}
\maketitle

\begin{abstract}
	We study highest weight vectors for symmetric and alternating spaces of tensors, whose dimensions are given by generalized Kronecker coefficients. 
	We describe the algebraic relations for classical constructions of corresponding spanning sets of highest weight vectors. 
	The proof is based on important 
	duality that we discover for these highest weight spaces and vectors. 
	As applications of duality, we also give conceptual interpretations to power expansions of Cayley's first hyperdeterminant and its dual exterior form. 
\end{abstract}

\tableofcontents

\section{Introduction}
    Consider the natural tensor action of the group $G = \mathrm{GL}(n, \mathbb{C})^{\times d}$ on the space of tensors $V = (\mathbb{C}^n)^{\otimes d}$. Our primary objects of study are highest weight vectors (w.r.t. the action of $G$) for the symmetric and alternating algebras, decomposed into highest weight spaces  
\begin{align}\label{eq1}
\mathrm{HWV}_{\pmb{\lambda}}\, \mathrm{Sym}^m(V) = \mathrm{HWV}_{\pmb{\lambda}} \bigvee^m V, 
\qquad \mathrm{HWV}_{\pmb{\lambda}}\, \mathrm{Alt}^m(V) = \mathrm{HWV}_{\pmb{\lambda}} \bigwedge^m V, 
\end{align}
where $\pmb{\lambda}$ is a $d$-tuple of partitions of $m$, denoting the corresponding weights. 
The dimensions of these spaces are given by the (generalized) {\it Kronecker coefficients} $g(\pmb{\lambda})$ and $g(\pmb{\lambda'})$ respectively (for odd $d$), where $\pmb{\lambda'}$ denotes the conjugate weight (for even $d$, the second coefficient has slightly different weights).

\vspace{0.5em}

In this paper, we study a unified explicit construction of spanning sets of highest weight vectors for the spaces \eqref{eq1}, which we denote by $\{\Delta\}$ (polynomials) and $\{\nabla\}$ (forms) indexed by $\mathbb{N}$-hypermatrices (or $d$-dimensional contingency tables). 

\vspace{0.5em}

Such constructions are classical, cf. \cite{processi, bi13, widg, widg1}, 
and highest weight vectors are important in various areas including representation theory, geometry, complexity, and combinatorics. 
The polynomials $\Delta$ presented in \cite{widg, widg1} were significant in algorithms for tensor scaling and the quantum marginal problem. 
We also studied these polynomials (as invariants, for rectangular weights) in \cite{ayfund} in connection with positivity of Kronecker coefficients. 
In \cite{yel}, they were used for an asymptotic version of the Alon--Tarsi conjecture. 
In \cite{iken15}, highest weight polynomials were used to make progress on the Saxl conjecture.
Highest weight vectors of tensors are important in Geometric Complexity Theory (GCT), see e.g. \cite{bi13, bi, bdi}. 
Lower bounds in GCT can be proved using highest weight vectors, yet their computation is NP-hard \cite{bdi}. 
Moreover, highest weight vectors were used 
in \cite{imw17} to show that deciding positivity of Kronecker coefficients is NP-hard. 


In the special case of rectangular weights, highest weight polynomials become invariants for the action of the group $\mathrm{SL}^{\times d}$. Cayley in his pioneering work \cite{cay2} referred to such invariants as {\it hyperdeterminants}, higher dimensional generalizations of the ordinary determinant. These polynomials are important in algebraic geometry, for instance, vanishing properties of the {geometric hyperdeterminant} studied in \cite{gkz} relate to the generalized notion of degeneracy for tensors (on the other hand, the form of such hyperdeterminant is not explicit in general and its degree grows exponentially). Hyperdeterminants are also useful to study entanglement of multipartite states in quantum physics, see e.g. \cite{miyake}. 

\vspace{0.5em}

One of the main problems that we resolve in this paper, refers to description 
of all algebraic relations among these 
highest weight vectors. As we show, to solve this problem it is important to study both symmetric and alternating spaces and dual highest weight vectors. The explicit forms $\nabla$ which we construct, provide a complementary set of highest weight vectors for the alternating spaces. With this unified approach 
we obtain some fundamental properties of these polynomials and forms. 
In particular, we describe all linear relations for these spanning sets of highest weight vectors.  
Note that this problem is related to the construction of bases for the spaces \eqref{eq1}, which is also related to combinatorial interpretation of Kronecker coefficients, a major open problem in algebraic combinatorics.



\vspace{0.5em}

Our main results are summarized as follows: 

{\bf 1. Algebraic relations.} 
We describe the algebraic relations among highest weight vectors $\{\Delta \}$ and $\{ \nabla \}$ spanning the spaces \eqref{eq1}. The proof is based on the next two important results. 

{\bf 2. Duality isomorphism of highest weight spaces.} 
With highest weight vectors $\{\Delta \}$ and $\{ \nabla \}$, we construct explicit isomorphisms for dual (equidimensional) highest weight spaces.  

{\bf 3. Duality of highest weight vectors.} 
We discover a natural yet nontrivial duality for expansions of highest weight vectors $\{\Delta \}$ and $\{ \nabla \}$. 
The proof of this property relies on constructing some invariants of certain double cosets of Young subgroups. 
The duality is crucial for proving both of the above results. 
In special cases, this duality also gives conceptual interpretations to power expansions of Cayley's first hyperdeterminant and its dual exterior Cayley form; these expansions are related to evaluations of fundamental SL-invariants of tensors and higher-dimensional analogues of the Alon--Tarsi conjecture on Latin hypercubes, and have important implications to positivity of Kronecker coefficients. 

\vspace{0.5em}

Let us now discuss our results in more detail. 

\subsection*{Spanning sets of highest weight vectors} 
For a $d$-tuple $\pmb{\lambda}$ of partitions of $m$, we denote by $\mathsf{T}_{\mathbb{N}}(\pmb{\lambda})$ and $\mathsf{T}_{01}(\pmb{\lambda})$ the sets of $d$-dimensional $\mathbb{N}$- and $(0,1)$-hypermatrices with marginals $\pmb{\lambda}$. The \textit{support} of the hypermatrix $P \in \mathsf{T}_\mathbb{N}(\pmb{\lambda})$ is denoted by $\mathrm{supp}(P) = \{P(1) \le \ldots \le P(m) \}$ with elements $P(i) \in \mathbb{N}^d$ ordered lexicographically, assuming $\mathrm{supp}(P)$ is finite. We write $\bigvee e_P := \bigvee_{i=1}^m e_{P(i)}$ for the $m$-th symmetric product,\footnote{To stress duality we use a less common notation $\bigvee e_P$ instead of the usual product $\prod e_P$. } 
$\bigwedge e_P := \bigwedge_{i=1}^m e_{P(i)}$ for the $m$-th exterior product, and $e_{\mathbf{i}} = e_{i_1} \otimes \ldots \otimes e_{i_d}$ for $\mathbf{i} = (i_1,\ldots, i_d) \in \mathbb{N}^d$, where $\{e_i \}$ is the standard basis of $\mathbb{C}^n$. Then 
\begin{align}\label{eq:intro-wv}
    \{\bigvee e_{S} : S \in \mathsf{T}_{\mathbb{N}}(\pmb{\lambda}) \}\qquad\text{and}\qquad\{\bigwedge e_{T} : T \in \mathsf{T}_{01}(\pmb{\lambda}) \}
\end{align}
are the standard bases of the weight spaces $\mathrm{WV}_{\pmb{\lambda}} \bigvee V$ and $\mathrm{WV}_{\pmb{\lambda}} \bigwedge V$, respectively. 


\subsubsection*{$\Delta, \nabla$ vectors}
    In \textsection\,\ref{sec:spanningtensor} we explicitly construct the vectors $\{\Delta_T \}$, $\{\nabla_S \}$ indexed by $\mathbb{N}$-hypermatrices with marginals $\pmb{\lambda'}$ and show that they span the corresponding highest weight spaces  \eqref{eq1}. Their presentations are of the form 
    \begin{align*} 
        \Delta_T = \pm \frac{1}{||T||} \sum_{X } \sgn_T(X) \bigvee e_X, 
       \quad 
        \nabla_S = \pm \frac{1}{||S||} \sum_{Y } \sgn_S(Y) \bigwedge e_Y, 
    \end{align*}
where $||\cdot||$ denotes the size of the stabilizer w.r.t. $S_{|\pmb{\lambda}|}$, the sums run over $d$-tuples of words of weight $\pmb{\lambda}$ and the sign functions are defined in Def.~\ref{def:sign}. 
In \textsection\,\ref{sec:spanningset} and \textsection\,\ref{sec:spanningtensor}, we present a unified construction of these vectors by symmetric and alternating projections from a basis of highest weight tensor spaces. 

\vspace{0.5em}

Then for odd $d$ we have  
    \begin{align*}
        \mathrm{HWV}_{\pmb{\lambda}} \bigvee V 
        = \mathrm{span}\{ 
            \Delta_T \mid T \in \mathsf{T}_{01}(\pmb{\lambda'})
        \}, \quad
        \mathrm{HWV}_{\pmb{\lambda}} \bigwedge V 
        = \mathrm{span}\{ 
            \nabla_S \mid S \in \mathsf{T}_\mathbb{N}(\pmb{\lambda'})
        \},
    \end{align*}
    where $T$ and $S$ have conjugate weights.
While for even $d$ we have\footnote{As we will see, there is always slightly different behavior for odd and even $d$ cases.}  
\begin{align*}
    \mathrm{HWV}_{\pmb{\lambda}} \bigvee V 
    = \mathrm{span}\{ 
        \Delta_S \mid S \in \mathsf{T}_\mathbb{N}(\pmb{\lambda'})
    \},\quad
    \mathrm{HWV}_{\pmb{\lambda}} \bigwedge V 
    = \mathrm{span}\{ 
        \nabla_T \mid T \in \mathsf{T}_{01}(\pmb{\lambda'})
    \}.
\end{align*}
Comparing these spanning sets with bases of weight spaces in \eqref{eq:intro-wv}, we already see certain duality of index sets, which is the cornerstone of our paper.





\vspace{0.5em}

Let us note that the general method of constructing highest weight vectors is rather classical, cf. \cite{processi, bi13, widg, widg1}. The polynomials $\Delta_T$ were presented in equivalent explicit form in \cite{widg1} in connection with algorithms for tensor scaling. 
We also studied the polynomials $\Delta_T$ (for rectangular weights) in \cite{ayfund} in connection with positivity of Kronecker coefficients. The explicit forms $\nabla_S$ constructed here, then provide a complementary set of highest weight vectors for the alternating spaces. 

\subsection{Algebraic relations}
Throughout the text, we work with (ordered) versions of hypermatrices, \textit{$d$-line notations or tables}, see \textsection\,\ref{subsec:convention}.
For a table (or a $d$-tuple of words) $X$ the operation $\partial^{(\ell)}_{i j}$ produces the set of {\it boundary} tables of $X$ by replacing single letter $j$ in $\ell$-th row (or direction) with letter $i$, see \textsection\,\ref{sec:linear} for details. If $X$ has weight $\pmb{\lambda}$ then each table in $\partial_{ij}^{(\ell)}(X)$ has the same weight $\alpha^{(\ell)}_{ij}\pmb{\lambda}$.

\vspace{0.5em}

We then describe the algebraic relations for highest weight vectors $\Delta$ and $\nabla$ as follows.

\begin{theorem}[Description of the algebraic relations for $\Delta$ and $\nabla$, cf. Theorem~\ref{th:relations1}]\label{th:relations-intro}
For each $m$ and $d$-tuple $\pmb{\lambda}$ of partitions of $m$, 
highest weight vectors $\Delta$ and $\nabla$ satisfy the following linear relations for all $\ell \in [d], i < j $: 
\begin{align*}
\sum_{T \in \partial^{(\ell)}_{j i}(X)} \Delta_T = 0, \qquad 
\sum_{S \in \partial^{(\ell)}_{j i}(Y)} \nabla_S = 0, \quad 
\end{align*}
s.t. {\small $X \in \mathsf{T}_{01}(\alpha^{(\ell)}_{i j} \pmb{\lambda'} ), Y \in \mathsf{T}_{\mathbb{N}}(\alpha^{(\ell)}_{i j} \pmb{\lambda'} )$}
for odd $d$, or {\small $X \in \mathsf{T}_{\mathbb{N}}(\alpha^{(\ell)}_{i j} \pmb{\lambda'} ), Y \in \mathsf{T}_{01}(\alpha^{(\ell)}_{i j} \pmb{\lambda'} )$} for even $d$. \\
Moreover, any algebraic relation\footnote{Meaning relations of the form $\sum \alpha_T \Delta_{T^{(1)}} \vee\ldots \vee\Delta_{T^{(k)}} = 0$ or $\sum \beta_S \nabla_{S^{(1)}}\wedge\ldots\wedge\nabla_{S^{(k)}} = 0$.} among the polynomials $\Delta$ or the forms $\nabla$ is a consequence of these relations. 

\end{theorem}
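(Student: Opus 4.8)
The plan is to prove the two assertions separately: first that the displayed sums vanish (these are genuine linear relations), and then that they generate all relations. For the first part, I would work directly with the explicit presentation $\Delta_T = \pm\frac{1}{\|T\|}\sum_X \sgn_T(X)\bigvee e_X$ (and the analogous formula for $\nabla_S$) from \textsection\,\ref{sec:spanningtensor}. Fixing $\ell$, $i<j$, and a table $X$ of weight $\alpha^{(\ell)}_{ij}\pmb\lambda'$, summing $\Delta_T$ over $T\in\partial^{(\ell)}_{ji}(X)$ amounts, after expanding, to a sum over pairs $(T,\text{monomial})$; the key is that the raising/lowering action of the relevant $\mathfrak{gl}$-generator in direction $\ell$ — which sends $\mathrm{HWV}$'s of one weight to $\mathrm{HWV}$'s of the adjacent weight — is, up to sign and the normalization $\|\cdot\|$, exactly the operator $\sum_{T\in\partial^{(\ell)}_{ji}(X)}$ applied to the basis of highest weight \emph{tensor} spaces. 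Since $\Delta_T,\nabla_S$ are obtained by the symmetric/alternating projection from that tensor basis, and these projections are $G$-equivariant (so they commute with the raising operators), the relation descends. Concretely, I expect that $\sum_{T\in\partial^{(\ell)}_{ji}(X)}\Delta_T$ equals the image under the symmetrizing projection of an explicit element that is already $0$ (or is killed because a raising operator applied past the top of a column/row produces a repeated index in the alternating case, or a symmetric cancellation in the other parity) — this is where the odd/even dichotomy enters.

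For the converse — that every linear relation is a consequence of these — I would argue by a dimension count. The space of \emph{formal} linear combinations of the index set ($\mathsf T_{01}(\pmb\lambda')$ or $\mathsf T_{\mathbb N}(\pmb\lambda')$, depending on parity) modulo the subspace $R$ generated by the displayed relations surjects onto $\mathrm{HWV}_{\pmb\lambda}\bigvee V$ (resp. $\bigwedge V$), whose dimension is the Kronecker coefficient $g(\pmb\lambda)$ (resp. $g(\pmb\lambda')$ or the shifted version). So it suffices to show that the quotient by $R$ has dimension \emph{at most} $g(\cdot)$. Here is where the duality results of the paper do the work: the duality isomorphism of highest weight spaces (main result 2) and the duality of highest weight vectors (main result 3) identify the $\Delta$-span in the symmetric setting with the $\nabla$-span in the alternating setting for the conjugate data, and under this identification the relation modules $\partial^{(\ell)}_{ji}$ correspond. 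So I would transport the problem to whichever of the two sides is combinatorially cleaner — presumably the $(0,1)$-side, where $\mathsf T_{01}$ is finite and the boundary operators $\partial^{(\ell)}_{ij}$ have a transparent "straightening" behavior — and exhibit an explicit spanning set of the quotient $\left(\bigoplus_{T}\mathbb C\right)/R$ of the right size, e.g. a set of "standard" or "lex-minimal" tables of cardinality $g(\cdot)$, by showing every table can be reduced to a combination of standard ones modulo $R$ (a Gröbner/straightening-type argument).

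The main obstacle, as I see it, is precisely this last straightening step: proving that the relations $\sum_{T\in\partial^{(\ell)}_{ji}(X)}\Delta_T=0$ suffice to rewrite an arbitrary table in terms of a distinguished basis of the correct size $g(\cdot)$. This is the step that genuinely needs the duality, because a priori one only knows the $\Delta$'s \emph{span} the space, not that the naive relations exhaust the kernel of the spanning map; the equidimensional duality isomorphism (result 2) is what lets one pin the quotient dimension down to exactly $g(\cdot)$ rather than merely bounding it above by the size of the index set. I would structure the write-up so that Theorem~\ref{th:relations1} is deduced as a corollary once the duality isomorphism and the combinatorial straightening lemma are in place, with the parity split ($d$ odd vs. even, and the corresponding swap of $\mathsf T_{01}\leftrightarrow\mathsf T_{\mathbb N}$ and of $\Delta\leftrightarrow\nabla$) handled uniformly by the duality rather than by separate case analysis.
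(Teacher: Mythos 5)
Your proposal correctly intuits that the duality results are the engine of the proof and that the parity split is handled uniformly by them, but there are two substantive gaps.

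\textbf{The vanishing part.} Your sketch tries to derive $\sum_{T\in\partial^{(\ell)}_{ji}(X)}\Delta_T=0$ directly from ``$G$-equivariance of the projections,'' but this conflates two different maps. The symmetrizing/alternating projections $\mathrm{Sym},\mathrm{Alt}$ are indeed $G$-equivariant, but the index maps $\Delta\colon\bigwedge e_T\mapsto\Delta_T$ and $\nabla\colon\bigvee e_S\mapsto\nabla_S$ (the ones to which the relation $\sum_{T\in\partial^\ast(X)}\Delta_T=\Delta(\bigwedge\partial^\ast(X))$ refers) are \emph{not} $G$-equivariant, so equivariance gives nothing here. Also note the misstatement that raising/lowering operators ``send HWV's to HWV's of adjacent weight'' --- raising operators annihilate highest weight vectors and lowering operators generally do not produce them. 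The paper's actual mechanism is indirect: it first proves a clean coordinate criterion (Lemma~\ref{lemma:raising1}) that a weight vector $H=\sum h_T\bigcirc e_T$ is a highest weight vector iff $\sum_{T\in\partial^\ast(X)}h_T=0$ for all boundary data; it then applies this criterion to the already-known highest weight vectors $\nabla_S$ living in the \emph{conjugate-weight} alternating space, getting $\sum_{T\in\partial^\ast(X)}b(S,T)=0$ for all $S$, and only then invokes the duality identity $a(T,S)=\pm b(S,T)$ from Theorem~\ref{th:duality2} to rewrite this as $\sum_{T\in\partial^\ast(X)}\langle\Delta_T,\bigvee e_S\rangle=0$ for all $S$, i.e.\ $\sum_T\Delta_T=0$. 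Without this transfer via the $a$--$b$ duality, the relation does not follow from the stated ingredients.

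\textbf{The converse.} You correctly diagnose that the duality isomorphism (Theorem~\ref{th:isomorphism}) is the crucial input for pinning down the kernel, but the straightening/Gr\"obner step you propose is both a genuine extra project and unnecessary. The paper avoids it entirely: Lemma~\ref{lemma:raising1} already gives a direct sum decomposition of the weight space $B_{\pmb{\lambda'}}=HB^{\perp}_{\pmb{\lambda'}}\oplus HB_{\pmb{\lambda'}}$, where $HB^\perp$ is spanned by the boundary vectors $\bigwedge\partial^\ast(X)$ and $HB$ is the highest weight space. The vanishing relations give $HB^\perp\subseteq\ker\Delta$, and the isomorphism gives $\ker\Delta\cap HB=0$, whence $\ker\Delta=HB^\perp$ immediately by the decomposition --- no distinguished basis of standard tables, no reduction algorithm, no separate dimension count needed. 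If you pursue your route, the straightening argument would be doing work that the HWV-characterization lemma already packages for free, and proving that such a straightening terminates with the right residual set would be harder than the theorem it is meant to establish.
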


The proof of this theorem is based on duality theorems~\ref{th:intro2} and \ref{th:intro3} presented below. 
Let us note an important subtlety here: Theorem~\ref{th:relations-intro}  establishes defining relations for concrete spanning sets of highest weight vectors via dual spaces; e.g. for odd $d$, the defining relations of $\pmb{\lambda}$-weight $\Delta$-polynomials are derived via the defining identities\footnote{ Each highest weight vector $P$ 
admits relations on its expansion coefficients coming from defining identities $U \cdot P = P$, where $U$ is the corresponding upper unitriangular part of $\mathrm{GL}^{\times d}$. } 
on coefficients of dual $\pmb{\lambda'}$-weight $\nabla$-forms, and vice versa.  

\begin{remark}
For fixed $m$ and $d$-tuple $\pmb{\lambda}$ of partitions of $m$, the presented finite list of relations gives all linear relations for the corresponding highest weight spaces \eqref{eq1}. 
\end{remark}

\begin{remark}
Even though these relations have linear form, many of them can also be viewed as polynomial relations w.r.t. corresponding multiplication, see \textsection\,\ref{sec:linear}. In a sense, such relations can be regarded as higher analogues of straightening relations. 
\end{remark}




Let us also illustrate a small example for this thoerem.

\begin{example}
\label{ex:2x2x2}
    Let $d = 3, n = 2$ so that $V = \mathbb{C}^2 \otimes \mathbb{C}^2 \otimes \mathbb{C}^2$, and consider the group $H = \mathrm{SL}(2, \mathbb{C})^{\times 3}$ acting on $V$.  
    Up to slice symmetries, there are four distinct $(0,1)$-hypermatrices $A,B,C,X \in \mathsf{T}_{01}((22),(22),(22))$ with marginals $(2,2)$, see Figure~\ref{fig:cubes}. We then have $$\mathbb{C}[V]^H_4 = \mathrm{span}\{\Delta_A, \Delta_B, \Delta_C, \Delta_X\}$$ 
    for the space of $H$-invariant polynomials of degree $4$.  
    The relations from Theorem~\ref{th:relations-intro} can be written as follows:
    \begin{align*}
        \Delta_A - \Delta_B \pm \Delta_X = 0,\\
        \Delta_B - \Delta_C \pm \Delta_X = 0,\\
        \Delta_C - \Delta_A \pm \Delta_X = 0,
    \end{align*}
    which implies that $\Delta_X = 0$ and $\Delta_A = \Delta_B = \Delta_C$ is unique invariant. This polynomial is known as Cayley's hypedeterminant, 
    which in fact generates the entire ring of invariants $\mathbb{C}[V]^H$ \cite{bbs12}.
\end{example}

\begin{remark}
    More generally, one can study the ring $\mathbb{C}[V]^{H}$ of $H=\mathrm{SL}(2)^{\times d}$ polynomial invariants of the space $V=(\mathbb{C}^{2})^{\otimes d}$ (also called the space of $d$ qubits) as the module of $(0,1)$-hypermatrices $\bigoplus_k \mathbb{C}\mathsf{T}_{01}(k \times 2,\ldots,k \times 2)$ modulo such simple relations. 
\end{remark}

\begin{figure}
    \begin{center}
        \centering
        \picturecubes
    \end{center}
    \caption{The hypermatrices $A,B,C$ are rotationally equivalent to the hypermatrix on the left, and $X$ represents the hypermatrix on the right. (Here the black dots correspond to $1$'s and the white dots to $0$'s.) }
    \label{fig:cubes}
\end{figure}

\subsection{Duality isomorphism of highest weight spaces}
We prove that the canonical linear maps defined on highest weight vectors $\Delta$ and $\nabla$ give explicit isomorphisms for corresponding dual highest weight spaces. 



\begin{theorem}[Duality isomorphism of highest weight spaces, cf. Theorem~\ref{th:isomorphism}]\label{th:intro2}
Let $\pmb{\lambda}$ be a $d$-tuple of partitions of $m$.

(i) For odd $d$ the  linear maps 
\begin{align*}
\Delta &: \mathrm{HWV}_{\pmb{\lambda'}} \bigwedge V \longrightarrow \mathrm{HWV}_{\pmb{\lambda}} \bigvee V, \quad \Delta : \bigwedge e_T \longmapsto \Delta_T, \quad T \in \mathsf{T}_{01}(\pmb{\lambda'}), \\
\nabla &: \mathrm{HWV}_{\pmb{\lambda'}} \bigvee V \longrightarrow \mathrm{HWV}_{\pmb{\lambda}} \bigwedge V, \quad \nabla : \bigvee e_S \longmapsto \nabla_S, \quad S \in \mathsf{T}_{\mathbb{N}}(\pmb{\lambda'}),
\end{align*}
are isomorphisms of highest weight spaces.

(ii) For even $d$ the linear maps 
\begin{align*}
\Delta &: \mathrm{HWV}_{\pmb{\lambda'}} \bigvee V \longrightarrow \mathrm{HWV}_{\pmb{\lambda}} \bigvee V, \quad \Delta : \bigvee e_S \longmapsto \Delta_S, \quad S \in \mathsf{T}_{\mathbb{N}}(\pmb{\lambda'}), \\
\nabla &: \mathrm{HWV}_{\pmb{\lambda'}} \bigwedge V \longrightarrow \mathrm{HWV}_{\pmb{\lambda}} \bigwedge V, \quad \nabla : \bigwedge e_T \longmapsto \nabla_T, \quad T \in \mathsf{T}_{01}(\pmb{\lambda'}),
\end{align*}
are isomorphisms of highest weight spaces.
\end{theorem}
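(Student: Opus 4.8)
The plan is to prove that the maps $\Delta$ and $\nabla$ are well-defined isomorphisms, and the natural strategy is: (1) show the maps are well-defined, i.e. that they kill the linear relations in the source space; (2) show they are surjective; (3) conclude injectivity by an equidimensionality argument.

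First I would address well-definedness. Both source spaces $\mathrm{HWV}_{\pmb{\lambda'}}\bigwedge V$ and $\mathrm{HWV}_{\pmb{\lambda'}}\bigvee V$ (in case (i), mutatis mutandis for (ii)) are spanned by the standard weight-space bases $\{\bigwedge e_T\}$, $\{\bigvee e_S\}$ from \eqref{eq:intro-wv}, intersected with the highest-weight condition; so a relation in the source is exactly a linear relation among those standard vectors that lies in the highest weight subspace. I would argue that these are precisely (the images under the standard embeddings of) the relations generated by the boundary operators $\partial^{(\ell)}_{ij}$ — this is the content of the linear-relations theorem applied on the ``primal'' side — and then observe that the $\Delta$/$\nabla$ images of exactly the same boundary combinations vanish, which is precisely the statement of the first theorem (Characterization of linear relations for $\Delta$ and $\nabla$). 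In other words, the linear-relations theorem has been set up so that the relation ideal on the weight-space side and on the highest-weight-vector side are literally indexed by the same data $(\ell, i<j, X)$, and this matching is exactly what makes $\Delta$ and $\nabla$ descend to well-defined linear maps.

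Next, surjectivity is immediate: by the spanning statements recalled just before this theorem, $\mathrm{HWV}_{\pmb{\lambda}}\bigvee V = \mathrm{span}\{\Delta_T\}$ and $\mathrm{HWV}_{\pmb{\lambda}}\bigwedge V = \mathrm{span}\{\nabla_S\}$ over the appropriate index sets, and these generators are exactly the images of the basis elements of the source space. Finally, for injectivity I would invoke equidimensionality: the dimensions of $\mathrm{HWV}_{\pmb{\lambda}}\bigvee V$ and $\mathrm{HWV}_{\pmb{\lambda}}\bigwedge V$ are the Kronecker coefficients $g(\pmb{\lambda})$, $g(\pmb{\lambda'})$ (resp. the appropriately modified weights in the even case), as noted in the introduction; the symmetry $g(\pmb{\lambda}) = g(\pmb{\lambda'}\text{-type data})$ together with the fact that $\bigwedge e_T \mapsto$ highest weight vectors sits inside $\mathrm{HWV}_{\pmb{\lambda'}}\bigwedge V$, whose dimension is again a Kronecker coefficient, forces source and target to have equal dimension. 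A surjective linear map between finite-dimensional spaces of equal dimension is an isomorphism. One should double-check the even/odd bookkeeping here since the conjugation enters the weights differently (for even $d$ the second coefficient has ``slightly different weights''), but this is purely a matter of matching the Kronecker-coefficient symmetries correctly.

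The main obstacle I expect is step (1): verifying that \emph{every} linear relation in the source highest weight space is a consequence of the $\partial^{(\ell)}_{ij}$ relations on the weight-space side. This is really the ``only if'' half of the linear-relations theorem transported to the primal side, and it is plausible that this is where the genuine work — presumably the duality of highest weight vectors (main result 3) and the invariants of double cosets of Young subgroups — gets used: one needs that the span of the boundary relations, as a subspace of $\mathrm{WV}_{\pmb{\lambda'}}\bigwedge V$, has codimension exactly $\dim \mathrm{HWV}_{\pmb{\lambda'}}\bigwedge V = g$, with no further relations hiding. If instead one takes the linear-relations theorem as a black box stating completeness on \emph{both} the $\{\Delta\}$/$\{\nabla\}$ side and (by the same combinatorics) the standard-basis side, then the proof reduces to the bookkeeping above; I would structure the writeup to isolate exactly which completeness statement is being borrowed, and flag the even-$d$ weight shift as the one place where the index-set dictionary must be stated explicitly rather than by analogy.
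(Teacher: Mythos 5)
Your plan has a circularity problem that is worth understanding precisely, because it illuminates how the paper actually organizes these results.

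You propose to establish well-definedness by invoking the linear-relations theorem (Theorem~\ref{th:relations1}) as a black box, and you correctly flag that the completeness half of that theorem (``no further relations hiding'') is where the genuine work lives. But in the paper, Theorem~\ref{th:relations1} is proved \emph{after} Theorem~\ref{th:isomorphism}, and its proof of the kernel characterization explicitly invokes Theorem~\ref{th:isomorphism} to get $\ker(\Delta)\cap \mathrm{HWV}_{\pmb{\lambda'}}\bigwedge V = 0$. So borrowing Theorem~\ref{th:relations1} to prove Theorem~\ref{th:isomorphism} reverses the paper's dependency chain; you would need an independent proof of the kernel characterization, which you do not sketch. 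Relatedly, the well-definedness framing is a bit off: the maps $\Delta,\nabla$ are defined by specifying images of the standard weight-space basis $\{\bigwedge e_T\}$, $\{\bigvee e_S\}$ (the whole $\mathrm{WV}$, not just $\mathrm{HWV}$), and then \emph{restricted} to the HWV subspace. Restriction of a linear map to a subspace is always well-defined; the genuine question is whether the restriction is injective.

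The paper's actual proof is cleaner and avoids any kernel or dimension computation. It records the transition matrix of the extended map $\Delta:\mathrm{WV}_{\pmb{\lambda'}}\bigwedge V \to \mathrm{WV}_{\pmb{\lambda}}\bigvee V$ in standard bases, namely $(a(T,S))$ where $a(T,S) = \langle\Delta_T,\bigvee e_S\rangle$. Its column space is $\mathrm{span}\{\Delta_T\} = \mathrm{HWV}_{\pmb{\lambda}}\bigvee V$ by the spanning theorem. The key point, which is exactly where Theorem~\ref{th:duality2} enters, is that the rows $(a(T,S))_T$ are, up to a global sign, the coefficients $(b(S,T))_T$ of $\nabla_S$; hence the row space equals $\mathrm{span}\{\nabla_S\} = \mathrm{HWV}_{\pmb{\lambda'}}\bigwedge V$. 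Any matrix restricts to an isomorphism from its row space onto its column space, so the restriction of $\Delta$ to $\mathrm{HWV}_{\pmb{\lambda'}}\bigwedge V$ is an isomorphism onto $\mathrm{HWV}_{\pmb{\lambda}}\bigvee V$. No equidimensionality fact about Kronecker coefficients and no characterization of $\ker(\Delta)$ are needed. You were right that the duality of highest weight vectors is the engine, but the mechanism is the identification of row and column spaces of a single matrix, not a completeness-of-relations argument.
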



\subsection{Duality of highest weight vectors}
The key instrumental result used in the proofs of the above results as well as in the applications below, is the following duality for highest weight vectors $\Delta$ and $\nabla$. 

\begin{theorem}[Duality for $\Delta, \nabla$, cf. Theorem~\ref{th:duality2}]\label{th:intro3}
        Let $\pmb{\lambda}$ be a $d$-tuple of partitions of $m$.  
    
    (i) For odd $d$ we have 
    \begin{align*}
        \Delta_{T} &= \pm 
        \sum_{S \in \mathsf{T}_{\mathbb{N}}(\pmb{\lambda})}~
            \langle\nabla_S, \bigwedge e_T\rangle \bigvee e_S, \quad T \in  \mathsf{T}_{01}(\pmb{\lambda'}), 
            \\
        \nabla_{S} &= \pm 
        \sum_{T \in  \mathsf{T}_{01}(\pmb{\lambda})}
            \langle\Delta_T, \bigvee e_S\rangle \bigwedge e_T, \quad S \in \mathsf{T}_{\mathbb{N}}(\pmb{\lambda'}). 
    \end{align*}
    
    (ii) For even $d$  we have 
    \begin{align*}
        \Delta_{S} &=\pm \sum_{S' \in \mathsf{T}_{\mathbb{N}}(\pmb{\lambda})}~
            \langle
                \Delta_{S'}, 
                \bigvee e_{S}\rangle
            \bigvee e_{S'}, \quad S \in \mathsf{T}_{\mathbb{N}}(\pmb{\lambda'}),
            \\
        \nabla_{T} &=\pm \sum_{T' \in  \mathsf{T}_{01}(\pmb{\lambda})}
            \langle
                \nabla_{T'}, 
                \bigwedge e_T\rangle 
            \bigwedge e_{T'}, \quad T \in \mathsf{T}_{01}(\pmb{\lambda'}). 
    \end{align*}

\end{theorem}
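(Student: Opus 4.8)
The plan is to deduce the duality identities from the explicit presentations
$$
\Delta_T = \pm \frac{1}{\|T\|}\sum_X \sgn_T(X)\bigvee e_X,
\qquad
\nabla_S = \pm\frac{1}{\|S\|}\sum_Y \sgn_S(Y)\bigwedge e_Y,
$$
by computing the pairings $\langle \nabla_S,\bigwedge e_T\rangle$ and $\langle \Delta_T, \bigvee e_S\rangle$ directly and recognizing the resulting coefficients. The first step is to expand the right-hand side of, say, the odd-$d$ identity $\Delta_T = \pm\sum_{S}\langle\nabla_S,\bigwedge e_T\rangle\bigvee e_S$ in the standard basis $\{\bigvee e_S : S\in\mathsf{T}_{\mathbb N}(\pmb\lambda)\}$. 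Using that $\langle\nabla_S,\bigwedge e_T\rangle$ equals (up to the normalizing sign and $1/\|S\|$ factor) the signed count of $d$-tuples of words $Y$ with $\sgn_S(Y)\ne 0$ whose associated $(0,1)$-hypermatrix is $T$, the coefficient of $\bigvee e_S$ on the right becomes a double sum over pairs of words indexed by $S$-data and $T$-data. On the left, the coefficient of $\bigvee e_S$ in $\Delta_T$ is, by its presentation, a signed count of $d$-tuples $X$ of weight $\pmb\lambda$ with a prescribed symmetrization class. So the whole statement reduces to a combinatorial identity equating two signed enumerations of $d$-tuples of words, organized differently.

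The key device for proving that combinatorial identity is the construction of invariants of double cosets of Young subgroups announced in the introduction; I would set this up as a separate lemma before the theorem. Concretely: a $d$-tuple of words of weight $\pmb\lambda$ that also "refines" to a fixed table $T$ of weight $\pmb\lambda'$ is the same as an element of a double coset $S_{\pmb\lambda}\backslash S_m / S_{\pmb\lambda'}$ decorated with sign data, and the two sides of the proposed identity correspond to summing the sign over the coset in two different orders (first over the $S_{\pmb\lambda}$-orbit, then over $S_{\pmb\lambda'}$, versus the reverse). The content is that this sign function is genuinely a function on the double coset — i.e. it is bi-invariant — which is exactly where the sign conventions in Def.~\ref{def:sign} and the normalization by stabilizer sizes $\|T\|$, $\|S\|$ must be used carefully; the $1/\|T\|$ and $1/\|S\|$ factors are precisely the orbit-counting corrections that make Burnside-type double counting come out cleanly. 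Once bi-invariance is established, Fubini over the double coset gives the identity, and hence part (i). Part (ii) for even $d$ is the same argument, except that the relevant pairing is $\Delta$-against-$\Delta$ (resp.\ $\nabla$-against-$\nabla$) rather than $\Delta$-against-$\nabla$; this is forced by the parity behavior already visible in the spanning sets (for even $d$ the roles of symmetric and alternating projections relative to conjugation of weights swap), and concretely it amounts to replacing one of the two sign functions in the double-coset sum by its transpose-compatible counterpart. I would handle (ii) by reducing it to (i) applied in one lower "dimension" or, more honestly, by rerunning the bi-invariance lemma tracking the extra transposition of directions that distinguishes even from odd $d$.

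The main obstacle I anticipate is the bi-invariance of the sign function on double cosets: individual terms $\sgn_T(X)$ are not invariant under the $S_{\pmb\lambda}$- or $S_{\pmb\lambda'}$-actions separately (signs flip), and one must show the signs reorganize so that the sum over each orbit is well defined and the two iterated sums agree. This is a genuine sign-bookkeeping argument rather than a formal manipulation, and it is the step that forces the $\pm$ ambiguity in the statement (the global sign depends on $d$, $m$, and possibly on $\pmb\lambda$ through $n(\pmb\lambda)$-type statistics). A secondary, more routine obstacle is verifying that every $\bigvee e_S$ (resp.\ $\bigwedge e_T$) appearing on the right-hand side actually has $S$ (resp.\ $T$) with marginals $\pmb\lambda$ and not some larger support — this follows because $\Delta_T$ is a highest weight vector of weight $\pmb\lambda$, but it should be recorded explicitly so that the index set of the sum is correct. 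Finally, I would remark that in the special rectangular cases the identity specializes to the Cayley hyperdeterminant / exterior Cayley form power-expansion duality mentioned in the introduction, which serves as a useful consistency check on the signs.
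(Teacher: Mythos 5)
Your proposal follows the paper's strategy: reduce the theorem to identities among the coefficients $a(T,S) = \langle\Delta_T, \bigvee e_S\rangle$ and $b(S,T) = \langle\nabla_S,\bigwedge e_T\rangle$, prove those via an invariant of double cosets of Young subgroups (this is exactly the paper's Lemma~\ref{lemma:gst}(e)--(f) and Lemma~\ref{lemma:ats}), and let the stabilizer-size normalizations play the orbit-counting role you describe. The one place your framing is vaguer than the paper's is the final step: the paper does not need a ``Fubini over the double coset'' but rather the reindexing bijection $w \mapsto w^{-1}$ sending $G(S,T) := \{w : \sgn_S(wT) \neq 0\}$ to $G(T,S)$, which, combined with the bi-invariant sign, is what directly yields the explicit global sign $(-1)^{\pmb{\lambda}}$ and cleanly separates the odd and even $d$ cases (through the factor $((-1)^w)^d$).
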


Let us note that while the part (i) for odd $d$ shows the duality between $\Delta$ and $\nabla$, the part (ii) for even $d$ rather expresses a kind of self-duality and symmetry under conjugation for both $\Delta$ and $\nabla$. 

This duality theorem is obtained by a detailed study of the underlying coefficients appearing in the standard bases expansions of $\Delta$ and $\nabla$:
$$
    a(T, S) := \langle \Delta_T, \bigvee e_S \rangle, \qquad 
    b(S, T) := \langle \nabla_S, \bigwedge e_T \rangle,
$$
for which we prove that $a(T,S) = \pm\, b(S,T)$ for odd $d$, and that $a(T, S) = \pm a(S, T),$ $b(S, T) = \pm b(T, S)$ for even $d$ (where the signs depend only on $\pmb{\lambda}$). 
These coefficients can be expressed as certain signed sums over intersections of double cosets of Young subgroups, see \textsection\,\ref{sec:duality}. In particular, we find certain invariants of double cosets of Young subgroups, which help to establish the needed connection. 

Furthermore, the coefficients $a(T, S)$ have the following important properties: 
(i) They show which highest weight vectors (invariant polynomials or forms) do not vanish, which is in general a hard problem \cite{bdi}. 
(ii) The rank of the matrix $\{a(T,S)\}_{(T,S) \in \mathsf{T}_{01}(\pmb{\lambda}) \times \mathsf{T}_{\mathbb{N}}(\pmb{\lambda})}$ is the Kronecker coefficient $g(\pmb{\lambda})$.
(iii) As special cases, $a(T, S)$ also refine: the dimensions $f^{\lambda} = \dim [\lambda]$  of irreducible representations $[\lambda]$ of the symmetric group (which have the hook-length formula), the number of $d$-dimensional Latin hypercubes $|\mathsf{L}_d(k)|$ (which is a high-dimensional generalization of Latin squares), and signed sums over Latin hypercubes called the \textit{$d$-dimensional Alon--Tarsi numbers} $\mathrm{AT}_d(k)$ which correspond to evaluations of certain fundamental invariants at unit tensor.


\subsubsection*{Fundamental invariants and Latin hypercubes}
Regarded as polynomials on the space of tensors $V = (\mathbb{C}^{n})^{\otimes d}$, the vectors $\Delta_T$ of  weight $\pmb{\lambda} = (n \times k)^d$ (where $n \times k := (k, \ldots, k) = (k^n)$) are also $\mathrm{SL}(n)^{\times d}$-invariant. Their evaluations at unit tensors $\mathsf{I}_n := \sum_{i = 1}^n e_{i,\ldots, i} = \sum_{i = 1}^n e_{i} \otimes \cdots \otimes e_i$  are particularly important. Combinatorially, the values $\Delta_T(\mathsf{I}_n)$ can be expressed as certain signed sums over {\it partial} Latin hypercubes \cite{ayfund}. Latin hypercubes are higher-dimensional generalizations of Latin squares which are matrices whose every row and column is a permutation, see \textsection\,\ref{sec:latin} for definitions.  

For $n = k^{d-1}$ and odd $d \ge 3$ there is a unique (up to scalar) {\it fundamental invariant} $\Delta_{d,k} \in \mathbb{C}[V]^{\mathrm{SL}(n)^{\times d}}_{nk}$ of degree $k^{d}$, introduced by  B\"urgisser and Ikenmeyer \cite{bi} (for $d = 3$) and studied in \cite{ayfund} (for general $d$); see also \cite{LZX21} for related work. The polynomial $\Delta_{d,k}$ can be defined as $\Delta_T$ for the hypermatrix $T$ with $\mathrm{supp}(T) = [k]^d$.  Its evaluation at the unit tensor $\Delta_{d,k}(\mathsf{I}_n) =: \mathrm{AT}_d(k)$ can be expressed as a signed sum over Latin hypercubes, which we call the {$d$-dimensional Alon--Tarsi number}. The problem whether $\mathrm{AT}_3(k) \ne 0$ for even $k$ was posed by B\"urgisser--Ikenmeyer in \cite{bi}, which is a $3$-dimensional analogue of the celebrated Alon--Tarsi conjecture stating that $\mathrm{AT}_2(k) \ne 0$ or that the number of $k \times k$ Latin squares 
of positive and negative signs are not equal, see \cite{at, hr}. 



\subsection{The Cayley form and first hyperdeterminant} 
For odd $d$ we define the \textit{Cayley form} $\omega = \omega_{d,k} \in \bigwedge^k (\mathbb{C}^k)^{\otimes d}$ as follows:
$$
    \omega := \frac{1}{k!} \sum_{\pi_1,\ldots, \pi_d \in S_k} \sgn(\pi_1)\cdots\sgn(\pi_d) \bigwedge_{i=1}^k e_{\pi_1(i),\ldots,\pi_d(i)}. 
$$
Similarly, for even $d$ we define {\it Cayley's first hyperdeterminant} $\delta = \delta_{d,k} \in \bigvee^k (\mathbb{C}^k)^{\otimes d}$ as follows: 
$$
\delta := \frac{1}{k!} \sum_{\pi_1,\ldots, \pi_d \in S_k} \sgn(\pi_1)\cdots\sgn(\pi_d) \bigvee_{i=1}^k e_{\pi_1(i),\ldots,\pi_d(i)}.
$$
(Note that for even (resp. odd) $d$ the corresponding r.h.s. for $\omega$ (resp. $\delta$) vanish.)


\vspace{0.5em}

It can be shown that $\omega \in \mathrm{HWV}_{(k \times 1)^d}\bigwedge^k (\mathbb{C}^k)^{\otimes d}$ is unique (up scalar) highest weight vector in that space for odd $d$ \cite{ayfund}. Moreover, both functions are unique SL-invariants of degree $k$. As a polynomial on tensors, this smallest degree invariant $\delta \in \mathbb{C}[(\mathbb{C}^k)^{\otimes d}]^{\mathrm{SL}(n)^{\times d}}_{k}$ was introduced by Cayley \cite{cay, cay2}, and the form $\omega$ can be viewed as its dual.   

\vspace{0.5em}

The duality for $\Delta$ and $\nabla$ gives conceptual interpretation to power expansions of the Cayley form and first hyperdeterminant. We denote $\omega^n = \omega \wedge \cdots \wedge \omega$ ($n$ times) and $\delta^n = \delta \vee \cdots \vee \delta$ ($n$ times). 

\vspace{0.5em}

\begin{corollary}[Power expansions of the Cayley form and first hyperdeterminant]
~ 

(i) For odd $d$ we have 
$$
\omega^n = \sum_{T \in \mathsf{T}_{01}((k \times n)^d)} \Delta_T(\mathsf{I}_n) \bigwedge e_T, \quad n \le k^{d-1}.
$$
In particular, for $n = k^{d-1}$ we have 
$
\omega^{k^{d-1}} = \pm \mathrm{AT}_d(k) \bigwedge e_{[k]^d}.
$

(ii) For even $d$ we have 
    $$
        \delta^n = \sum_{S \in \mathsf{T}_{\mathbb{N}}((k \times n)^d)} \Delta_{S}(\mathsf{I}_n) \bigvee 
        e_S, \quad n \ge 1. \quad
    $$
    In particular, for $n = k^{d-1}$ we have 
    $
    \langle \delta^{k^{d-1}}, \bigvee e_{[k]^d} \rangle = \pm \mathrm{AT}_d(k),
    $
    i.e. the coefficient of $\delta^{k^{d-1}}$ 
    at $\bigvee e_{[k]^d}$ is equal to $\pm \mathrm{AT}_d(k)$.

\end{corollary}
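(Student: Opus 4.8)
The plan is to derive the Corollary directly from the duality isomorphism and the duality for $\Delta, \nabla$ (Theorems~\ref{th:isomorphism} and \ref{th:duality2}), specialized to the rectangular weight $\pmb{\lambda} = (k \times n)^d$ whose conjugate is $\pmb{\lambda'} = (n \times k)^d$. The starting observation is that $\omega_{d,k}$ (resp.\ $\delta_{d,k}$) is, up to scalar, the highest weight vector $\nabla_S$ (resp.\ $\Delta_S$) attached to the unique hypermatrix supported on $[k]^d$ with all entries $1$, i.e.\ the hypermatrix with marginals $(k \times 1)^d$; this is exactly the identification recalled in the text just before the Corollary (and proved in \cite{ayfund}). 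Wedging $\omega$ with itself $n$ times (resp.\ taking the $n$-th symmetric power of $\delta$) produces an element of $\bigwedge^{nk}(\mathbb C^k)^{\otimes d}$ (resp.\ $\bigvee^{nk}$) of weight $(k\times n)^d$, so it lies in the highest weight space $\mathrm{HWV}_{(k\times n)^d}\bigwedge V$ (resp.\ $\mathrm{HWV}_{(k\times n)^d}\bigvee V$). First I would show that this power is, up to an explicit sign and scalar, the image under the duality map $\nabla$ (resp.\ $\Delta$) of the standard basis vector $\bigwedge e_T$ with $\mathrm{supp}(T)=[k]^n$ stacked $n$ times (resp.\ $\bigvee e_S$), i.e.\ of the basis vector of $\mathrm{HWV}_{(k\times n)^d}\bigwedge V$ indexed by the hypermatrix with marginals $(n\times k)^d$ that is the $n$-fold ``thickening'' of the all-ones table on $[k]^d$.

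Granting that, the expansion coefficients are read off from Theorem~\ref{th:duality2}. For odd $d$, the duality says $\nabla_{S}=\pm\sum_{T}\langle\Delta_T,\bigvee e_S\rangle\bigwedge e_T$; but by definition $\langle\Delta_T,\bigvee e_S\rangle=a(T,S)=\Delta_T(\mathsf I_n)$ precisely when $S$ is the unit-tensor hypermatrix $\mathsf I_n$ (expanding $\Delta_T$ and evaluating at $\mathsf I_n=\sum_i e_{i,\dots,i}$ picks out exactly the monomials $\bigvee e_S$ with $S$ the diagonal table, giving the coefficient), so the power $\omega^n$ expands with coefficients $\Delta_T(\mathsf I_n)$ over $T\in\mathsf T_{01}((k\times n)^d)$. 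Here the bound $n\le k^{d-1}$ enters because for larger $n$ the relevant $(0,1)$-hypermatrices with marginals $(k\times n)^d$ cease to exist (equivalently $\bigwedge^{nk}(\mathbb C^k)^{\otimes d}$ no longer contains that weight), and the statement of Theorem~\ref{th:duality2}(i) is exactly at that range. For even $d$, Theorem~\ref{th:duality2}(ii) gives the self-dual expansion $\Delta_{S}=\pm\sum_{S'}\langle\Delta_{S'},\bigvee e_S\rangle\bigvee e_{S'}$, and the same evaluation-at-$\mathsf I_n$ computation identifies $\langle\Delta_{S'},\bigvee e_{\mathsf I_n}\rangle=\Delta_{S'}(\mathsf I_n)$, yielding $\delta^n=\sum_{S}\Delta_S(\mathsf I_n)\bigvee e_S$ over $S\in\mathsf T_{\mathbb N}((k\times n)^d)$ with no upper restriction on $n$ (consistent with $\bigvee^{nk}$ always containing that weight). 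The special case $n=k^{d-1}$ then follows because $\mathsf T_{01}((k\times k^{d-1})^d)$ (resp.\ the relevant symmetric-power weight space for $\bigvee$) is one-dimensional, spanned by $\bigwedge e_{[k]^d}$ (resp.\ $\bigvee e_{[k]^d}$), so the whole power collapses to a single term whose coefficient is $\Delta_{d,k}(\mathsf I_n)=\mathrm{AT}_d(k)$ up to sign, as recorded above.

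The main obstacle I anticipate is the first step: pinning down that $\omega^n$ (resp.\ $\delta^n$) really equals the image of the correct standard basis vector under the duality isomorphism $\nabla$ (resp.\ $\Delta$), with the right normalization. Concretely one must (a) check that the $n$-fold power does not vanish in the stated range and lies in the one-dimensional-spanned-by-image subspace, and (b) track the combinatorial factors $1/\|S\|$ and the signs $\sgn_S(\cdot)$ from the definitions of $\Delta,\nabla$ against the factor $1/k!$ in the definition of $\omega,\delta$ and the multinomial factors produced by expanding an $n$-th power of a sum. I would handle (a) by noting $\omega$ is a highest weight vector, so $\omega^n$ is again a highest weight vector (the highest weight vectors form a subalgebra under $\vee$ and $\wedge$), hence lies in $\mathrm{HWV}_{(k\times n)^d}$, and then invoke Theorem~\ref{th:isomorphism} to write it uniquely as $\nabla_{S}$ (resp.\ $\Delta_S$) for a definite $S$ once its nonvanishing is established; nonvanishing in turn reduces to exhibiting one nonzero coefficient, e.g.\ via the evaluation at $\mathsf I_n$ together with existence of at least one partial Latin hypercube of the relevant shape. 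For (b), the cleanest route is to avoid computing the constants head-on and instead fix both sides' normalization by pairing against a single convenient standard basis covector and using Theorem~\ref{th:duality2} to match coefficients, absorbing all ambiguity into the ``$\pm$'' already present in the statement. This keeps the argument to bookkeeping rather than genuinely new input, since all the representation-theoretic content has been front-loaded into the cited theorems.
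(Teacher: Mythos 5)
Your proposal is correct and follows essentially the same route as the paper: identify $\omega^n = \nabla_{I_{n,k}}$ (resp.\ $\delta^n = \Delta_{I_{n,k}}$), apply Theorem~\ref{th:duality2} to expand in the standard basis, and read the coefficients as evaluations at $\mathsf{I}_n$ since the diagonal lex-table of weight $(n\times k)^d$ is exactly $I_{n,k}$. The one place you create work for yourself unnecessarily is the identity $\omega^n = \nabla_{I_{n,k}}$: you propose to establish it indirectly by arguing $\omega^n$ is a highest weight vector and then fixing the scalar by pairing against a convenient covector, but the paper gets this in one line from the ring product rule (Proposition~\ref{prop:ring}), $\nabla_P\wedge\nabla_Q=\nabla_{P\oplus Q}$, applied to $P=Q=I_{1,k}$ and iterated — this makes all your bookkeeping concerns in item (b) evaporate, since the $1/\|\cdot\|$ factors and signs are already built into the definition of $\nabla$ in a way that is compatible with $\oplus$. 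Everything else in your proposal matches: the interpretation $a(T,I_{n,k})=\Delta_T(\mathsf{I}_n)$, the role of $(-1)^{\pmb{\lambda}}$ collapsing to $1$ for even $d$, and the collapse to a single term for $n=k^{d-1}$ because $\mathsf{T}_{01}((k\times k^{d-1})^d)$ is a singleton.
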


Let us note that the part (i) of this result about expansions of $\omega$ was proved by us in \cite{ayfund} using rather long  combinatorial argument about signs of Latin hypercubes. Here we give a conceptual explanation to this expansion. Moreover, this expansion is important for showing positivity of Kronecker coefficients. Namely, we proved in \cite{ayfund} that for odd $d \ge 3$ the condition $\mathrm{AT}_d(k) \ne 0$ (which conjecturally holds for all even $k$) 
implies the positivity of rectangular Kronecker coefficients $g(n \times k, \ldots, n \times k) > 0$ for all $n \le k^{d-1}$. Moreover, as we studied in \cite{ayuni}, the form $\omega$ plays an important role for understanding  unimodality properties of Kronecker coefficients.



\section{Preliminaries}
We denote $[n] := \{ 1, \ldots, n\}$ and $\{e_{i}\}_{i=1}^{n}$ is the standard basis of $\mathbb{C}^{n}$. For a composition $\alpha = (\alpha_1, \ldots, \alpha_{\ell})$ we denote $|\alpha| = \sum_{i} \alpha_i$ and $\ell(\alpha) = \ell$ its length. We denote the standard basis of the tensor space $(\mathbb{C}^n)^{\otimes d}$ by $e_{\mathbf{i}} = e_{i_1}\otimes\ldots\otimes e_{i_d}$ for $\mathbf{i} = (i_1,\ldots,i_d) \in [n]^d$.

\subsection{Partitions} A {\it partition} is a sequence $\lambda = (\lambda_{1}, \ldots, \lambda_{\ell})$ of positive integers $\lambda_1 \ge \cdots \ge \lambda_{\ell}$, where $\ell(\lambda) = \ell$ is its {\it length}. The {\it size} of $\lambda$ is $|\lambda| = \lambda_1 + \ldots + \lambda_{\ell}$. Every partition $\lambda$ can be represented as the {\it Young diagram} $D(\lambda):=\{(i,j)  : i \in [1, \ell], j\in [1,\lambda_{i}]\}$. We denote by $\lambda'$ the {\it conjugate} partition of $\lambda$ whose diagram is transposed. For a partition $\mu = (\mu_1,\ldots,\mu_p)$ we also define: the partition $\lambda + \mu := (\lambda_1 + \mu_1,\ldots,\lambda_q + \mu_q)$, where $q = \max(\ell, p)$; the sequence $-\lambda := (-\lambda_1,\ldots,-\lambda_\ell)$; and the sequence $\lambda - \mu := \lambda + \mathrm{rev}(-\mu)$, where $\mathrm{rev}(\mu) := (\mu_p,\ldots,\mu_1)$. We extend all operations to $d$-tuples of partitions (sequences) coordinate-wise. 

\subsection{Hypermatrices}

\textit{A $d$-dimensional hypermatrix} is 
an array $H = (H_{i_1,\ldots,i_d})_{i_1, \ldots, i_d \ge 1}$. 
Such hypermatrix displays coordinates of the tensor $H = \sum_{i_1,\ldots,i_d \ge 1} H_{i_1,\ldots,i_d} e_{i_1,\ldots,i_d} \in (\mathbb{C}^n)^{\otimes d}$ and each tensor can be represented as a hypermatrix, once basis is fixed. 

An integer-valued hypermatrix $H$ is called \textit{$\mathbb{N}$-hypermatrix} if it has \textit{finite support} $$\mathrm{supp}(H) = \{ (i_1,\ldots,i_d): H_{i_1,\ldots,i_d} \neq 0 \}$$ and $H_{i_1,\ldots,i_d} \in \mathbb{Z}_{\ge 0}$. If additionally $H_{i_1,\ldots,i_d} \in \{0, 1\}$, then $H$ is called \textit{$(0,1)$-hypermatrix}. 
The \textit{marginals} of the hypermatrix $H$ refer to the $d$-tuple of vectors $\pmb{\lambda} = (\lambda^{(1)},\ldots,\lambda^{(d)})$ with $\lambda^{(\ell)} = (\lambda^{(\ell)}_1, \ldots )$ given by 
$$
\lambda^{(\ell)}_j = \sum_{i_1,\ldots,\widehat{i_\ell},\ldots,i_d} H_{i_1,\ldots,j, \ldots, i_d}
$$
for each $\ell \in [d]$ and $j \ge 1$. In other words, $\lambda^{(\ell)}_j$ is the sum of elements in $j$-th {\it slice} of $\ell$-th direction.
We denote the sets of $\mathbb{N}$- and $(0,1)$-hypermatrices with marginals $\pmb{\lambda} = (\lambda^{(1)},\ldots,\lambda^{(d)})$ by $\mathsf{T}_{\mathbb{N}}(\pmb{\lambda})$ and $\mathsf{T}_{01}(\pmb{\lambda})$, respectively. 

\newcounter{l}
\newcounter{L}
\newcommand{\drawsquare}[4] {
    \foreach \y in {1*\value{l}+#2,2*\value{l}+#2, 3*\value{l}+#2} {
        \foreach \z in {1*\value{l}+#3, 2*\value{l}+#3, 3*\value{l}+#3} {
            \fillcell{#1}{\y}{\z}{#4};
        }
    }
}
\newcommand{\fillcell}[4] {
    \draw[#4] (#1,#2,#3) -- (#1,#2+\value{l},#3) -- (#1,#2+\value{l},#3+\value{l}) -- (#1,#2,#3+\value{l}) -- cycle;
}
\newcommand{\fillcelly}[4] {
    \draw[#4] (#1,#2,#3) -- (#1+\value{l},#2,#3) -- (#1+\value{l},#2,#3+\value{l}) -- (#1,#2,#3+\value{l}) -- cycle;
}
\newcommand{\fillcellz}[4] {
    \draw[#4] (#1,#2,#3) -- (#1+\value{l},#2,#3) -- (#1+\value{l},#2+\value{l},#3) -- (#1,#2+\value{l},#3) -- cycle;
}
\newcommand{\putcell}[5] {
    \filldraw[#4] (#1, #2+\value{l}/2, #3+\value{l}/2) node{#5};
}

\begin{figure}
	\setcounter{l}{1}
    \setcounter{L}{2*\value{l}}
    \begin{center}
        \begin{minipage}{0.4\textwidth} 
            \centering
            \resizebox{\textwidth}{!}{
                \picturematrix
            }
        \end{minipage}
        \begin{minipage}{0.55\textwidth} 
            \centering
            \begin{align*}
                \xrightarrow{
                    \quad 3-\text{line notation} \quad
                    }\quad
                \begin{pmatrix}
                    1111112222\\
                    1122331112\\
                    1133122221
                \end{pmatrix}\\
            \end{align*}
        \end{minipage}        
	\end{center}
    \caption{A 3-d hypermatrix $T$ with marginals $(64,532,442)$ and its $3$-line notation. Empty cells regarded as equal to $0$.}
    \label{fig:d-line}
\end{figure}

\subsection{$d$-line notation} For a $d$-dimensional $\mathbb{N}$-hypermatrix $H$, the \textit{$d$-line notation} of $H$ is the $d \times m$ table, where $m = |H| =  \sum_{\mathbf{i}} H_{\mathbf{i}}$, such that each column $(i_1,\ldots, i_d)^{}$ is repeated exactly $H_{i_1,\ldots,i_d}$ times, and
the columns are ordered lexicographically. 
We denote the resulting table by $d\text{-line}(H)$, see Fig.~\ref{fig:d-line}.

\subsection{Words}
Let $\alpha = (\alpha_1,\ldots,\alpha_\ell)$ be a sequence with $|\alpha| = m$. 

Let $A(\alpha)$ be the set of permutations of the word $1^{\alpha_1}\ldots \ell^{\alpha_\ell}$ of length $m$. The elements of $A(\alpha)$ are called the \textit{words of weight} $\alpha$. The positions of letters of a word $s \in A(\alpha)$ are called \textit{blocks}. For example, 
$$
    \text{for }s = 112113323 \in A(4,2,3) \text{ the blocks are }\{1,2,4,5\},\{3, 8\},\{6, 7, 9\}.
$$

A word $w\in A(\alpha)$ is called \textit{lattice}, if in each prefix there are at least as many letters $i$ as letters $i+1$, for any $i$. Note that the weight of a lattice word is always a partition. The set of lattice words of weight $\alpha$ is denoted by 
$A^{+}(\alpha) \subseteq A(\alpha)$. For example, $1123231$ is a lattice word of weight $322$, while $1232131$ is a word of weight $322$ which is not lattice.

For a $d$-tuple of weights $\pmb{\alpha} = (\alpha^{(1)},\ldots,\alpha^{(d)})$, with each $|\alpha^{(i)}| = m$, we denote by
$$A(\pmb{\alpha}) = A(\alpha^{(1)}) \times\ldots\times A(\alpha^{(d)})$$
the set of $d$-tuples of words of weight $\pmb{\alpha}$ (the set $A^+(\pmb{\alpha})$ is defined similarly).

\subsection{Tables and maps}\label{subsec:convention} Throughout the paper, a $d$-tuple of words $S = (s_1,\ldots,s_d) \in A(\pmb{\alpha})$ is interpreted in several equivalent ways:
\begin{enumerate}
    \item[(a)] $S$ is the $d \times m$ \textit{table of weight} $\pmb{\alpha}$ with the rows $s_1,\ldots,s_d$. 
    \item[(b)] $S$ is \textit{the map} $[m] \to \mathbb{N}^d$ with $S(j) = (a^{(1)}_j,\ldots,a^{(d)}_j)$ representing $j$-th column of the table (a);
    \item[(c)] $S$ is the hypermatrix $(S_{i_1,\ldots, i_d})$, so that $S_{i_1,\ldots,i_d}$ is the number of columns $(i_1,\ldots,i_d)$ in the table (a).
\end{enumerate}
While the tables (a) and maps (b) are essentially the same, the hypermatrices (c) will only be used when $S$ is considered modulo $S_m$ action.
We have the following presentations for the $d \times m$ table $S$ described in (a), (b): 
\begin{align*}
    S = \begin{pmatrix}
        \quad s_1 \quad  \\
        \quad \vdots \quad \\
        \quad  s_d \quad
    \end{pmatrix}
    = 
    \begin{pmatrix}
        \\    
        S(1) \ S(2) \ \cdots \ S(m)\\   
        \\   
    \end{pmatrix}.
\end{align*}
    

Let $B(\pmb{\alpha}) \subseteq A(\pmb{\alpha})$ be the subset of tables of weight $\pmb{\alpha}$ \textit{without repeating columns}, i.e. $S \in B(\pmb{\alpha})$ whenever $|\{S(1),\ldots, S(m)\}| = m$. Also, throught the paper we will use notation
$$
    ||S|| := \prod_{\mathbf{i} \in \mathbb{N}^d} S_{\mathbf{i}}! = |\mathrm{stab}_{S_m}(S)|,
$$
i.e. table $S$ stabilizer size w.r.t. column permutation group $S_m$.

\subsection{Lex-tables}
It is not hard to see that the $d$-line notation of hypermatrices from $\mathsf{T}_\mathbb{N}(\pmb{\lambda})$ corresponds to the tables of weight $\pmb{\lambda}$ with columns ordered lexicographically; we will shortly call them \textit{lex-tables} denoted as follows:
$$
    A^\mathrm{lex}(\pmb{\lambda}) := d\text{-line}(\mathsf{T}_\mathbb{N}(\pmb{\lambda})) \subseteq A(\pmb{\lambda}), 
    \quad 
    B^\mathrm{lex}(\pmb{\lambda}) :=  d\text{-line}(\mathsf{T}_{01}(\pmb{\lambda})) \subseteq B(\pmb{\lambda}).
$$
Alternatively, $A^\mathrm{lex}(\pmb{\lambda}) = A(\pmb{\lambda}) / S_m$, i.e. orbits of column permutation action so that each orbit is indexed with lexicographically minimal element.

Whenever we consider tables for arbitrary weights $\pmb{\lambda}$ of partitions of $m$, we write 
$$
    A_d(m) := \bigcup_{\pmb{\lambda} \vdash m} A(\pmb{\lambda}), \quad
    A_d := \bigcup_{m \ge 0} A_d(m), \qquad
    B_d(m) := \bigcup_{\pmb{\lambda} \vdash m} B(\pmb{\lambda}) \quad
    B_d := \bigcup_{m \ge 0} B_d(m)
$$
to indicate the sets of $d \times m$ tables without weight restriction. Similar to hypermatrices, the elements of $A_d$ and $B_d$ are called $\mathbb{N}$- and $(0,1)$-tables.

\subsection{Highest weight vectors} 
All the groups we consider are over $\mathbb{C}$.
The group $\mathrm{GL}(n)$ acts diagonally on $\otimes^m \mathbb{C}^n$ by left multiplication. We fix the standard basis of $\mathbb{C}^n = \langle e_1,\ldots,e_n \rangle$. The standard torus $T(n) := (\mathbb{C}^{\times})^{n} \subseteq \mathrm{GL}(n)$, the set of non-degenerate diagonal matrices, gives rise to the weight decomposition
$$
	\bigotimes^m \mathbb{C}^n = \bigoplus_{\alpha \in \mathbb{Z}^{n}_{\ge 0}} W_{\alpha}
$$
where $\alpha \in \mathbb{Z}^{n}_{\ge 0}$ and $W_{\alpha} = \{ v \in \bigotimes^m \mathbb{C}^{n} \mid t \cdot v = t^{\alpha} v = t_{1}^{\alpha_{1}}\ldots t_{\ell}^{\alpha_{\ell}} v\}$ is the space of weight  $\alpha$ vectors. 
The {\it highest weight vector} of weight $\lambda$ is a nonzero weight $\lambda$ vector which is invariant under the action of the subgroup of unitriangular matrices $U(n) \subset \mathrm{GL}(n)^{\times d}$, i.e. $U(n) \cdot v = v$ and $v \in W_{\lambda}$.

For the $n^{d}$-dimensional vector space $(\mathbb{C}^{n})^{\otimes d} = \langle e_{i_1,\ldots,i_d} \rangle_{1 \le i_j\le n}$ (where $e_{i_1,\ldots,i_d} = e_{i_1}\otimes\ldots\otimes e_{i_d}$), the natural (multilinear) action of the group $\mathrm{GL}(n)^{\times d}$ is given by
$$
    (G_1,\ldots,G_d) \cdot v_1 \otimes\ldots\otimes v_d = G_1 v_1 \otimes\ldots\otimes G_d v_d, 
$$
where $v_i \in \mathbb{C}^n$ and $G_i \in \mathrm{GL}(n)$. This action induces the action on the tensor space $\bigotimes^{m} (\mathbb{C}^{n})^{\otimes d}$. 
For the group $\mathrm{GL}(n)^{\times d}$ the definitions above can be rewritten similarly, where $T(n)$ is replaced with $T(n)^{\times d}$, $U(n)$ with $U(n)^{\times d}$ and the weights $\alpha \in \mathbb{Z}^n_{\ge 0}$ with $\pmb{\alpha} \in (\mathbb{Z}^n_{\ge 0})^{\times d}$.
We denote by $\mathrm{WV}_{\pmb{\lambda}} V$ and $\mathrm{HWV}_{\pmb{\lambda}} V$ the weight and the highest weight subspace of $V$ of weight $\pmb{\lambda}$.


\subsection{Representations of the symmetric and general linear groups} 
The standard references for the following material are \cite{fh, ful}.

For a partition $\lambda \vdash m$ the bijective map $Y: [m] \to D(\lambda)$ is called a \textit{Young tableau} of shape $\lambda$. In other words, $Y$ is the filling of the Young diagram $D(\lambda)$ with numbers $1,\ldots,m$. By $Y_{i,j} := Y^{-1}(i,j)$ we denote the number written in the box $(i,j)$.  Let $\mathrm{YT}(\lambda)$ be the set of  Young tableaux of shape $\lambda$. 

We call $Y$ a \textit{standard Young tableau} whenever $Y_{i,j} < \min(Y_{i+1,j}, Y_{i,j+1})$ (where we assume that $Y_{i,j} = \infty$ whenever $(i,j) \not\in D(\lambda)$). The set of standard Young tableaux is denoted by $\mathrm{SYT}(\lambda)$.

The irreducible representations of the group $\mathrm{GL}(n)$ are indexed by partitions $\lambda$, known as \textit{Weyl modules} denoted by $V(\lambda)$. It is known that the weight space of weight $\lambda$ is $1$-dimensional in $V(\lambda)$ called \textit{the highest weight line}. This line characterizes the irreducible representation.

The irreducible representations of the symmetric group $S_m$ are indexed by partitions $\lambda$, known as  \textit{Specht modules} denoted by $[\lambda]$. 

Let us recall the classical construction of irreducible representations of the symmetric group $S_{m}$. The group $S_{m}$ acts on tableaux of shape $\lambda$ by $(\pi T)_{i,j}=\pi(T_{i,j})$, and also acts on the tensor space $\bigotimes^{m} \mathbb{C}^n$ by permuting the tensor factors.
Let $\lambda \vdash m$ be a partition and $T \in \mathrm{SYT}(\lambda)$. Let $R(T)$ and $C(T)$ be the row and column stabilizers of $T$, and define the following elements: 
\begin{align*}
    a_{T} = \sum_{g \in R(T)}g, \qquad b_{T} = \sum_{g \in R(T)}\sgn(g) g, \qquad c_T = b_T a_T.
\end{align*}
It is known that $c_{T}$ is a projector and
$$
c_{T}: \bigotimes^{m}\mathbb{C}^{n} \to V(\lambda)_{T}, \qquad c_{T}: \mathbb{C}[S_{m}] \to [\lambda]_T
$$
where the subscript $T$ of $V(\lambda)_{T}$ indicates concrete irreducible copy of $\bigotimes^m \mathbb{C}^n$.

\subsection{Symmetrization and alternation}
Let $V$ be a vector space and $\bigotimes V = \bigoplus_{m \ge 0} \bigotimes^m V$ be its graded tensor algebra. The $m$-th grade of the tensor algebra $\bigotimes^m V$ has two important subspaces: symmetric and alternating spaces.

The grade-preserving maps $\mathrm{Sym}: \bigotimes V \to \bigotimes V$ and $\mathrm{Alt}: \bigotimes V \to \bigotimes V$ are defined as follows:
\begin{align*}
\mathrm{Sym}(v_{1}\otimes\ldots\otimes v_{m}) &:= \sum_{\omega \in S_{m}} v_{\omega(1)}\otimes\ldots\otimes v_{\omega(m)},\\
	\mathrm{Alt}(v_{1}\otimes\ldots\otimes v_{m}) &:= \sum_{\omega \in S_{m}} \sgn(\omega)\, v_{\omega(1)}\otimes\ldots\otimes v_{\omega(m)}
\end{align*}
and extended linearly.\footnote{These maps are usually defined with a scalar factor $1/m!$} These operators are scalar multiples of projectors (i.e. $P^2 = \alpha P$ for $\alpha \neq 0$).
Denote the images $\bigvee^m V := \mathrm{Sym}(\bigotimes^m V)$ 
and $\bigwedge^{m} V := \mathrm{Alt}(\bigotimes^m V)$ for each $m \ge 0$ and by
$$
\bigvee V := \bigoplus_{m\ge 0} \bigvee^{m} V, \quad \bigwedge V := \bigoplus_{m \ge 0} \bigwedge^m V
$$
the so-called \textit{symmetric} and \textit{skew-symmetric} algebras. To make these vector spaces algebras one can endow them with the products: $a \vee b := \mathrm{Sym}(a \otimes b)$ and $a \wedge b := \mathrm{Alt}(a \otimes b)$ defined on homogeneous elements. The symmetrc algebra $\bigvee V$ with the given product is isomorphic to the ring of polynomials of $\dim V$ variables, i.e. $\mathbb{C}[V] \cong \bigvee V$. The alternating algebra $\bigwedge V$ is also referred to as exterior algebra or wedge space. 

\vspace{0.5em}

Note that for $V = (\mathbb{C}^{n})^{\otimes d}$, the sets $\{\bigvee e_{S} : S \in \mathsf{T}_{\mathbb{N}}(\pmb{\lambda}) \}$ and $\{\bigwedge e_{T} : T \in \mathsf{T}_{01}(\pmb{\lambda}') \}$ are the standard bases of the weight spaces $\mathrm{WV}_{\pmb{\lambda}} \bigvee V$ and $\mathrm{WV}_{\pmb{\lambda}} \bigwedge V$, respectively.

\subsection{Kronecker coefficients}
The dimensions of the highest weight subspaces that we consider are given by generalized Kronecker coefficients. The \textit{generalized Kronecker coefficient} $g(\pmb{\lambda})$, indexed by a $d$-tuple $\pmb{\lambda}$  of partitions of $m$, is the multiplicity of the trivial $S_m$-irreducible representation $[1 \times m]$ in the tensor product $[\lambda^{(1)}]\otimes \ldots \otimes [\lambda^{(d)}]$ of $S_m$-irreducibles (tensor product is again a representation with diagonal $S_m$-action). Alternatively, $g(\pmb{\lambda})$ is the multipcity of $[\lambda^{(1)}]$ in $[\lambda^{(2)}]\otimes \ldots \otimes [\lambda^{(d)}]$.
It is known, that $[\lambda]\otimes[\mu] \cong [\lambda']\otimes[\mu']$, which, together with the above, implies the following properties:
    \begin{enumerate}
        \item[(a)] $g(\lambda^{(1)},\ldots,\lambda^{(d)}) = g(\lambda^{(\sigma(1))},\ldots,\lambda^{(\sigma(d))})$ for any permutation $\sigma \in S_d$.
        \item[(b)] $g(\lambda^{(1)},\lambda^{(2)},\ldots,\lambda^{(d)}) = g((\lambda^{(1)})',(\lambda^{(2)})', \lambda^{(3)}, \ldots,\lambda^{(d)})$.
        \item[(c)] $g(1\times m, \pmb{\lambda}) = g(\pmb{\lambda})$.
    \end{enumerate} 
For $d = 2$, we have $g(\lambda, \mu) = \delta_{\lambda, \mu}$ is the Kronecker delta and for $d = 3$ we recover the usual Kronecker coefficients. 
Then we have
$$
    \dim \mathrm{HWV}_{\pmb{\lambda}} \bigvee (\mathbb{C}^n)^{\otimes d} = g(1 \times m, \pmb{\lambda}) = g(\pmb{\lambda}), \quad \text{for any }d \ge 2,
$$
and
$$
\dim \mathrm{HWV}_{\pmb{\lambda}} \bigwedge (\mathbb{C}^n)^{\otimes d} = g(m \times 1, \pmb{\lambda}) = g(1 \times m, (\lambda^{(1)})', \lambda^{(2)},\ldots,\lambda^{(d)}),
$$
where the latter is equal to $g((\lambda^{(1)})', \lambda^{(2)},\ldots,\lambda^{(d)})$, which is rewrites to $g(\pmb{\lambda}')$ for odd $d$.
\begin{remark}
    The Kronecker coefficients can also be defined via representations of $\mathrm{GL}(n)$.
    Consider $\mathrm{GL}(n^d)$-irreducible representation $V_{n^d}(1 \times m)$ (the subscript emphasizes the group) of degree $m$ polynomials. Under the group homomorphism $\mathrm{GL}(n)^{\times d} \to \mathrm{GL}(n^d)$, given by $g_1\times\ldots\times g_d \to g_1 \boxtimes \ldots \boxtimes g_d$ (where $\boxtimes$ is Kronecker product of matrices), this representation restricts to the representation of its subgroup $\mathrm{GL}(n)^{\times d}$ and $g(\pmb{\lambda})$ is the multiplicity of irreducible $V_n(\lambda^{(1)})\otimes\ldots\otimes V_n(\lambda^{(d)})$ in this decomposition.
\end{remark}


\section{Highest weight vectors for single tensor component}\label{sec:spanningset}
In this section we describe how to construct a basis of the space $\mathrm{HWV}_{{\lambda}} \bigotimes^m\mathbb{C}^n$, i.e. when $d = 1$.
Subsequently, we will generalize this to $d$ tensor components and then project to symmetric and alternating spaces to obtain spanning sets of these spaces. The idea of construction is not new, see e.g. \cite{bi, widg, widg1}; 
here we present a self-contained description. 

\begin{definition}[Signature function]\label{def:sign}
    Let $\lambda = (\lambda_1,\ldots,\lambda_\ell) \vdash m$ be a partition and $s \in A(\lambda)$ be a word of weight $\lambda$. By $\mathrm{block}_i = s^{-1}(i)$ denote the set of positions of the letter $i$ in $s$, referred to as \textit{blocks}. For a map $\sigma: [m] \to \mathbb{N}$, define the following {\it signature} function:
    \begin{equation}
        \sgn_s(\sigma) := \sgn(\sigma(\mathrm{block}_1))\cdots\sgn(\sigma(\mathrm{block}_\ell))
    \end{equation}
    where for a set $B = \{b_1<\ldots< b_k \}$ we denote $\sigma(B) = (\sigma(b_1),\ldots,\sigma(b_k))$ and
    \begin{equation}
        \sgn(a_1,\ldots, a_k) = 
        \begin{cases}
            \sgn(a),& \text{if } a = (a_1, \ldots, a_k) \text{ is a permutation of $[k]$}\\
            0,              & \text{otherwise}.
        \end{cases}
    \end{equation}
  
    For example, for $s = 11\textcolor{blue}{22}\textcolor{orange}{3}11\textcolor{orange}{3}\textcolor{blue}{2}$ of weight $(4,3,2)$ we have 
    \begin{align*}
        \sgn_s(12\textcolor{blue}{12}\textcolor{orange}{1}34\textcolor{orange}{2}\textcolor{blue}{3}) &= \sgn(1234)\, \sgn(\textcolor{blue}{123})\, \sgn(\textcolor{orange}{12}) = +1,\\
        \sgn_s(23\textcolor{blue}{32}\textcolor{orange}{1}14\textcolor{orange}{2}\textcolor{blue}{1}) &= \sgn(2314)\, \sgn(\textcolor{blue}{321})\, \sgn(\textcolor{orange}{12}) = -1,\\
        \sgn_s(24\textcolor{blue}{35}\textcolor{orange}{1}14\textcolor{orange}{2}\textcolor{blue}{1}) &= \underbrace{\sgn(2414)}_{= 0 }\, \underbrace{\sgn(\textcolor{blue}{351})}_{=0}\, \sgn(\textcolor{orange}{12}) = 0. 
    \end{align*}
    In other words, $\sgn_s(w)$ is zero whenever the word $w$ does not have permutations in blocks of $s$; 
    otherwise, its value is equal to the product of block permutation signatures. It is easy to see that only words of weight $\lambda'$ can be non-zero at $\sgn_s(\cdot)$ for $s \in A(\lambda)$.
\end{definition}

We first obtain a spanning set of $\mathrm{HWV} \otimes^m \mathbb{C}^n$. 

\begin{remark}
The construction we show differs from the one represented in \cite{widg1} by reducing the size of the index set from $S_m$ to $A(\lambda)$.
\end{remark}

\begin{definition}[HW $p$-vectors]

    Define $\ell$-tensor $\dett_{\ell} \in \bigotimes^{\ell} \mathbb{C}^n$ for $\ell \le m$ as the (dual of) determinant as follows:
    $$
        \dett_{\ell} := \sum_{\pi \in S_\ell} \sgn(\pi)\, e_{\pi(1)} \otimes \ldots \otimes e_{\pi(\ell)},
    $$
    i.e. $\dett^{*}(v_1,\ldots,v_\ell)$ is the determinant of the top $\ell \times \ell$ submatrix of an $m \times \ell$ matrix with the vectors $v_i$ given by columns. It is not hard to verify that $\det_{\ell}$ is the highest weight vector of weight $(1^{\ell},0^{m - \ell})$. 

    Let $\lambda \vdash m$ be a partition with $\ell(\lambda) \le n$ and the first part $k = \lambda_1$. For a word $t \in A(\lambda')$ of conjugate weight $\lambda'$ define
    $$
        p_t := \pi_t \cdot (\dett_{\lambda'_1} \otimes\ldots\otimes \dett_{\lambda'_k}) \in \mathrm{HWV}_\lambda \bigotimes^m \mathbb{C}^n,
    $$
    where $\pi_t \in S_m$ is the permutation of minimal length which sends the word $1^{\lambda'_1}\ldots k^{\lambda'_k}$ to the word $t$ (the minimal element of the coset in $S_m / Y_{\lambda'}$ corresponding to $t$), i.e. it does not permute entries of the word within the same block of the initial word. Single $t$-block represents positions where $\det$ is taken from. For example, $p_{11122} = \dett_3 \otimes \dett_2$ and $p_{11212} = (35)\cdot (\dett_3 \otimes \dett_2)$. 

    
    We will work with concrete form of $p_t$ for $t \in A(\lambda')$ via $\sgn_t(\cdot)$ function:
    $$
        p_t = \sum_{s \in A(\lambda)} \sgn_t(s) \cdot \bigotimes_{i=1}^m e_{s(i)} \in \mathrm{HWV}_{\lambda} \bigotimes^m \mathbb{C}^n.
    $$
    Since $\det_\ell \in \mathrm{HWV}_{(1^\ell)}\otimes^\ell \mathbb{C}^n$, then $p_t \in \mathrm{HWV}_{\lambda}\otimes^m \mathbb{C}^n$ for any $t \in A(\lambda')$. 

    For a tableau $T \in \mathrm{YT}(\lambda)$ let $col(T) \in A(\lambda')$ be a word $w_1\ldots w_m$ with $w_i$ equal to the index of the column containing letter $i$ in $T$, and $row(T) \in A(\lambda)$ is defined similarly for rows.
    For instance, for
    \begin{align}\label{ex:tableauexample}
    \ytableausetup{centertableaux}
    T = \begin{ytableau}
    1 & 3 & 5 \\
    2 & 6 & 7 \\
    4
    \end{ytableau}
    \end{align}
    we have $col(T) = 1121323$ and $row(T) = 1213122$. Note that the maps 
    $$
    row:\mathrm{YT}(\lambda) \to A(\lambda)\quad\text{and}\quad col:\mathrm{YT}(\lambda) \to A(\lambda')
    $$ 
    are surjections. Meanwhile, the maps between SYTs and lattice words
    $$
    row: \mathrm{SYT}(\lambda) \to A^+(\lambda)\quad\text{and}\quad col: \mathrm{SYT}(\lambda) \to A^+(\lambda')
    $$
    are bijections. We write $p_{T} := p_{col(T)}$. For instance, for $T$ in \eqref{ex:tableauexample} we have $p_T = p_{1121323}$.
\end{definition}

\begin{remark}
    Notably, the original work \cite{young} of Alfred Young on representation theory of the symmetric group contain analogous constructions; in particular, he considered pairs of `good' tableaux, which in essence are the pairs $(X \in row^{-1}(s), Y \in col^{-1}(t))$ with $\sgn_s(t) \neq 0$, see also \cite{garsia}. We will see later that the introduced signature functions are related to double cosets of Young subgroups.
\end{remark}

The following proposition is obtained by standard well-known technique. 

\begin{proposition}\label{th:basis-single}
    The set $\{p_{col(T)}\}_{T \in \mathrm{SYT}(\lambda)} = \{p_{t}\}_{t \in A^+(\lambda)}$ forms the basis of $\mathrm{HWV}_{\lambda} \bigotimes^m \mathbb{C}^n$.
\end{proposition}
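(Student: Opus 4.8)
The plan is to prove that the family $\{p_t\}_{t \in A^+(\lambda)}$ is both spanning and linearly independent in $\mathrm{HWV}_\lambda \bigotimes^m \mathbb{C}^n$, and then invoke the dimension count $\dim \mathrm{HWV}_\lambda \bigotimes^m \mathbb{C}^n = \dim V(\lambda) = f^\lambda = |\mathrm{SYT}(\lambda)| = |A^+(\lambda)|$ (the last equality via the bijection $\mathrm{col}:\mathrm{SYT}(\lambda)\to A^+(\lambda)$ recorded just above) to conclude it is a basis. In fact, once linear independence is established, the cardinality match already forces spanning, so the main work is independence.

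\textbf{Linear independence via a triangularity argument.} Recall the explicit formula $p_t = \sum_{s \in A(\lambda)} \sgn_t(s)\, \bigotimes_{i=1}^m e_{s(i)}$, so $p_t$ is determined by its coefficient vector $(\sgn_t(s))_{s \in A(\lambda)}$ in the standard basis $\{\bigotimes e_s\}$ of the weight space $\mathrm{WV}_\lambda \bigotimes^m \mathbb{C}^n$. I would fix a suitable linear order on $A^+(\lambda)$ — say the lexicographic order on words — and show that the matrix $\big(\sgn_t(s)\big)_{t \in A^+(\lambda),\, s \in A^+(\lambda)}$ is triangular with nonzero diagonal entries. Concretely, for $t \in A^+(\lambda)$ one reads off a canonical tableau (the unique $T \in \mathrm{SYT}(\lambda)$ with $\mathrm{col}(T) = t$), and $\mathrm{row}(T) \in A^+(\lambda)$ is a lattice word of weight $\lambda$; the claim is that $\sgn_t(\mathrm{row}(T)) = \pm 1 \neq 0$ (each block of $t$ meets $\mathrm{row}(T)$ in an honest permutation, because $T$ is standard), while $\sgn_t(s) = 0$ for every lattice word $s$ that is strictly smaller than $\mathrm{row}(T)$ in the chosen order. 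The standardness of $T$ is exactly what makes the ``diagonal'' pairing nonzero, and the lattice/order condition is what kills the below-diagonal entries: if $s$ is a lattice word ordered before $\mathrm{row}(T)$, then in some block of $t$ the tuple $s(\mathrm{block}_i)$ either repeats a value or misses one, forcing $\sgn(\cdot)=0$. This is the classical straightening/Garnir-type phenomenon and is essentially the content of Young's original construction referenced in the Remark.

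\textbf{Spanning (the easy direction, if one prefers to do it directly).} Alternatively, one shows directly that every element of $\mathrm{HWV}_\lambda \bigotimes^m \mathbb{C}^n$ lies in $\mathrm{span}\{p_t : t \in A^+(\lambda)\}$. Since $p_T = c_T \cdot(\text{canonical arrangement})$ up to the coset-representative permutation $\pi_t$, and the Young symmetrizers $c_T$ for $T\in\mathrm{SYT}(\lambda)$ generate the $\lambda$-isotypic projector of $\mathbb{C}[S_m]$, the vectors $p_t$ already span the whole $V(\lambda)$-isotypic component's highest-weight line inside each irreducible copy; intersecting with the weight-$\lambda$ (highest) weight space gives exactly $\mathrm{HWV}_\lambda$. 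One uses here the standard fact (recalled in the excerpt) that $c_T:\bigotimes^m\mathbb{C}^n \to V(\lambda)_T$ is a nonzero projector onto an irreducible copy, together with the fact that $\mathrm{HWV}_\lambda$ picks out one line from each copy.

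\textbf{Main obstacle.} The genuinely nontrivial step is the vanishing of the below-diagonal coefficients $\sgn_t(s)$ for $s, t \in A^+(\lambda)$ with $s$ preceding $\mathrm{row}(T_t)$ — i.e. proving the unitriangularity rather than merely the nonvanishing of the diagonal. This requires pinning down compatible orders on $A^+(\lambda)$ for the row-index and the column-index and verifying the combinatorial implication ``lattice word too small $\Rightarrow$ some block of $t$ is not hit by a permutation,'' which is where one must be careful; everything else (the explicit formula for $p_t$, the highest-weight property, the bijections between $\mathrm{SYT}(\lambda)$ and lattice words, and the dimension count $g(1\times m,\lambda)=f^\lambda$) is either already established in the excerpt or is a cited classical fact. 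I expect the authors' ``standard well-known technique'' to be precisely this triangularity argument.
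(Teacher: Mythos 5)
You propose a unitriangularity argument for linear independence and expect this to be the paper's ``standard well-known technique''; it is not. The paper instead uses Schur--Weyl duality to decompose the $\lambda$-isotypic part of $\bigotimes^m \mathbb{C}^n$ as $\bigoplus_{T \in \mathrm{SYT}(\lambda)} V(\lambda)_T$ with $V(\lambda)_T = c_T\bigl(\bigotimes^m\mathbb{C}^n\bigr)$, notes that each summand carries a unique highest-weight line $L_T$ so that $\mathrm{HWV}_{\lambda} \bigotimes^m \mathbb{C}^n = \bigoplus_T L_T$, and then does the short direct computation $c_T\, e_{\mathrm{row}(T)} = |R(T)|\, p_{\mathrm{col}(T)}$, which shows $p_{\mathrm{col}(T)}$ is a nonzero vector on $L_T$. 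Independence and spanning both fall out of the direct-sum structure at once; no ordering on tableaux and no off-diagonal vanishing is ever invoked.

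Your triangularity route is viable in principle, but as written it has a genuine gap: the assertion that $\sgn_t(s)=0$ whenever the lattice word $s$ precedes $\mathrm{row}(T_t)$ (for $t=\mathrm{col}(T_t)$) is stated, not proved, and plain lexicographic order on $A^+(\lambda)$ is not obviously the right order to make the matrix triangular. The classical version of this triangularity is proved against a dominance-type order on tabloids (equivalently a last-letter or column order on $\mathrm{SYT}(\lambda)$), and the vanishing criterion goes through a pigeonhole/Garnir argument relating the column stabilizer of $T$ and the row stabilizer of $T'$; none of that is carried out here, and you correctly flag it as the hard step. So your main argument does not yet constitute a proof. Your secondary argument (spanning via Young symmetrizers) is essentially the paper's proof, but you undersell it by treating it only as the ``easy spanning direction'': once you observe that the $V(\lambda)_T$ form a direct sum and each $p_{\mathrm{col}(T)}$ is a nonzero vector in the distinct line $L_T\subset V(\lambda)_T$, linear independence comes for free with spanning, and the triangularity machinery becomes unnecessary.

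Minor point: as in the paper's own statement, note that $\mathrm{col}$ maps $\mathrm{SYT}(\lambda)$ bijectively to $A^+(\lambda')$, so the index set in the proposition should be $A^+(\lambda')$ rather than $A^+(\lambda)$; your coefficient matrix $(\sgn_t(s))$ is indexed by $t\in A^+(\lambda')$ and $s\in A^+(\lambda)$ and is square by $f^\lambda = |\mathrm{SYT}(\lambda)|$, not because both index sets are literally $A^+(\lambda)$.
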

\begin{proof}
    By the Schur--Weyl duality we have the following decomposition with respect to the action of $S_m \times \mathrm{GL}(n)$: 
    $$
        (\mathbb{C}^n)^{\otimes m} = \bigoplus_{\lambda \vdash M,\, \ell(\lambda) \le m} [\lambda] \otimes V(\lambda).
    $$
    Moreover, inducing the action to $\mathrm{GL}(n)$, for $T \in \mathrm{SYT}(\lambda)$ let $V(\lambda)_T := c_T((\mathbb{C}^n)^{\otimes m})$ be the image of Young symmetrizer, which is $\mathrm{GL}(n)$-irreducibe. 
    Then each irreducible 
    $V(\lambda)_{T}$ 
    contains unique highest weight line $L_{T}$. On the other hand,
    \begin{align*}
        \mathrm{HWV}_{\lambda}(\mathbb{C}^n)^{\otimes m} =  
        \mathrm{HWV}_{\lambda} \bigoplus_{T \in \mathrm{SYT}(\lambda)} V(\lambda)_T = 
        \bigoplus_{T \in \mathrm{SYT}(\lambda)} L_{T}.
    \end{align*}
    In particular, the highest weight space is an irreducible $S_{m}$-representation indexed by $\lambda$. Set $W_{\lambda} = \mathrm{WV}_{\lambda} \bigotimes^m \mathbb{C}^n$.
    We claim that $c_{T}(W_{\lambda}) = L_{T}$. This can be seen from the following diagram:
    \[\begin{tikzcd}
    W_{\lambda}  \arrow[swap]{d}{c_{T}} & \subseteq & (\mathbb{C}^n)^{\otimes m} \arrow{d}{c_{T}} \\
    L_{T} & \subseteq & V(\lambda)_T
    \end{tikzcd}
    \]
    and the fact that $c_{T}$ preserves weight.
    Since $c_{T}((\mathbb{C}^n)^{\otimes m}) = V(\lambda)_T$ and $V(\lambda)_T \cap W_{\lambda} = L_{T}$ we conclude that $c_{T}(W_{\lambda}) = L_{T}$. It remains to show that $L_T = p_{col(T)}$.

    Let $e_{row(T)} = e_{row(T)_1} \otimes\ldots\otimes e_{row(T)_m} \in W_\lambda$.
    Consider the product $c_{T}\cdot e_{row(T)} = b_{T} \cdot (a_{T} \cdot e_{row(T)})$. Note that for any $\pi \in R(T)$ (row stabilizer) there holds $\pi e_{row(T)} = e_{row(T)}$, since positions from the fixed row of $T$ are filled with the same letter in $row(T)$. Thus $a_{T}(e_{T}) = |R(T)| \cdot e_{T}$. Furthermore,  it turns out that $b_{T}(e_{row(T)}) = p_{col(T)}$. Indeed, $b_T$ alternates positions from the same column, thus $i$-th instance of each letter in $row(T)$ corresponds to positions alternated in the $i$-th column. In particular, $\langle p_{col(T)}, e_{row(T)} \rangle = 1$.
    Hence, $p_{col(T)}$ defines highest weight line of $V(\lambda)_{T}$ and the set $\{p_{col(T)}\}_{T \in \mathrm{SYT}(\lambda)}$ forms the basis for the corresponding highest weight space.
\end{proof}
\begin{remark}
    The space spanned by the forms $\{p_t\}$ are isomorphic to the both Specht module constructions: polytabloids and column tabloids \cite{ful}, for which we have $\pi \cdot p_t = \sgn(\pi) p_t$ only for $\pi \in \mathrm{Stab}(t)$ (column stabilizers).
    In the next section we will indirectly apply the basis rescaling $p_t \to (-1)^t p_t$ (see Def.~\ref{def:msign}), which suitably rescales the action of $S_m$: $\pi \cdot p_t = (-1)^{\pi} p_t$ for any $\pi \in S_m$. These signatures are important for Theorem~\ref{th:intro2}.
\end{remark}

\section{Spanning sets for tensor spaces}\label{sec:spanningtensor}
Let $\pmb{\lambda} = (\lambda^{(1)}, \ldots, \lambda^{(d)})$ with each $\lambda^{(i)} \vdash m$ and $\ell(\lambda^{(i)}) \le n$.
We now construct a basis of the highest weight space $\mathrm{HWV}_{\pmb{\lambda}} \bigotimes^m (\mathbb{C}^{n})^{\otimes d}$. The key observation is that there is a natural isomorphism
$$
    \bigotimes^m \bigotimes^d \mathbb{C}^n \cong \bigotimes^d \bigotimes^m  \mathbb{C}^n,
$$
just by changing the order of tensor product. The action of the groups $S_m$ and $\mathrm{GL}(n)^{\times d}$ can be described by the table view of tensor product:
\begin{center}
    \begin{tabular}{  c | l l l l l l l l} 
                \quad\ \ \ $S_m\acts$  & $1$            &         & $2$            &        &\ldots &        & $m$           & \\ \hline
$\mathrm{GL}(n)_1\acts$ & $\mathbb{C}^n$ & $\otimes$ & $\mathbb{C}^n$ &$\otimes$ &\ldots &$\otimes$ &$\mathbb{C}^n$ & $\otimes$ \\ 
\ldots                & \ldots       & \ldots  & \ldots       &\ldots  &\ldots &\ldots  &\ldots       & $\otimes$ \\
$\mathrm{GL}(n)_d\acts$ & $\mathbb{C}^n$ & $\otimes$ & $\mathbb{C}^n$ &$\otimes$ &\ldots &$\otimes$ &$\mathbb{C}^n$ & \\
\end{tabular}
\end{center}
where $S_m$ acts by permuting columns and each $\mathrm{GL}(n)$ acts diagonally on the corresponding row.
\begin{definition}
    Let $T = (t_1,\ldots,t_d) \in A(\pmb{\lambda'})$ be a $d \times m$ table with rows $t_i$. Define the following vector:
    $$
        P'_{T} := p_{t_1} \otimes \ldots \otimes p_{t_d} \in \mathrm{HWV}_{\pmb{\lambda}}\bigotimes^{d} \bigotimes^m \mathbb{C}^n
    $$
    and let $P_T$ denote its image in $\bigotimes^m \bigotimes^d \mathbb{C}^n$ after reordering tensor factors to $\bigotimes^d \bigotimes^m  \mathbb{C}^n$.
    We can write down the concrete form of $P_T$ as follows:
    \begin{align*}
        P'_{T}
        &= \left(
            \sum_{s_1 \in A(\lambda^{(1)})} \sgn_{t_1}(s_1) \bigotimes_{i=1}^m e_{s_1(i)}
        \right)
        \otimes\ldots\otimes 
        \left(
            \sum_{s_d \in A(\lambda^{(d)})} \sgn_{t_d}(s_d) \bigotimes_{i=1}^m e_{s_d(i)}
        \right)
            \\
        &= \sum_{(s_{1},\ldots,s_{d}) \in A(\pmb{\lambda})}
        \sgn_{t_1}(s_1)\cdots\sgn_{t_d}(s_d) 
        \bigotimes_{i=1}^m e_{s_1(i)} 
        \otimes \ldots \otimes 
        \bigotimes_{i=1}^m e_{s_d(i)}
        \\
        &\cong 
        \sum_{(s_{1},\ldots,s_{d}) \in A(\pmb{\lambda})}
        \sgn_{t_1}(s_1)\cdots\sgn_{t_d}(s_d) 
        \bigotimes_{i=1}^m 
            e_{s_1(i)} 
            \otimes \ldots \otimes 
            e_{s_d(i)} = P_T,
    \end{align*}
    and hence viewing $(s_1,\ldots,s_d) = S \in A(\pmb{\lambda})$ we rewrite
    \begin{align}
        P_T = \sum_{S \in A(\pmb{\lambda})} \sgn_T(S) \bigotimes_{i=1}^m e_{S(i)}, 
    \end{align}
    where $\sgn_T(S) = \sgn_{t_1}(s_1)\cdots\sgn_{t_d}(s_d)$. Note that in general $\sgn_T(S) \neq 0$ if and only if $\mathrm{weight}(T) = \mathrm{weight}(S)'$.
\end{definition}

\begin{proposition}\label{th:tensor-basis}
    The set $$\{P_{col(T)} \}_{T \in \mathrm{SYT}(\pmb{\lambda})} = \{P_{T}\}_{T \in A^+(\pmb{\lambda})}$$ forms a basis of $\mathrm{HWV}_{\pmb{\lambda}} \bigotimes^m (\mathbb{C}^n)^{\otimes d}$.
\end{proposition}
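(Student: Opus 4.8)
The plan is to reduce Proposition~\ref{th:tensor-basis} to Proposition~\ref{th:basis-single} by exploiting the isomorphism $\bigotimes^m\bigotimes^d\mathbb{C}^n\cong\bigotimes^d\bigotimes^m\mathbb{C}^n$ together with the diagonal $\mathrm{GL}(n)^{\times d}$-action and the behaviour of highest weight spaces under tensor products. First I would observe that for a tensor product of representations $W_1,\ldots,W_d$ of $\mathrm{GL}(n)$, acted on by $\mathrm{GL}(n)^{\times d}$ component-wise, one has $\mathrm{HWV}_{\pmb\lambda}(W_1\otimes\cdots\otimes W_d)=\mathrm{HWV}_{\lambda^{(1)}}W_1\otimes\cdots\otimes\mathrm{HWV}_{\lambda^{(d)}}W_d$; this is immediate because the torus and unipotent subgroups act independently on each factor and the weight decomposition of a tensor product is the tensor product of weight decompositions. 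Applying this with $W_i=\bigotimes^m\mathbb{C}^n$ and using Proposition~\ref{th:basis-single} for each factor, a basis of $\mathrm{HWV}_{\pmb\lambda}\bigotimes^d\bigotimes^m\mathbb{C}^n$ is $\{p_{t_1}\otimes\cdots\otimes p_{t_d}: t_i\in A^+(\lambda^{(i)})\}=\{P'_{col(T)}\}_{T\in\mathrm{SYT}(\pmb\lambda)}$, where $\mathrm{SYT}(\pmb\lambda):=\mathrm{SYT}(\lambda^{(1)})\times\cdots\times\mathrm{SYT}(\lambda^{(d)})$ and $A^+(\pmb\lambda)=\prod_i A^+(\lambda^{(i)})$.

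Next I would transport this basis through the reordering isomorphism. The map $\bigotimes^d\bigotimes^m\mathbb{C}^n\to\bigotimes^m\bigotimes^d\mathbb{C}^n$ is a $\mathrm{GL}(n)^{\times d}$-equivariant linear isomorphism (it does not interact with the $\mathrm{GL}(n)$-actions at all — each $\mathrm{GL}(n)_\ell$ still acts on the same collection of $\mathbb{C}^n$-slots, just relabelled), hence it carries $\mathrm{HWV}_{\pmb\lambda}\bigotimes^d\bigotimes^m\mathbb{C}^n$ isomorphically onto $\mathrm{HWV}_{\pmb\lambda}\bigotimes^m\bigotimes^d\mathbb{C}^n=\mathrm{HWV}_{\pmb\lambda}\bigotimes^m(\mathbb{C}^n)^{\otimes d}$, and carries the basis $\{P'_{col(T)}\}$ to $\{P_{col(T)}\}$ by the very definition of $P_T$. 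Since an isomorphism sends a basis to a basis, $\{P_{col(T)}\}_{T\in\mathrm{SYT}(\pmb\lambda)}$ is a basis of $\mathrm{HWV}_{\pmb\lambda}\bigotimes^m(\mathbb{C}^n)^{\otimes d}$. The identification $\{P_{col(T)}\}_{T\in\mathrm{SYT}(\pmb\lambda)}=\{P_T\}_{T\in A^+(\pmb\lambda)}$ then follows from the fact that $col:\mathrm{SYT}(\lambda^{(i)})\to A^+((\lambda^{(i)})')$ is a bijection in each coordinate (recalled in the excerpt); one should be slightly careful that the index convention matches — $P_T$ is indexed by tables of weight $\pmb{\lambda'}$ via $p_{t_i}$ with $t_i\in A((\lambda^{(i)})')$ — so the bijection $\mathrm{SYT}(\pmb\lambda)\leftrightarrow A^+(\pmb{\lambda'})$ is what is actually being used, and I would state it in the form matching the definition of $P'_T$.

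The only genuine content beyond bookkeeping is the tensor-product formula for highest weight spaces, and that is essentially a one-line consequence of the independence of the $T(n)^{\times d}$- and $U(n)^{\times d}$-actions on the factors; I do not expect a real obstacle there. The main thing to be careful about is the equivariance of the reordering map and making sure the weight label $\pmb\lambda$ (as opposed to $\pmb{\lambda'}$) is tracked consistently through the definition of $P_T$ — a purely notational, but easy-to-slip, point. I would therefore present the proof in two short steps: (1) cite Schur--Weyl/Proposition~\ref{th:basis-single} plus the tensor-product-of-HWV observation to get the basis upstairs; (2) invoke equivariance of the reordering isomorphism to push it down, matching notation via the $col$ bijections.
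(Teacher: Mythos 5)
Your proposal is correct and follows essentially the same route as the paper: reorder the tensor factors, observe that highest weight spaces for a product group factor as a tensor product of the single-factor highest weight spaces, and invoke Proposition~\ref{th:basis-single} in each coordinate. The paper's own proof is terser (a single display and one sentence) and is actually a bit loose about whether $A^+(\pmb{\lambda})$ or $A^+(\pmb{\lambda'})$ is the index set for $\{p_t\}$ and $\{P_T\}$; you are right to flag and track that conjugation carefully, and your explicit remark that the reordering map is $\mathrm{GL}(n)^{\times d}$-equivariant fills in what the paper leaves implicit.
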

\begin{proof}
    Rearranging the tensor product we have
    $$
        \mathrm{HWV}^{\mathrm{GL}(n)^{\times d}}_{\pmb{\lambda}} \bigotimes^m \bigotimes^d \mathbb{C}^n \cong 
        \bigotimes_{i=1}^d \mathrm{HWV}^{\mathrm{GL}(n)}_{\lambda^{(i)}} \bigotimes^m (\mathbb{C}^n)^{\otimes d}
    $$
    and by Proposition~\ref{th:basis-single}, the tensors $P_{col(T)}$ for $T \in \mathrm{SYT}(\pmb{\lambda})$ form the basis of the desired space.
\end{proof}
In particular, the dimension of this highest weight space is equal to $f^{\lambda^{(1)}} \cdot\ldots\cdot f^{\lambda^{(d)}}$, where $f^{\lambda} = \dim [\lambda]$ which can be computed by the hook-length formula.

\subsection{Symmetric and alternating subspaces}

We now obtain a spanning set of the highest weight spaces of $\bigvee^m (\mathbb{C}^n)^{\otimes d}$ and $\bigwedge^m (\mathbb{C}^n)^{\otimes d}$ for arbitrary highest weight $\pmb{\lambda}$. 
\begin{definition}\label{def:msign}
    For a word $w = w_1\ldots w_n \in \mathbb{N}^n$ a pair $(i,j)$ is called \textit{inversion} if $i < j$ and $w_i > w_j$. Denote the \textit{number of inversions} of $w$ by $\mathrm{inv}(w)$ and the \textit{word signature} as $(-1)^w := (-1)^{\mathrm{inv}(w)}$. For a $d \times m$ table $T = (t_1,\ldots,t_d)$ define the sign 
    $$(-1)^T := (-1)^{t_1}\cdots(-1)^{t_d}.$$
We choose this notation to indicate that $(-1)^T = \pm 1$ is always nonzero, while $\sgn_T(\cdot)$ can be zero.
\end{definition}

We are now ready to define the highest weight vectors.

\begin{definition}[HWV polynomials and forms]\label{def:nabla-delta1}
    For a $d$-tuple $\pmb{\lambda}$ of partitions of $m$,
    define the vectors
    \begin{align} 
        \Delta_T &:= \frac{(-1)^T}{||T||}\sum_{X \in A(\pmb{\lambda})} \sgn_T(X) \bigvee e_X
        \in \mathrm{WV}_{\pmb{\lambda}} \bigvee^m (\mathbb{C}^n)^{\otimes d}, \quad T \in A(\pmb{\lambda'}), \label{eq:def-delta}
        \\  
        \nabla_S &:= \frac{(-1)^S}{||S||}\sum_{Y \in B(\pmb{\lambda})} \sgn_S(Y) \bigwedge e_Y 
        \in \mathrm{WV}_{\pmb{\lambda}} \bigwedge^m (\mathbb{C}^n)^{\otimes d}, \quad S \in A(\pmb{\lambda'}) \label{eq:def-nabla},
    \end{align}
    where $||X||:=|\mathrm{stab}_{S_m}(X)| = |\{\pi X = X \mid \pi \in S_m\}| = \prod_{\mathbf{i}}X_{\mathbf{i}}!$ denotes the size of stabilizer of the column permutation action of $S_m$.
    We also denote $\Deltaw_T := (-1)^T \Delta_T$ and $\nablaw_S := (-1)^S \nabla_S$ the vectors without sign factors in the sums above. The reason for that choice of the sign factor is in  Lemma~\ref{lemma:colswap}.
    The stabilizer factor $||X||$ and collinear terms in the sum will be justified and treated later in Section~\ref{sec:duality}.
    We will see that these vectors are highest weight vectors, since they are images of the projectors $\mathrm{Sym}(P_S) = \Deltaw_S \cdot ||S||$ and $\mathrm{Alt}(P_S) = \nablaw_S \cdot ||S||$.
\end{definition}
\begin{remark}
    These vectors can be written in a bit more readable, yet redundant form:
    \begin{align*}
        \Deltaw_T = \sum_{\sigma: [m] \to [n]^d} \sgn_T(\sigma) \bigvee_{i=1}^m e_{\sigma(i)}, 
        \quad 
        \nablaw_S = \sum_{\sigma: [m] \to [n]^d} \sgn_S(\sigma) \bigwedge_{i=1}^m e_{\sigma(i)}.
    \end{align*}
    In these sums, many terms vanish because of the sign functions. The map $\sigma$ can be regarded as a $d \times m$ table of weight $\pmb{\lambda}$, since other weights vanish. Originally, $\Deltaw_T$ polynomials were presented in this form in \cite{widg1}; we also studied them in \cite{ayfund}. 
\end{remark}
\begin{remark}
    For $d = 1$, the situation is trivial. For $t \in A(\lambda')$ polynomial $\Delta_{t} = 0$ unless $\lambda = (k)$ and $t = 12\ldots k \in A(1^k)$, in this case we have $\Delta_t = \vee (e_1)^k$ -- the only highest weight vector of $V(k)$. Similarly, $\nabla_s = 0$ unless $\lambda = 1^k$ and $s = 1^k \in A(k)$, in this case $\nabla_s = e_1 \wedge \ldots \wedge e_k \in A(1^k)$ -- the unique highest weight vector of irreducibe $V(1^k)$.
\end{remark}

\begin{theorem}[$\Delta, \nabla$ are spanning sets of HWV]\label{th:spanning-sets}
    Let $\pmb{\lambda}$ be a $d$-tuple of partitions of $m$ with lengths at most $n$. 

    (i) For odd $d$ we have 
    \begin{align*}
        \mathrm{HWV}_{\pmb{\lambda}} \bigvee (\mathbb{C}^n)^{\otimes d} 
        &= \mathrm{span}\{ 
            \Delta_T \mid T \in B^+(\pmb{\lambda'})
        \},\\
        \mathrm{HWV}_{\pmb{\lambda}} \bigwedge (\mathbb{C}^n)^{\otimes d} 
        &= \mathrm{span}\{ 
            \nabla_S \mid S \in A^+(\pmb{\lambda'})
        \}.
    \end{align*}

    (ii) For even $d$ we have
    \begin{align*}
        \mathrm{HWV}_{\pmb{\lambda}} \bigvee (\mathbb{C}^n)^{\otimes d} 
        &= \mathrm{span}\{ 
            \Delta_T \mid T \in A^+(\pmb{\lambda'})
        \},\\
        \mathrm{HWV}_{\pmb{\lambda}} \bigwedge (\mathbb{C}^n)^{\otimes d} 
        &= \mathrm{span}\{ 
            \nabla_S \mid S \in B^+(\pmb{\lambda'})
        \}.
    \end{align*}
\end{theorem}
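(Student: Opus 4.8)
The plan is to derive the theorem by applying the symmetrization and alternation projectors to the basis $\{P_T\}_{T \in A^+(\pmb{\lambda'})}$ of $\mathrm{HWV}_{\pmb{\lambda}} \bigotimes^m (\mathbb{C}^n)^{\otimes d}$ established in Proposition~\ref{th:tensor-basis}, and then identifying the images with the vectors $\Delta_T$ and $\nabla_S$ of Definition~\ref{def:nabla-delta1}. First I would observe that $\mathrm{Sym}$ and $\mathrm{Alt}$ are (scalar multiples of) $S_m$-equivariant projectors commuting with the $\mathrm{GL}(n)^{\times d}$-action, hence they restrict to surjections
\begin{align*}
\mathrm{Sym} &: \mathrm{HWV}_{\pmb{\lambda}} \bigotimes^m (\mathbb{C}^n)^{\otimes d} \longrightarrow \mathrm{HWV}_{\pmb{\lambda}} \bigvee^m (\mathbb{C}^n)^{\otimes d}, \\
\mathrm{Alt} &: \mathrm{HWV}_{\pmb{\lambda}} \bigotimes^m (\mathbb{C}^n)^{\otimes d} \longrightarrow \mathrm{HWV}_{\pmb{\lambda}} \bigwedge^m (\mathbb{C}^n)^{\otimes d}.
\end{align*}
Surjectivity is the key point: since $\bigvee^m$ and $\bigwedge^m$ are direct summands of $\bigotimes^m$ as $\mathrm{GL}(n)^{\times d}$-modules (being images of the idempotents $\tfrac{1}{m!}\mathrm{Sym}$, $\tfrac{1}{m!}\mathrm{Alt}$), taking highest weight vectors is exact, so the highest weight space of the summand is the image of the highest weight space of the whole. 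Consequently $\mathrm{HWV}_{\pmb{\lambda}} \bigvee (\mathbb{C}^n)^{\otimes d}$ is spanned by $\{\mathrm{Sym}(P_T) : T \in A^+(\pmb{\lambda'})\}$ and similarly for $\mathrm{Alt}$.

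Next I would compute these images explicitly. From the concrete form $P_T = \sum_{S \in A(\pmb{\lambda})} \sgn_T(S)\bigotimes_{i=1}^m e_{S(i)}$, applying $\mathrm{Sym}$ term-by-term and collecting the $\bigvee e_S$ over each $S_m$-orbit gives a scalar multiple of $\Deltaw_T$; more precisely one checks $\mathrm{Sym}(P_T) = ||T||\cdot\Deltaw_T$ (as already asserted in Definition~\ref{def:nabla-delta1}), and likewise $\mathrm{Alt}(P_T) = ||T||\cdot\nablaw_S$ — here the wedge of a table with a repeated column is zero, which is exactly why only $Y \in B(\pmb{\lambda})$ survive in the definition of $\nabla$. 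Since $\Delta_T$ and $\Deltaw_T$ (resp. $\nabla_S$, $\nablaw_S$) differ only by the nonzero scalar $(-1)^T$, the spanning sets coincide: $\mathrm{HWV}_{\pmb{\lambda}}\bigvee(\mathbb{C}^n)^{\otimes d} = \mathrm{span}\{\Delta_T : T \in A^+(\pmb{\lambda'})\}$ and $\mathrm{HWV}_{\pmb{\lambda}}\bigwedge(\mathbb{C}^n)^{\otimes d} = \mathrm{span}\{\nabla_S : S \in A^+(\pmb{\lambda'})\}$.

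Finally I would account for the odd/even dichotomy, which is the one genuinely delicate point. For $\bigwedge$, a term $\bigwedge e_Y$ vanishes unless $Y$ has distinct columns, i.e. $Y \in B(\pmb{\lambda})$; for such $Y$, swapping two equal columns (which can happen when two rows of the conjugate table agree) shows via Lemma~\ref{lemma:colswap} that $\nabla_S$ is symmetric or antisymmetric under transposing two rows of $S$ according to the parity of $d$ — for even $d$ a table $S \in A^+(\pmb{\lambda'})$ with two coinciding columns forces $\nabla_S = 0$, so one may restrict the index set to $B^+(\pmb{\lambda'})$; for odd $d$ no such collapse occurs and $A^+(\pmb{\lambda'})$ is the correct index set. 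Dually, for $\bigvee$ the roles of $A^+$ and $B^+$ are exchanged. I would make this precise by invoking the sign behaviour recorded in Lemma~\ref{lemma:colswap} together with the elementary fact that $e_{\mathbf{i}} \wedge e_{\mathbf{i}} = 0$ while $e_{\mathbf{i}} \vee e_{\mathbf{i}} \ne 0$, and checking that discarding the vanishing generators does not shrink the span. The main obstacle is precisely this bookkeeping of which generators vanish and why the parity of $d$ flips the $(0,1)$ versus $\mathbb{N}$ condition between the two cases; the rest is a routine unwinding of the projector formulas.
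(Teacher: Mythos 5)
Your proposal follows the paper's argument essentially verbatim: apply $\mathrm{Sym}$ and $\mathrm{Alt}$ to the basis $\{P_T\}$ of $\mathrm{HWV}_{\pmb{\lambda}}\bigotimes^m(\mathbb{C}^n)^{\otimes d}$ from Proposition~\ref{th:tensor-basis}, use that these equivariant projectors send a spanning set of the whole highest weight space onto a spanning set of the summand's, identify the images with $\Deltaw_T$ and $\nablaw_S$, and then invoke the column-swap sign rule of Lemma~\ref{lemma:colswap} (packaged in the paper as the vanishing Lemma~\ref{lemma:vanish}) to explain the odd/even split in index sets. One slip to fix in your final paragraph: the vanishing comes from transposing two equal \emph{columns} of the index table $S$ (not rows, and not the summation variable $Y$), which gives $\nabla_S = \sgn(\pi)^{d+1}\nabla_S$ and hence $\nabla_S = 0$ exactly when $d$ is even; the rest of the bookkeeping is correct.
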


We shall first prove some lemmas. Recall that $S_m$ acts on $d \times m$ tables from $A(\pmb{\lambda})$ by permuting the columns. The following lemma is important and will be useful throughout the paper.

\begin{lemma}[Column swap rule]\label{lemma:colswap}
    Let $\pmb{\lambda}$ be a $d$-tuple of partitions of $m$ and $S, T \in A(\pmb{\lambda'})$. Then for any $\pi \in S_m$ we have:
    \begin{align*}
        \Delta_{\pi T} = \sgn(\pi)^d\Delta_T \qquad\text{and}\qquad \nabla_{\pi S} = \sgn(\pi)^{d+1}\nabla_S.
    \end{align*}
\end{lemma}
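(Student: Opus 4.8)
The plan is to prove the column swap rule by unwinding the definitions of $\Delta_T$ and $\nabla_S$ and tracking how the permutation $\pi$ acts on the two ingredients of each sum: the signature function $\sgn_T(\cdot)$ and the sign factor $(-1)^T$. Since it suffices to treat transpositions $\pi = (p\,q)$ of adjacent columns (these generate $S_m$, and both sides of each claimed identity are multiplicative in $\pi$ up to the sign character, which is itself multiplicative), I would reduce immediately to a single column transposition.

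First I would record how each piece transforms. For a single word $t \in A(\lambda')$ and its blocks $\mathrm{block}_i = t^{-1}(i)$, swapping columns $p$ and $q$ of the $d\times m$ table replaces each row $t_i$ by $(p\,q)t_i$. If $p$ and $q$ lie in different blocks of $t_i$, the set of block positions is unchanged as a set and the permutation read off within each block is only relabeled, so $\sgn_{(p\,q)t_i}(\cdot)$ is unaffected on the relevant domain; whereas if $p,q$ lie in the same block, the permutation within that block is post-composed with a transposition, contributing a factor $-1$. Simultaneously, the word signature $(-1)^{t_i} = (-1)^{\mathrm{inv}(t_i)}$ changes sign exactly when the entries in positions $p$ and $q$ of $t_i$ differ — i.e., exactly when $p,q$ lie in different blocks of $t_i$. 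This is precisely the bookkeeping the paper flags in the remark after Definition~\ref{def:nabla-delta1} (``the reason for that choice of the sign factor''): across all $d$ rows, the total change in $\sgn_T(X)$ and the total change in $(-1)^T$ combine so that $\Deltaw_{(p\,q)T}$ picks up $(-1)^{d}$ relative to a simultaneous relabeling of the summation variable $X \mapsto (p\,q)X$, while $\Delta_T = (-1)^T \Deltaw_T / \|T\|$ — and $\|(p\,q)T\| = \|T\|$ since the column multiset is unchanged — gives the stated $\sgn(\pi)^d = (-1)^d$.

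More precisely, I would substitute $X = (p\,q)X'$ in the defining sum $\Deltaw_T = \sum_{X\in A(\pmb\lambda)} \sgn_T(X)\bigvee e_X$; since $\bigvee e_X = \bigvee e_{(p\,q)X'}$ (the symmetric product is insensitive to column order) the sum becomes $\sum_{X'} \sgn_T((p\,q)X')\bigvee e_{X'}$, and then I need the identity $\sgn_T((p\,q)X) = (-1)^{c}\,\sgn_{(p\,q)T}(X)$ where $c$ is the number of rows $\ell$ in which $p,q$ lie in the same block of $t_\ell$. Combined with $(-1)^{(p\,q)T} = (-1)^{d-c}(-1)^T$ (sign flips in the $d-c$ rows where $p,q$ are in different blocks), the factors $(-1)^c$ and $(-1)^{d-c}$ multiply to $(-1)^d$, proving $\Delta_{(p\,q)T} = (-1)^d \Delta_T$. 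For $\nabla_S$ the only difference is that $\bigwedge e_{(p\,q)Y} = -\bigwedge e_Y$, contributing one extra sign, which accounts for the exponent $d+1$; everything else — the block bookkeeping, the stabilizer invariance $\|(p\,q)S\|=\|S\|$, the restriction to $B(\pmb\lambda)$ being $(p\,q)$-stable — is identical.

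The main obstacle I anticipate is verifying the key combinatorial identity $\sgn_t((p\,q)s) = (-1)^{[p,q\text{ in same block of }t]}\cdot \sgn_{(p\,q)t}(s)$ cleanly for a single word, being careful about the case analysis (same block vs.\ different blocks, and the subtlety that ``$\sgn$'' of a tuple is zero unless it is a permutation, so one must check the zero pattern is preserved too) and about the convention that $\pi_t$ is the minimal-length coset representative so that $p_t$ and hence $\sgn_t$ are well-defined. Once that single-word lemma is in hand, the $d$-fold product and the reduction from general $\pi$ to transpositions are routine.
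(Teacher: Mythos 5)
Your proof is correct and follows essentially the same route as the paper's: reduce to an adjacent column transposition, bijectively match the term indexed by $X$ in one sum to the term indexed by $\alpha X$ in the other, and observe that in each of the $d$ rows exactly one of $\sgn_{t_\ell}(\cdot)$ and $(-1)^{t_\ell}$ picks up a sign, so the coefficients differ by $(-1)^d$ (with one extra sign from the wedge for $\nabla$). Your explicit tally $c$ of same-block rows is just a more granular bookkeeping of the same observation the paper states in words.
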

\begin{proof}
    It is enough to show the result for a transposition $\alpha=(i,i+1)$ of $S_m$ for some $i\in[m-1]$. We bijectively map the term $\bigvee e_S$ of $\Delta_T$ in \eqref{eq:def-delta} to the term $\bigvee e_{\alpha S}$ of $\Delta_{\alpha T}$. Let us compare their coefficients and show that 
    $$
        (-1)^T\sgn_{T}(X) = (-1)^d(-1)^{\alpha T}\sgn_{\alpha T}(\alpha X).
    $$ 
    Each signature above is the product of signs over the rows.
    For a fixed $k$-th row $t_k$, the sign $\sgn_{\alpha t_k}(\alpha X)$ changes by $-1$ only whenever we change adjacent letters of the same block of $t_k$. On the other hand, $(-1)^{\alpha T}$ changes by $-1$ if distinct letters were swapped. Hence, the product changes by $-1$ in any case in each row, implying the total change by $(-1)^d$. The same holds for $\nabla_{\alpha S}$, but there is an extra sign change due to the property of the wedge product. Since $S_m$ is generated by transpositions of adjacent letters, the result follows.
\end{proof}

\begin{lemma}[Vanishing rule]\label{lemma:vanish}
    Let $T \in A(\pmb{\lambda'}) \backslash B(\pmb{\lambda'})$, i.e. $T$ has two equal columns. If $d$ is odd, then $\Delta_T = 0$, and if $d$ is even, then $\nabla_T = 0$.
\end{lemma}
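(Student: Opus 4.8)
The plan is to exploit the column-swap rule of Lemma~\ref{lemma:colswap} together with a sign argument. Suppose $T \in A(\pmb{\lambda'})$ has two equal columns, say columns $i$ and $j$ with $i < j$. Let $\alpha = (i,j) \in S_m$ be the transposition exchanging these two column indices. Since columns $i$ and $j$ of $T$ are identical, we have $\alpha T = T$ as a table. On the other hand, Lemma~\ref{lemma:colswap} tells us that $\Delta_{\alpha T} = \sgn(\alpha)^d \Delta_T$ and $\nabla_{\alpha S} = \sgn(\alpha)^{d+1} \nabla_S$. Since $\alpha$ is a transposition, $\sgn(\alpha) = -1$, so $\Delta_{\alpha T} = (-1)^d \Delta_T$ and $\nabla_{\alpha T} = (-1)^{d+1}\nabla_T$.

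Now combine the two facts. For odd $d$: $\Delta_T = \Delta_{\alpha T} = (-1)^d \Delta_T = -\Delta_T$, hence $2\Delta_T = 0$ and so $\Delta_T = 0$ (we are over $\mathbb{C}$). For even $d$: $\nabla_T = \nabla_{\alpha T} = (-1)^{d+1}\nabla_T = -\nabla_T$, hence $\nabla_T = 0$. This gives exactly the two claimed vanishing statements.

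The only subtlety I would want to check carefully is that $\alpha T = T$ really holds in the precise sense needed to invoke Lemma~\ref{lemma:colswap}: the lemma is stated for the action of $S_m$ permuting columns of tables in $A(\pmb{\lambda'})$, and $\Delta$, $\nabla$ are defined on such tables (not on hypermatrices modulo $S_m$). When columns $i$ and $j$ of $T$ coincide, the transposition $\alpha$ that swaps positions $i$ and $j$ fixes $T$ on the nose as an element of $A(\pmb{\lambda'})$ — every row $t_k$ of $T$ has the same letter in positions $i$ and $j$, so swapping those positions leaves each row unchanged. So $\Delta_{\alpha T}$ and $\Delta_T$ are literally the same object, and likewise for $\nabla$. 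There is no real obstacle here; the proof is a two-line consequence of the column-swap rule, and the main thing to get right is just the bookkeeping of which parity ($d$ versus $d+1$) produces the sign $-1$ in each case.
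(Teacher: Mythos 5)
Your proof is correct and is essentially the same as the paper's: both fix the equal columns, apply the column-swap rule (Lemma~\ref{lemma:colswap}) to the transposition exchanging them, and conclude from $\Delta_T = -\Delta_T$ (resp.\ $\nabla_T = -\nabla_T$) over $\mathbb{C}$. The only cosmetic difference is that the paper first moves the equal columns to be adjacent before applying a single transposition, while you apply $(i,j)$ directly, which the lemma also permits.
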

\begin{proof}
    By Lemma~\ref{lemma:colswap} we can reorder columns freely, up to a sign factor. Let equal columns be adjacent, say the columns $1$ and $2$. On the one hand, $(1,2)T = T$. 
    On the other hand, by Lemma~\ref{lemma:colswap}, $\Delta_{(1,2)T} = (-1)^{d} \Delta_T = -\Delta_T$ for odd $d$ and $\nabla_{(1,2)T} = (-1)^{d+1} \nabla_T = -\nabla_T$ for even $d$, which implies the statement.
\end{proof}

\begin{proof}[Proof of Theorem~\ref{th:spanning-sets}]
    First, let us show that $\Delta_T$ and $\nabla_S$ are indeed highest weight vectors. By symmetrization of $P_T$ and alternation of $P_S$ we obtain that 
    \begin{align*}
        \mathrm{Sym}(P_T) 
        &= \sum_{X \in A(\lambda)} \sgn_T(X)\ \mathrm{Sym}
            \left(\bigotimes e_X\right)
        = \sum_{X \in A(\lambda)} \sgn_T(X) \bigvee e_X = \Deltaw_T \cdot ||T||,
        \\
        \mathrm{Alt}(P_S) 
        &= \sum_{Y \in A(\lambda)} \sgn_S(Y) \ \ \mathrm{Alt}\left(\bigotimes e_Y\right) \ \ 
        = \sum_{Y \in B(\lambda)} \sgn_S(Y) \bigwedge e_Y = \nablaw_S \cdot ||S||.
    \end{align*} 
    By Lemma~\ref{lemma:vanish}, $\Delta_R = 0$ for odd $d$ and $\nabla_R = 0$ for even $d$ whenever $R \in A(\pmb{\lambda'}) \backslash B(\pmb{\lambda'})$. Since the family of tensors $P_T$ (or $P_S$) span the highest weight subspace of the tensor space (Proposition~\ref{th:tensor-basis}),  the projection of this family (of highest weight tensors) to the corresponding subspace of polynomials (or forms) spans the highest weight subspace of polynomials (or forms) of the corresponding weight.
\end{proof}

Note that the sets $A^+(\pmb{\lambda})$ of lattice tables and $A^{\mathrm{lex}}(\pmb{\lambda}) = d\text{-line}(\mathsf{T}_\mathbb{N}(\pmb{\lambda}))$ of lex-tables are not subsets of one another. Using Lemma~\ref{lemma:colswap} we have the following. 

\begin{corollary}
    The statement of Theorem~\ref{th:spanning-sets} is true if the sets $A^+$ and $B^+$ are replaced with $A^\mathrm{lex}$ and $B^\mathrm{lex}$, i.e. if lattice tables are replaced with lex-tables.
\end{corollary}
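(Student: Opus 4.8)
The plan is to show that the spanning statement is invariant under the operation of reordering the columns of the indexing table, since both $A^+$ and $A^{\mathrm{lex}}$ (resp. $B^+$ and $B^{\mathrm{lex}}$) are systems of coset representatives for the column-permutation action of $S_m$ on $A(\pmb{\lambda'})$ (resp. $B(\pmb{\lambda'})$). Indeed, $A^+(\pmb{\lambda'})$ picks out the lattice table in each $S_m$-orbit, while $A^{\mathrm{lex}}(\pmb{\lambda'}) = d\text{-line}(\mathsf{T}_{\mathbb{N}}(\pmb{\lambda'}))$ picks out the lexicographically minimal table in each orbit; in both cases there is exactly one representative per orbit (for lattice tables this follows from the fact that $col$ and $row$ restrict to bijections $\mathrm{SYT}(\lambda) \to A^+(\lambda')$ and that lattice words are orbit representatives under the Young subgroup action; for lex-tables it is immediate).

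The key step is to invoke Lemma~\ref{lemma:colswap}: for any $\pi \in S_m$ and any $T \in A(\pmb{\lambda'})$ we have $\Delta_{\pi T} = \sgn(\pi)^d \Delta_T$ and $\nabla_{\pi S} = \sgn(\pi)^{d+1}\nabla_S$. In particular, $\Delta_{\pi T}$ and $\Delta_T$ span the same one-dimensional subspace (and similarly for $\nabla$), so for any $\pi$,
\[
\mathrm{span}\{\Delta_T : T \in \mathcal{R}\} = \mathrm{span}\{\Delta_T : T \in \mathcal{R}'\}
\]
whenever $\mathcal{R}$ and $\mathcal{R}'$ are two sets of representatives for the same collection of $S_m$-orbits. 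Applying this with $\mathcal{R} = A^+(\pmb{\lambda'})$ or $B^+(\pmb{\lambda'})$ and $\mathcal{R}' = A^{\mathrm{lex}}(\pmb{\lambda'})$ or $B^{\mathrm{lex}}(\pmb{\lambda'})$ converts each span in Theorem~\ref{th:spanning-sets} into the corresponding span over lex-tables, for both parities of $d$ and for both the symmetric ($\Delta$, polynomial) and alternating ($\nabla$, form) cases.

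The one small subtlety to check — and the only place where anything could go wrong — is that $A^+(\pmb{\lambda'})$ and $A^{\mathrm{lex}}(\pmb{\lambda'})$ really do index the \emph{same} set of orbits, i.e. that every orbit containing a lattice table also contains a lex-table and vice versa. But this is automatic: every $S_m$-orbit in $A(\pmb{\lambda'})$ contains a unique lex-minimal element, so the lex-tables are in bijection with all orbits; and the lattice tables of weight $\pmb{\lambda'}$ are likewise a full set of orbit representatives (each orbit of columns-with-multiplicity corresponds to a unique $\mathbb{N}$-hypermatrix, hence a unique lattice reordering via the row-word of the associated SYT). Since the orbit sets coincide, the spans coincide. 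I do not expect any genuine obstacle here; the corollary is a direct bookkeeping consequence of Lemma~\ref{lemma:colswap} together with the observation that lattice tables and lex-tables are two different normal forms for the same underlying combinatorial data.
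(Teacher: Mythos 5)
The argument has a genuine gap: the central claim that $A^+(\pmb{\lambda'})$ and $A^{\mathrm{lex}}(\pmb{\lambda'})$ are both ``systems of coset representatives'' for the $S_m$-action on $A(\pmb{\lambda'})$ is false, and the corollary cannot be proved by matching orbits. The set $A^{\mathrm{lex}}(\pmb{\lambda'})$ does select one representative per orbit, but $A^+(\pmb{\lambda'}) = A^+((\lambda^{(1)})') \times \cdots \times A^+((\lambda^{(d)})')$ does not: an orbit may contain several lattice tables or none. For $d = 1$ a single orbit of weight $\lambda'$ contains $f^{\lambda'}$ lattice words but only one lex-word; for $d = 2$ with $\pmb{\lambda'} = ((1,1),(1,1))$ the orbit of $\binom{0\ 1}{1\ 0}$ contains no lattice table at all, while $\binom{1\ 0}{0\ 1}$ contains the unique lattice table $(12,12)$. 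In general $|A^+(\pmb{\lambda'})| = \prod_i f^{\lambda^{(i)}}$ is a product of hook-length numbers while $|A^{\mathrm{lex}}(\pmb{\lambda'})|$ counts contingency tables with given marginals, and these differ. So the sentence ``every orbit containing a lattice table also contains a lex-table and vice versa'' is wrong, and the appeal to ``the same set of orbits'' does not hold.

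The fix, which is what the paper's terse ``Using Lemma~\ref{lemma:colswap}'' is pointing at, is a two-sided inclusion rather than an orbit bijection. First, every $T \in A^+(\pmb{\lambda'})$ has a column-sorting $\pi$ with $\pi T \in A^{\mathrm{lex}}(\pmb{\lambda'})$, and Lemma~\ref{lemma:colswap} gives $\Delta_T = \pm\Delta_{\pi T}$; hence $\mathrm{span}\{\Delta_T : T \in A^+(\pmb{\lambda'})\} \subseteq \mathrm{span}\{\Delta_T : T \in A^{\mathrm{lex}}(\pmb{\lambda'})\}$, and the former equals $\mathrm{HWV}_{\pmb{\lambda}}\bigvee V$ by Theorem~\ref{th:spanning-sets}. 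Second, every $\Delta_T$ with $T$ of weight $\pmb{\lambda'}$ lies in $\mathrm{HWV}_{\pmb{\lambda}}\bigvee V$ (it is the symmetrization of a highest weight tensor $P_T$), so the reverse inclusion $\mathrm{span}\{\Delta_T : T \in A^{\mathrm{lex}}(\pmb{\lambda'})\} \subseteq \mathrm{HWV}_{\pmb{\lambda}}\bigvee V$ also holds, giving equality. Note the asymmetry: you do not need each lex-table to come from a lattice table, only that the lex-span is sandwiched. The same reasoning, with $B$ in place of $A$ and $\nabla$ in place of $\Delta$, handles the remaining three cases.
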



\subsection{Some useful properties}

We now discuss several properties of these highest weight vectors and forms. 

\subsubsection{Internal symmetries}
Some pairs of tables produce the same polynomial or form, up to a constant. This happens due to the existence of some internal symmetries. Let us show a straightforward one.

\begin{proposition}[Slice symmetry]\label{prop:slicesym}
    Let $\pmb{\lambda}$ be a $d$-tuple of partitions of $m$ and $1 < i < j \le \ell(\lambda^{(1)})$ be two rows of its first partition. Fix $T \in A(\pmb{\lambda})$ and let $T'$ be the table obtained by swapping the letters $i$ and $j$ 
    in the first row, i.e. relabelling of blocks. Then the weight of $T'$ is $((i,j)\lambda^{(1)},\lambda^{(2)},\ldots)$, but the vectors
    $$
        \Delta_T = (-1)^{\lambda_i^{(1)} \cdot \lambda_j^{(1)}}\Delta_{T'}, \qquad \nabla_T = (-1)^{\lambda_i^{(1)} \cdot \lambda_j^{(1)}}\nabla_{T'},
    $$
    are still highest weight vectors of weight $\pmb{\lambda}$. 
\end{proposition}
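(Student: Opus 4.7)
The plan is to unfold the definitions \eqref{eq:def-delta} and \eqref{eq:def-nabla} and compare $\Delta_T$ with $\Delta_{T'}$ (and $\nabla_T$ with $\nabla_{T'}$) term-by-term in their standard basis expansions. Since $T'$ is obtained from $T$ by relabelling two blocks in the first row, the multiset of block sizes in each row of $T'$ coincides with that of $T$. Hence the support of $\sgn_{T'}(\cdot)$ agrees with that of $\sgn_T(\cdot)$, so both sums range over the same index set. Likewise $\|T'\| = \|T\|$, since the hypermatrix entries of $T'$ form the same multiset as those of $T$ (the slices indexed by $i$ and $j$ in the first direction are simply interchanged).

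The next step is to show that the signature itself is unaffected: $\sgn_{T'}(X) = \sgn_T(X)$ for every admissible $X$. This follows because $\sgn_{t_1}(s_1) = \prod_k \sgn(s_1(B_k))$ is a product indexed by the letters of $t_1$, and relabeling the blocks merely reorders the factors; by commutativity of multiplication the product value is unchanged, while the other rows of $T$ are untouched. Combining the previous two observations, the ratio $\Delta_T / \Delta_{T'}$ collapses to the ratio of overall sign prefactors $(-1)^T/(-1)^{T'}$, which reduces (since only the first row differs) to $(-1)^{\mathrm{inv}(t_1) - \mathrm{inv}(t_1')}$.

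The remaining task is therefore to verify $\mathrm{inv}(t_1) - \mathrm{inv}(t_1') \equiv \lambda^{(1)}_i \cdot \lambda^{(1)}_j \pmod 2$. I would handle the adjacent case $j = i+1$ first: there the only pairs whose inversion status changes lie one in $B_i$ and one in $B_j$, and there are exactly $|B_i|\cdot|B_j| = \lambda^{(1)}_i \lambda^{(1)}_j$ such pairs, each flipping its inversion status. Hence the parity change is exactly $\lambda^{(1)}_i \lambda^{(1)}_j$. The general non-adjacent case would then be reduced to the adjacent one by decomposing the transposition $(i,j) \in S_{\ell(\lambda^{(1)})}$ as a sequence of adjacent transpositions and iterating, so that the final sign telescopes to the claimed expression.

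The main obstacle I expect is the sign bookkeeping in the non-adjacent case: in addition to the direct $B_i$--$B_j$ mixed pairs, pairs involving intermediate blocks $B_k$ (for $i < k < j$) also change inversion status and must be carefully tracked so that their contributions cancel in parity. The $\nabla$ version runs in complete parallel: $\bigvee e_X$ is replaced by $\bigwedge e_X$, and since both $\nabla_T$ and $\nabla_{T'}$ are summed over the same set $Y \in B(\pmb{\lambda'})$ with the same column-ordering convention, the antisymmetry of the wedge product introduces no additional sign contribution beyond what is already captured by the $(-1)^T/(-1)^{T'}$ prefactor.
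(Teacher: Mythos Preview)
Your approach is exactly the paper's (very brief) proof: observe that relabelling blocks leaves $\|T\|$ and each $\sgn_T(X)$ unchanged, so everything reduces to the ratio $(-1)^{t_1}/(-1)^{t_1'}$. Your treatment of the adjacent case $j=i+1$ is correct.

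The gap is in the non-adjacent case, and it is not a bookkeeping issue you can overcome: the intermediate contributions do \emph{not} cancel in parity, and the sign formula $(-1)^{\lambda_i^{(1)}\lambda_j^{(1)}}$ stated in the proposition is wrong in general. When you swap letters $i$ and $j$ in $t_1$, a pair of positions changes inversion status exactly when one carries $i$ and the other $j$, \emph{or} when one carries $i$ (resp.\ $j$) and the other carries some $k$ with $i<k<j$. Thus
\[
\mathrm{inv}(t_1)-\mathrm{inv}(t_1')\;\equiv\;\lambda_i^{(1)}\lambda_j^{(1)}\;+\;(\lambda_i^{(1)}+\lambda_j^{(1)})\sum_{i<k<j}\lambda_k^{(1)}\pmod 2,
\]
and the second term need not vanish. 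For a concrete counterexample take $d=1$, $\lambda^{(1)}=(3,2,1,1)$, $t_1=1112234$, $i=2$, $j=4$: then $t_1'=1114432$ has $5$ inversions, so the true sign is $-1$, whereas $\lambda_2^{(1)}\lambda_4^{(1)}=2$ predicts $+1$. Your telescoping via adjacent transpositions confirms this: writing $(i,j)$ as a product of $2(j-i)-1$ adjacent swaps and applying your adjacent formula at each step yields precisely the expression above, not $\lambda_i^{(1)}\lambda_j^{(1)}$ alone. So the proposition (and the paper's one-line proof) is only correct as stated when $j=i+1$ or when $\lambda_i^{(1)}+\lambda_j^{(1)}$ is even; fortunately the paper's later uses (Proposition~\ref{prop:ring} and the index maps of \S\ref{sec:isomorphism}) need only that $\Delta_T$ and $\Delta_{T'}$ agree up to a sign, not the precise value of that sign.
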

\begin{proof}
    By definition of polynomials and forms, the numbering of the blocks only affect the signature $(-1)^T$. Since $\lambda_i^{(1)} \cdot \lambda_j^{(1)}$ swaps are performed, the signature changes by this amount.
\end{proof}

\begin{remark}
    In terms of hypermatrices, the swap of block labels means slice swaps. Thus, when we consider index set of the highest weight vectors, we may narrow it to the orbits of hypermatrices $A^\lex_d$ and $B^\lex_d$, by considering only orbits under the swaps of certain slices.
\end{remark}

\subsubsection{Horizontal concatenation}
If we have two tables $P,Q \in A_d$, it is natural to consider some operations on the tables. 

\begin{definition}[Horizontal concatenation]
    Let $\lambda \vdash m$ be a partition. For a positive integer sequence $\alpha$, which is a permutation of $\lambda$, let $w \in A(\alpha)$ be a word of weight $\alpha$. By $w + x$ for $x\in \mathbb{N}$ denote the word with each $w_i$ increased by $x$, i.e. $(w + x)_i = w_i + x$.

    For a pair of words $(p, q) \in A(\alpha)\times A(\beta)$ of weights $\alpha = (\alpha_1,\ldots,\alpha_{\ell})$ and $\beta = (\beta_1,\ldots,\beta_{r})$ (not necessarily partitions), define the new word 
    $$
        p\oplus q := p.(q + \ell) \in A(\alpha.\beta)
    $$
    where `.' denotes the concatenation of words. 
    For example, if 
    $
        p = 11122,$  
        $q = 122333,$ then 
        $p\oplus q = p.(q+2) = 11122\mathbf{344555},
    $
    where $\alpha = (3,2), \beta = (1,2,3)$ and the resulting word has weight $(3,2,1,2,3)$. If $\alpha$ and $\beta$ are partitions, then the resulting weight is $w(p \oplus q) = \alpha \uplus \beta := (\alpha' + \beta')'$, i.e. vertical sum.

    Similarly, we extend this operation to $d$-tables. Fix $d$ and let $\pmb{\lambda}$ and $\pmb{\mu}$ be $d$-tuples of partitions of $m_1$ and $m_2$, respectively. Let $P = (p_1,\ldots p_d) \in A(\pmb{\lambda})$ and $Q = (q_1,\ldots,q_d) \in A_d(\pmb{\mu})$ be $d \times m_1$ and $d \times m_2$ tables. Define $d \times (m_1 + m_2)$ table $P \oplus Q \in A(\pmb{\lambda} \uplus \pmb{\mu})$ by setting $i$-th row $(P \oplus Q)_i = p_i \oplus q_i$. For example, for $d = 2$ we have:
    $$
        P = \begin{pmatrix}
            11122\\
            11122
        \end{pmatrix}, 
        Q = \begin{pmatrix}
            111123\\
            112323
        \end{pmatrix},
        P \oplus Q = \begin{pmatrix}
            11122\textbf{333345}\\
            11122\textbf{334545}
        \end{pmatrix},
    $$
    which in terms of matrices corresponds to
    $$
    P = \begin{tabular}{ | c | c | } 
        \hline
        3 & 0 \\ 
        \hline
        0 & 2 \\
        \hline
    \end{tabular}, \quad
    Q = \begin{tabular}{ | c | c| c | } 
        \hline
        2 & 1 & 1 \\ 
        \hline
        0 & 1 & 0 \\
        \hline
        0 & 0 & 1 \\
        \hline
    \end{tabular}, \quad
    P \oplus Q = \begin{tabular}{ |c|c|c|c|c| } 
        \hline
        3 & 0 & 0 & 0 & 0\\
        \hline
        0 & 2 & 0 & 0 & 0\\
        \hline
        0 & 0 & \bf{2} & \bf{1} & \bf{1} \\ 
        \hline
        0 & 0 & \bf{0} & \bf{1} & \bf{0} \\
        \hline
        0 & 0 & \bf{0} & \bf{0} & \bf{1} \\
        \hline
    \end{tabular},
    $$
    i.e. block-diagonal concatenation of (hyper)matrices.
\end{definition}

\begin{proposition}[Ring product rule]\label{prop:ring}
    For a $d \times m_1$ table $P \in A(\pmb{\lambda'})$ and a $d \times m_2$ table $Q \in A(\pmb{\mu'})$ as above, we have 
    \begin{align*}
        \Delta_{P} \vee \Delta_Q 
            = \Delta_{P \oplus Q} 
            \in \mathrm{HWV}_{\pmb{\lambda} + \pmb{\mu}} \bigvee^{m_1 + m_2} (\mathbb{C}^n)^{\otimes d}
        \\
        \nabla_P \wedge \nabla_Q 
            = \nabla_{P \oplus Q} 
            \in \mathrm{HWV}_{\pmb{\lambda} + \pmb{\mu}} \bigwedge^{m_1 + m_2} (\mathbb{C}^n)^{\otimes d},
    \end{align*}
    where $n \ge \max(\ell(\lambda), \ell(\mu))$.
\end{proposition}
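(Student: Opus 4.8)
The plan is to prove the identity $\Delta_P \vee \Delta_Q = \Delta_{P\oplus Q}$ directly by unwinding the definition \eqref{eq:def-delta} and matching coefficients of the standard basis vectors $\bigvee e_X$ on both sides, and then to obtain the $\nabla$ statement by the same bookkeeping with signs from the wedge product. First I would recall that $P\oplus Q$ is built by relabelling the blocks of the second table upward by $\ell = \ell(\lambda^{(i)})$ in each row $i$, so the block structure of $P\oplus Q$ is precisely the disjoint union of the block structure of $P$ on the first $m_1$ columns and of $Q$ (relabelled) on the last $m_2$ columns. Consequently, for a table $X$ of weight $\pmb{\lambda}+\pmb{\mu} = \pmb\lambda\uplus\pmb\mu$, the signature $\sgn_{P\oplus Q}(X)$ factors: writing $X = X_1 \sqcup X_2$ where $X_1$ collects the columns whose first-row... more precisely whose letters lie in the ranges coming from $\pmb\lambda$ versus $\pmb\mu$, one gets $\sgn_{P\oplus Q}(X) = \sgn_P(X_1)\,\sgn_Q(X_2)$ whenever $X$ splits compatibly, and $0$ otherwise (since a block of $P\oplus Q$ must be mapped by $X$ to a permutation of its index set, forcing the support to split). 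This is the combinatorial heart of the argument.

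Next I would compute the left-hand side. By definition $\Delta_P = \frac{(-1)^P}{\|P\|}\sum_{X_1\in A(\pmb\lambda)}\sgn_P(X_1)\bigvee e_{X_1}$ and similarly for $\Delta_Q$; applying the product $a\vee b = \mathrm{Sym}(a\otimes b)$ and using that $\bigvee e_{X_1}\vee\bigvee e_{X_2} = \bigvee e_{X_1\oplus X_2}$ at the level of symmetric products (concatenation of the two multisets of basis tensors), we get a sum over pairs $(X_1,X_2)$. Re-indexing by $X = X_1\oplus X_2$ (with the understanding that many $X$ of weight $\pmb\lambda\uplus\pmb\mu$ arise this way, and for those the splitting is unique because the letter sets are disjoint) and using the factorization of $\sgn$ from the previous step, the sum collapses to $\frac{(-1)^P(-1)^Q}{\|P\|\,\|Q\|}\sum_X \sgn_{P\oplus Q}(X)\,(\text{multiplicity})\,\bigvee e_X$. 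I then need to check three compatibilities: that $(-1)^{P\oplus Q} = (-1)^P(-1)^Q$ (the inversions of each row of $P\oplus Q$ are exactly those within the $P$-part plus those within the $Q$-part, since every $P$-letter is smaller than every shifted $Q$-letter, so there are no cross inversions); that the stabilizer sizes satisfy $\|P\oplus Q\| = \|P\|\cdot\|Q\|$ as hypermatrices, because $P\oplus Q$ is block-diagonal and no column of the $P$-block equals a column of the $Q$-block; and that the combinatorial multiplicity from the symmetrization together with the $1/\|P\|,1/\|Q\|$ normalizations reproduces exactly $1/\|P\oplus Q\|$ times $\sum_{X\in A(\pmb\lambda\uplus\pmb\mu)}\sgn_{P\oplus Q}(X)$ — i.e. the normalization constants are chosen precisely so that this product rule holds, which is the remark in Definition~\ref{def:nabla-delta1} that the stabilizer factor ``will be justified later''.

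For the $\nabla$ statement the argument is identical except that $a\wedge b = \mathrm{Alt}(a\otimes b)$ introduces a sign $(-1)^{|X_1|\,|X_2|}$ when interleaving, but this is exactly absorbed: on a $(0,1)$-table (or after passing to $B(\pmb\lambda)$, where repeated columns contribute $0$ to $\nabla$ anyway) the wedge $\bigwedge e_{X_1}\wedge\bigwedge e_{X_2}$ relates to $\bigwedge e_{X_1\oplus X_2}$ by a shuffle sign, and because the letter sets are disjoint the relevant reordering of basis tensors is exactly the one accounted for; I would check that the resulting sign, combined with $(-1)^P(-1)^Q$, gives $(-1)^{P\oplus Q}$ with no leftover. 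Finally I would remark that the resulting vector lies in $\mathrm{HWV}_{\pmb\lambda+\pmb\mu}$ because a $\vee$- or $\wedge$-product of highest weight vectors (for the diagonal $\mathrm{GL}(n)^{\times d}$ action) is again a highest weight vector, with weight the sum. The main obstacle I anticipate is the careful bookkeeping in the re-indexing step: verifying that the symmetrization (resp. alternation) multiplicities combined with the three factorizations $(-1)^{P\oplus Q}=(-1)^P(-1)^Q$, $\|P\oplus Q\|=\|P\|\|Q\|$, and $\sgn_{P\oplus Q}=\sgn_P\cdot\sgn_Q$ match exactly, and in particular tracking that no spurious cross-terms (tables $X$ whose support mixes the two letter ranges within a single block) survive — these must be shown to have $\sgn_{P\oplus Q}(X)=0$.
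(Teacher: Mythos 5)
Your plan is essentially the paper's proof, run in the opposite direction: the paper expands $\Delta_{P\oplus Q}$ in the standard basis and factors the sum, whereas you expand $\Delta_P\vee\Delta_Q$ and recombine; both hinge on the same three facts ($\sgn_{P\oplus Q}$ factors, $(-1)^{P\oplus Q}=(-1)^P(-1)^Q$, $\|P\oplus Q\|=\|P\|\,\|Q\|$), so the approach is correct and identifies the right ingredients.

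Two details in your write-up are muddled, though neither derails the argument. First, you appeal to ``letter sets are disjoint'' both to justify the unique splitting $R\mapsto(R_1,R_2)$ and to say no spurious cross-terms survive. This is not the right reason: the summation tables $R_1\in A(\pmb\lambda)$ and $R_2\in A(\pmb\mu)$ have \emph{overlapping} letter ranges (both use letters $1,2,\dots$). What is disjoint by construction is the \emph{block label} range of $P\oplus Q$, whose $P$-blocks sit in positions $\{1,\dots,m_1\}$ and $Q$-blocks in $\{m_1+1,\dots,m_1+m_2\}$. The splitting of a table $R\in A(\pmb\lambda+\pmb\mu)$ into $R_1=(R(1),\dots,R(m_1))$ and $R_2=(R(m_1+1),\dots,R(m_1+m_2))$ is therefore by \emph{position}, and it is the vanishing of $\sgn_P(R_1)$ when $R_1$ has the wrong weight (not any disjointness of letters) that kills cross-terms. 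Second, in the $\nabla$ case you worry about a shuffle sign $(-1)^{|X_1||X_2|}$ when forming $\bigwedge e_{X_1}\wedge\bigwedge e_{X_2}$; there is none, because you are concatenating $X_1$ before $X_2$ and never reordering the tensor factors: $\bigwedge e_{X_1}\wedge\bigwedge e_{X_2}=\bigwedge e_{X_1.X_2}$ on the nose, so only the inversion count $(-1)^{P\oplus Q}=(-1)^P(-1)^Q$ needs checking (cross-pairs are never inversions since every $P$-letter is strictly smaller than every shifted $Q$-letter). Once those two points are cleaned up, your proof is the same as the paper's. One further simplification: your step about ``combinatorial multiplicity from the symmetrization'' is unnecessary, since both sides of the identity are already written as sums over ordered tables (not hypermatrix orbits), so no multiplicity counting arises; the three factorizations alone match the coefficients termwise.
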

\begin{proof}
    First note that the vectors $\Delta_{P \oplus Q}$ and $\nabla_{P \oplus Q}$ are indeed highest weight vectors of weight $\pmb{\lambda} + \pmb{\mu}$ by Proposition~\ref{prop:slicesym}.
    For $(\Lambda,{\textstyle\bigDiamond}) = (\Delta, \vee)$ or $(\Lambda,{\textstyle\bigDiamond}) = (\nabla, \wedge)$, by definition we have:
    $$
        \Lambda_{P \oplus Q} = (-1)^{P\oplus Q}\sum_{R \in A(\pmb{\lambda}+\pmb{\mu})} \sgn_{P \oplus Q}(R) {\textstyle\bigDiamond}_{i=1}^{m_1 + m_2} e_{R(i)},
    $$
    Note that $$\sgn_{P\oplus Q}(R) = \sgn_P(R(1),\ldots,R(m_1)) \cdot \sgn_Q(R(m_1 + 1),\ldots,R(m_1 + m_2)),$$ 
    since the first $m_1$ columns are independent of the rest $m_2$ columns.
    Thus, substituting $R_1 := (R(1),\ldots,R(m_1))$ and $R_2 := (R(m_1+1),\ldots,R(m_1+m_2))$ and taking out the brackets we obtain:
    $$
    \Lambda_{P \oplus Q} = (-1)^{P\oplus Q} (-1)^P (-1)^Q \Lambda_P {\textstyle\bigDiamond} \Lambda_Q.
    $$
    Since, $(-1)^{P\oplus Q} = (-1)^P (-1)^Q$, the statement follows.
\end{proof}

Let us show how this implies the well-known {\it semigroup} property of Kronecker coefficients for general $d$, which was proved in \cite{chm} (for $d = 3$).
\begin{corollary}[Semigroup property]
    If $g(\pmb{\lambda}) \cdot g(\pmb{\mu}) > 0$, then $g(\pmb{\lambda} + \pmb{\mu}) \ge \max(g(\pmb{\lambda}), g(\pmb{\mu}))$.
\end{corollary}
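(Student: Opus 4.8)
The plan is to deduce the semigroup property directly from the Ring Product Rule (Proposition~\ref{prop:ring}) together with the dimension count $\dim \mathrm{HWV}_{\pmb{\lambda}} \bigvee (\mathbb{C}^n)^{\otimes d} = g(\pmb{\lambda})$. Assume $g(\pmb{\lambda}) \cdot g(\pmb{\mu}) > 0$, and choose $n$ larger than the lengths of all partitions involved, so that the highest weight spaces of the symmetric algebra have the expected dimensions $g(\pmb{\lambda})$, $g(\pmb{\mu})$ and $g(\pmb{\lambda} + \pmb{\mu})$ respectively. By Theorem~\ref{th:spanning-sets}, $\mathrm{HWV}_{\pmb{\lambda}} \bigvee (\mathbb{C}^n)^{\otimes d}$ is spanned by the vectors $\Delta_P$ for $P$ ranging over lattice (or lex) tables of weight $\pmb{\lambda'}$, and similarly for $\pmb{\mu}$; since these spaces are nonzero, there are tables $P, Q$ with $\Delta_P \ne 0$, $\Delta_Q \ne 0$.

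First I would fix $\pmb{\mu}$ and consider the linear map $\bigvee \Delta_Q : \mathrm{HWV}_{\pmb{\lambda}} \bigvee (\mathbb{C}^n)^{\otimes d} \to \mathrm{HWV}_{\pmb{\lambda} + \pmb{\mu}} \bigvee (\mathbb{C}^n)^{\otimes d}$, $v \mapsto v \vee \Delta_Q$, which is well defined because wedging/multiplying by a fixed highest weight vector of weight $\pmb{\mu}$ lands in the highest weight space of weight $\pmb{\lambda}+\pmb{\mu}$ (the product of highest weight vectors is a highest weight vector, by the diagonal action, as used throughout the paper). The key point is to show this map is injective. For this I would use the grading of the symmetric algebra: multiplication by a nonzero element $\Delta_Q$ in the polynomial ring $\mathbb{C}[(\mathbb{C}^n)^{\otimes d}] \cong \bigvee (\mathbb{C}^n)^{\otimes d}$, an integral domain, has no kernel, so $v \vee \Delta_Q = 0$ forces $v = 0$. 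Hence $\bigvee \Delta_Q$ is injective and $g(\pmb{\lambda}) = \dim \mathrm{HWV}_{\pmb{\lambda}} \bigvee \le \dim \mathrm{HWV}_{\pmb{\lambda}+\pmb{\mu}} \bigvee = g(\pmb{\lambda}+\pmb{\mu})$. Symmetrically, multiplication by $\Delta_P$ gives $g(\pmb{\mu}) \le g(\pmb{\lambda}+\pmb{\mu})$, and taking the maximum yields the claim. (Note that Proposition~\ref{prop:ring} is what guarantees the image of a spanning set lands among the $\Delta$'s of the combined weight, but for this argument all I really need is that multiplication by $\Delta_Q$ maps the $\pmb{\lambda}$ highest weight space into the $\pmb{\lambda}+\pmb{\mu}$ one, which is immediate.)

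The main obstacle is ensuring that the ambient dimension $n$ is large enough that \emph{all three} highest weight spaces realize their full Kronecker coefficients simultaneously — the stated dimension formula $\dim \mathrm{HWV}_{\pmb{\lambda}} \bigvee (\mathbb{C}^n)^{\otimes d} = g(\pmb{\lambda})$ holds once $\ell(\lambda^{(i)}) \le n$ for all $i$, so I would simply take $n \ge \max_i \max(\ell(\lambda^{(i)}), \ell(\mu^{(i)}))$, noting $\ell((\lambda+\mu)^{(i)}) \le \max(\ell(\lambda^{(i)}), \ell(\mu^{(i)}))$; the Kronecker coefficients themselves are independent of $n$ in this stable range. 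A secondary point to be careful about is that $\Delta_P, \Delta_Q$ are genuinely nonzero as polynomials (not merely nonzero formal expressions): this is exactly the content of $\mathrm{HWV}_{\pmb{\lambda}} \bigvee (\mathbb{C}^n)^{\otimes d} \ne 0$ from $g(\pmb{\lambda}) > 0$ combined with Theorem~\ref{th:spanning-sets}, since a spanning set of a nonzero space must contain a nonzero vector. With these two points in hand the argument is a short application of the domain property of the polynomial ring.
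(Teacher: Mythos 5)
Your proof is correct and follows essentially the same route as the paper: both rely on picking a nonzero $\Delta_Q$, multiplying into the higher-weight highest weight space, and invoking the integral domain property of the polynomial ring to preserve linear independence. The paper phrases this as mapping a basis $\{\Delta_{P_i}\}$ to a linearly independent set $\{\Delta_{P_i \oplus Q}\}$ via Proposition~\ref{prop:ring}, while you phrase it as injectivity of the map $v \mapsto v \vee \Delta_Q$; these are the same argument, though your observation that the ring product rule is not strictly needed (only that products of HWVs are HWVs) is a small and valid simplification, and your explicit care about choosing $n$ large enough is a useful detail the paper leaves implicit.
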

\begin{proof}
    Let $a = g(\pmb{\lambda}) \ge g(\pmb{\mu})$. By Theorem~\ref{th:spanning-sets} there is a basis 
    $$
    \mathrm{span}\{\Delta_{P_1},\ldots, \Delta_{P_a}\}
    = \mathrm{HWV}_{\pmb{\lambda}} \bigvee^{m_1} (\mathbb{C}^{n})^{\otimes d}
    $$
    for $P_i \in B(\pmb{\lambda'})$. Let $Q \in B(\pmb{\mu'})$ be such that $\Delta_Q \neq 0$. Then
    $$
    \mathrm{span}\{\Delta_{P_1 \oplus Q},\ldots, \Delta_{P_a \oplus Q}\} 
    \subseteq 
    \mathrm{HWV}_{\pmb{\lambda} + \pmb{\mu}}
        \bigvee^{m_1+m_2} 
        (\mathbb{C}^{n})^{\otimes d}
    $$
    and the polynomials $\{\Delta_{P_i\oplus Q}\}$ are linearly independent, since the polynomial ring is an integral domain.
\end{proof}


\section{Duality of highest weight vectors}\label{sec:duality}

In the section we prove duality for highest weight polynomials and forms. 

\vspace{0.5em}


For $S \in A(\pmb{\lambda})$ denote by $\mathrm{stab}(S) := \{ \pi \in S_m: \pi S = S \}$ the stabilizer w.r.t. column permutation and by $||S|| := |\mathrm{stab}(S)|$ its size.
For a partition $\lambda = (\lambda_1, \ldots, \lambda_{\ell})$ we also define the sign $(-1)^{\lambda} := (-1)^{w}$ where $w = \underbrace{12\ldots \lambda_1}_{1\text{-st block}}\ldots \underbrace{12\ldots\lambda_{\ell}}_{\ell\text{-th block}}$, and $(-1)^{\pmb{\lambda}} := \prod_{i =1}^{d} (-1)^{\lambda^{(i)}}$ for $\pmb{\lambda} = (\lambda^{(1)}, \ldots, \lambda^{(d)})$. 

\begin{theorem}
\label{th:duality2}
    Let $\pmb{\lambda}$ be a $d$-tuple of partitions of $m$. 
    
    (i) For odd $d$ we have 
    \begin{align*}
        \Delta_{T} &= (-1)^{\pmb{\lambda}} \sum_{S \in A^\lex(\pmb{\lambda})} \langle \nabla_S, \bigwedge e_{T} \rangle \bigvee e_S, \quad T \in B(\pmb{\lambda'}), \\
        \nabla_{S} &= (-1)^{\pmb{\lambda}} \sum_{T \in B^\lex(\pmb{\lambda'})} \langle \Delta_T, \bigvee e_{S} \rangle \bigwedge e_T, \quad S \in A(\pmb{\lambda}),
    \end{align*}
    
    (ii) For even $d$ we have 
    \begin{align*}
        \Delta_{S} &= (-1)^{\pmb{\lambda}} \sum_{S' \in A(\pmb{\lambda'})} 
            \langle \Delta_{S'}, \bigvee e_S \rangle \bigvee e_{S'}, \quad S \in A(\pmb{\lambda}), \\
        \nabla_{T} &= (-1)^{\pmb{\lambda}} \sum_{T' \in B(\pmb{\lambda'})} 
            \langle \nabla_{T'}, \bigwedge e_T \rangle \bigwedge e_{T'}, \quad T \in B(\pmb{\lambda}).
    \end{align*}    
\end{theorem}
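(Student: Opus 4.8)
The plan is to reduce everything to a single statement about the coefficient matrices $a(T,S) = \langle \Delta_T, \bigvee e_S\rangle$ and $b(S,T) = \langle \nabla_S, \bigwedge e_T\rangle$, and then exploit a symmetry of these coefficients. First I would write out both sides of each claimed identity in the standard bases $\{\bigvee e_S\}$ and $\{\bigwedge e_T\}$ of the weight spaces. For part (i), odd $d$: the left-hand side $\Delta_T$ has $\bigvee e_S$-coefficient $a(T,S)$, while the right-hand side has $\bigvee e_S$-coefficient $(-1)^{\pmb\lambda}\langle\nabla_S,\bigwedge e_T\rangle = (-1)^{\pmb\lambda} b(S,T)$ (after checking that the lex-table indexing of the sum matches, which follows from the fact that $\bigvee e_S$ depends only on the $S_m$-orbit of $S$, i.e. on its hypermatrix, and that both $\Delta$ and $\nabla$ transform by $\sgn(\pi)^d$ resp.\ $\sgn(\pi)^{d+1}$ under column permutations by Lemma~\ref{lemma:colswap} — for odd $d$ these are $\sgn(\pi)$ and $1$, so $\nabla$ is genuinely a function of the $(0,1)$-hypermatrix while $\Delta$ picks up a sign that must be tracked). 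So the whole theorem is equivalent to the coefficient identities
\[
a(T,S) = (-1)^{\pmb\lambda}\, b(S,T) \quad (\text{odd }d), \qquad
a(T,S) = (-1)^{\pmb\lambda}\, a(S,T), \quad b(S,T) = (-1)^{\pmb\lambda}\, b(S,T) \quad (\text{even }d),
\]
valid for $T \in B(\pmb\lambda')$, $S \in A(\pmb\lambda)$ (and the transposed ranges), which is exactly the $a = \pm b$ / $a = \pm a$ assertion advertised in the introduction. I would state this as a separate lemma and make it the technical core.

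Next I would compute $a(T,S)$ and $b(S,T)$ explicitly. From Definition~\ref{def:nabla-delta1}, $a(T,S) = \frac{(-1)^T}{\|T\|}\sum_{X}\sgn_T(X)\langle \bigvee e_X, \bigvee e_S\rangle$; since $\langle \bigvee e_X, \bigvee e_S\rangle$ is (up to the stabilizer normalization) the indicator that $X$ and $S$ have the same hypermatrix, summing over the $S_m$-orbit of $S$ collapses this to a signed count: $a(T,S) = \pm\sum_{\pi}\sgn_T(\pi S)$ over a transversal, equivalently a signed sum over tuples of words $X$ with $\mathrm{hyp}(X) = \mathrm{hyp}(S)$ of the products of block-permutation signatures determined by $T$. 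Unwinding $\sgn_T$ row by row, $a(T,S)$ becomes a product-free signed sum over the set of $d$-tuples of bijections (one per row) compatible with the block structures of the rows of $T$ and the slice structures of $S$ — i.e.\ a signed count of elements in an intersection of double cosets of Young subgroups $Y_{\pmb\lambda} \backslash S_m / Y_{\pmb\lambda'}$, as the paper's introduction indicates. The same computation for $b(S,T)$ gives an identical combinatorial object except that the alternating (wedge) projection contributes one extra global sign $\sgn$ of the column-ordering permutation, which is where the parity of $d$ enters: for odd $d$ this extra sign, combined with the $\sgn(\pi)^d$ discrepancy between $\Delta$ and $\nabla$, is exactly absorbed, giving $a(T,S) = \pm b(S,T)$; for even $d$ the roles of the two index directions swap and one instead gets the transposition symmetry $a(T,S) = \pm a(S,T)$.

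The key step — and the main obstacle — is proving that the combinatorial object computing $a(T,S)$ is symmetric in $T$ and $S$ in the required sense: that the signed count over the double-coset intersection $Y_{\text{rows of }T} \cap (\text{something indexed by }S)$ is invariant (up to the global sign $(-1)^{\pmb\lambda}$) under interchanging the roles of $T$ and $S$. I would prove this by exhibiting an explicit sign-respecting involution or bijection on the indexing set: given a tuple of row-bijections realizing a term of $a(T,S)$, one "transposes" it — reading the $d\times m$ data by columns instead of rows, which turns the block structure of the rows of $T$ into the slice structure of $S$ and vice versa — and checks that the product of block signatures is preserved up to the universal sign $(-1)^{\pmb\lambda} = \prod_i (-1)^{\lambda^{(i)}}$, where each $(-1)^{\lambda^{(i)}}$ is precisely the sign correction needed to pass from "signature of the word $1^{\lambda_1}2^{\lambda_2}\cdots$" to "signature of the identity column-ordering." Establishing that this sign is globally constant (depends only on $\pmb\lambda$, not on $T$ or $S$) is the delicate bookkeeping; I expect it to follow from Lemma~\ref{lemma:colswap} plus Proposition~\ref{prop:slicesym} (slice symmetry), which together show that changing $T$ or $S$ within a fixed weight only rescales $\Delta$, $\nabla$ by explicit signs that cancel on both sides. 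Once the lemma $a(T,S) = (-1)^{\pmb\lambda} b(S,T)$ (and its even-$d$ analogue) is in hand, each of the four displayed identities in Theorem~\ref{th:duality2} follows by reading off coefficients, using Theorem~\ref{th:spanning-sets} to know the relevant spanning set (so the sums range over $A^{\mathrm{lex}}$ or $B^{\mathrm{lex}}$ as stated) and Lemma~\ref{lemma:vanish} to discard the vanishing terms.
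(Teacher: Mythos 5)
Your high-level plan matches the paper's almost exactly: reduce the four displayed identities to the coefficient relations $a(T,S)=\pm\,b(S,T)$ (odd $d$) and $a(T,S)=\pm\,a(S,T)$, $b(S,T)=\pm\,b(T,S)$ (even $d$), realize $a$ and $b$ as signed sums over intersections of double cosets $G(T,S)=\bigcap_i Y_{s_i}h_iY_{t_i}$, and prove the relation via the inversion bijection $G(T,S)\ni\pi\mapsto\pi^{-1}\in G(S,T)$ together with careful sign bookkeeping. You also correctly single out the crux: showing the error term is the \emph{universal} sign $(-1)^{\pmb{\lambda}}$, independent of the particular pair $(T,S)$.

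However, your proposed mechanism for that crux does not work. You write that the constancy ``follows from Lemma~\ref{lemma:colswap} plus Proposition~\ref{prop:slicesym}, which together show that changing $T$ or $S$ within a fixed weight only rescales $\Delta,\nabla$ by explicit signs.'' This is not the case: Lemma~\ref{lemma:colswap} only relates $T$ to $\pi T$ for $\pi\in S_m$, i.e.\ tables with the \emph{same} underlying hypermatrix, whereas distinct hypermatrices with identical marginals $\pmb{\lambda}'$ are not in the same column-permutation orbit and $\Delta_T$ genuinely changes (not merely by a sign) as you vary the hypermatrix $T$. And Proposition~\ref{prop:slicesym} changes the weight from $\lambda^{(1)}$ to $(i,j)\lambda^{(1)}$, so it cannot be used to connect pairs of a fixed weight except when the partition has equal parts. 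What is actually required is the double-coset machinery of Lemma~\ref{lemma:gst}: (a) that $G(s,t)=Y_s h Y_t$ is a double coset with $\sgn_s(xhy\,t)=\sgn(x)\sgn_s(ht)$, (e) that the product $\sgn_s(\pi t)\cdot(-1)^{\pi}\cdot\sgn_t(\pi^{-1}s)$ is an invariant of the double coset (checked against generators of $Y_s$ and $Y_t$), and (f) a reduction-to-canonical-pair argument showing $(-1)^s\,(-1)^{G(s,t)}\,(-1)^t$ depends only on $\lambda$, which involves two nontrivial moves (a ``nonzero shift'' $t\mapsto ht$ to make $\sgn_s(t)\neq 0$, then column sorting, each tracked case by case). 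None of this is subsumed by the column-swap or slice-symmetry lemmas, so as written the plan has a real gap precisely at the step you flag as delicate. (There is also a small typo in your coefficient identities for even $d$: $b(S,T)=(-1)^{\pmb{\lambda}}\,b(S,T)$ should read $b(S,T)=(-1)^{\pmb{\lambda}}\,b(T,S)$.)
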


To prove this, we first introduce the coefficients $a(T, S)$ and $b(S, T)$.

\begin{definition}
    Let $(S,T) \in A(\pmb{\lambda}) \times A(\pmb{\lambda'})$. After symmetrization (or alternation) of $P_T$ (or $P_S$) (see Section~\ref{sec:spanningset}) we obtain $\Delta_T$ (or $\nabla_S$) with some collinear terms in the sum. Let us collect them to obtain
    \begin{align*}
        \Delta_T = \frac{1}{||T||}\sum_{S \in A(\pmb{\lambda})}(-1)^T\sgn_T(S) \bigvee e_{S}
        =& \sum_{S \in A^{\mathrm{lex}}(\pmb{\lambda})} a(T,S) \bigvee e_S,
        \\
        \qquad\text{where } a(T,S) :=& \frac{(-1)^{T}}{||T||\cdot||S||}\sum_{\pi \in S_m} \sgn_T(\pi S),
        \\
        \nabla_S = \frac{1}{||S||}
        \sum_{T \in B(\pmb{\lambda'})}
            (-1)^S \sgn_S(T) \bigwedge e_{T}
        =& \sum_{T \in B^{\mathrm{lex}}(\pmb{\lambda'})} b(S,T) \bigwedge e_T,
        \\
        \text{where } b(S,T) :=& \frac{(-1)^S}{||S||}  \sum_{\pi \in S_m} \sgn(\pi) \cdot \sgn_S(\pi T).
    \end{align*}
    Then $\langle\Delta_T, \bigvee e_S\rangle = a(T,S)\in \mathbb{Z}$ and $\langle\nabla_S, \bigwedge e_T\rangle = b(S,T) \in \mathbb{Z}$.
\end{definition}

\begin{remark}
    For $\nabla_S$ it is straightforward, since columns of $T$ do not repeat. For $\Delta_T$, each term in the sum of $a(T,S)$ repeats $||S||=|\mathrm{Stab}(S)|$ times, which then cancels out.
\end{remark}

It is clear that the sums in the definition of $a(T,S)$ or $b(S,T)$ may contain vanishing terms. We are going to describe these coefficients with nonzero terms. 

\begin{definition}
    For $d \times m$ tables $X, Y,$ 
    define the following set of permutations
    $$
        G(X,Y) := \{ w \in S_m: \sgn_X(w Y) \neq 0 \}. 
    $$
    Using this set we can rewrite the coefficients $a(T,S)$ and $b(S,T)$ as follows:
    \begin{equation}\label{eq:ats-bst}
        a(T,S) = \frac{1}{||T||\cdot||S||} \sum_{\pi \in G(T,S)} \sgn'_T(\pi S),\quad
        b(S,T) = \frac{1}{||S||}\sum_{\pi \in G(S,T)} \sgn(\pi) \cdot \sgn'_S(\pi T).
    \end{equation}
    where we write:
    $$
        \sgn'_X(\pi Y) := (-1)^X \sgn_X(\pi Y)
    $$
    to shorten the notation. 
    \end{definition}

    Let $\alpha = (\alpha_1,\ldots,\alpha_\ell) \in \mathbb{N}^\ell$ be an integer sequence with $|\alpha| = m$. For $t \in A(\alpha)$, let $Y_{t} \subseteq S_m$ be the stabilizer of the word $t$, known as the \textit{Young subgroup}.

\subsection{Single row}
We first show properties of $G(s,t)$ for single row tables $s, t$. We write lowercase letters for single row tables. 

\begin{lemma}\label{lemma:gst}
    Let $\lambda\vdash m$ be a partition of length $\ell = \ell(\lambda)$. For any pair of words $(s,t) \in A(\lambda) \times A(\lambda')$ the following properties hold:
    \begin{enumerate}[label=(\alph*)]
        \item  $G(s,t) \in Y_{s} \backslash S_m / Y_{t}$ is a double coset. Moreover, for $h \in G(s,t)$ and double coset decomposition $w = xhy \in Y_s h Y_t$ we have $\sgn_s(wt) = \sgn(x)\cdot \sgn_s(ht)$.
        \item $\sgn_s(t) \neq 0 \iff \sgn_t(s) \neq 0 \iff \{(s_i, t_i)\}_{i=1,\ldots,m} = \{(i,j) \in D(\lambda) \}$ is the Young diagram of $\lambda$, where $s = s_1\ldots s_m$ and $t = t_1 \ldots t_m$. 
        \item $\sgn_{s}(t) \neq 0 \iff \sgn_{hs}(ht) \neq 0$ for any $h\in S_m$.
        \item $G(s,t)^{-1} = G(t,s)$.
        \item The product $(-1)^{G(s,t)} := \sgn_s(\pi t) \cdot (-1)^{\pi} \cdot \sgn_{t}(\pi^{-1}s)$ does not depend on $\pi \in G(s,t)$, i.e. it is invariant on the double coset.
        \item The product $f(s,t) := (-1)^s(-1)^{G(s,t)}(-1)^t$
        does not depend on a pair $(s,t)$, but only on $\lambda$ and is equal to $(-1)^{\lambda} := (-1)^{t_0}$ for $t_0 = \underbrace{12\ldots \lambda_1}_{1\text{-st block}}\ldots \underbrace{12\ldots\lambda_{\ell}}_{\ell\text{-th block}}$.
    \end{enumerate}
\end{lemma}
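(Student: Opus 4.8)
My plan is to work through the six parts roughly in the stated order, since the later ones rest on the earlier ones. For part (b), I would first unwind the definition of $\sgn_s(t)$: it is a product over the blocks $\mathrm{block}_1,\ldots,\mathrm{block}_\ell$ of $s$ of the sign of the tuple $t$ restricted to each block, and this product is nonzero exactly when $t$ restricted to $\mathrm{block}_i$ is a permutation of $[|\mathrm{block}_i|] = [\lambda_i]$. Reading this off, the multiset of pairs $\{(s_i,t_i)\}$ consists, for each letter $i$ appearing $\lambda_i$ times in $s$, of the pairs $(i,1),(i,2),\ldots,(i,\lambda_i)$; that is exactly the cell set of $D(\lambda)$. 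The symmetry in $s$ and $t$ is then visible because requiring $t|_{\mathrm{block}_i(s)}$ to be a permutation of $[\lambda_i]$ for all $i$ is the same condition as requiring $s|_{\mathrm{block}_j(t)}$ to be a permutation of $[\lambda'_j]$ for all $j$ — both say the pair multiset is $D(\lambda)$. Part (c) is then immediate: applying $h\in S_m$ permutes the positions simultaneously in $s$ and $t$, so the pair multiset $\{(s_i,t_i)\}$ is unchanged, hence $\sgn_s(t)\neq 0 \iff \sgn_{hs}(ht)\neq 0$.

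For part (a): fix $h\in G(s,t)$, so $\sgn_s(ht)\neq 0$. For $x\in Y_s$ and $y\in Y_t$ I want $\sgn_s(xhyt) = \sgn(x)\sgn_s(ht)$; note $yt = t$ since $y$ stabilizes $t$, so it suffices to show $\sgn_s(x w) = \sgn(x)\sgn_s(w)$ for $x\in Y_s$. Since $x$ permutes positions only within blocks of $s$, on each block $\mathrm{block}_i$ it acts as some $x_i\in S_{\lambda_i}$ and multiplies the $i$-th block factor $\sgn\big((xw)|_{\mathrm{block}_i}\big)$ by $\sgn(x_i)$; the product of the $\sgn(x_i)$ over $i$ is exactly $\sgn(x)$ because $Y_s = \prod_i S_{\mathrm{block}_i}$. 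This simultaneously shows $\sgn_s(wt)$ depends only on the double coset class (up to the factor $\sgn(x)$) and, combined with (c), that $G(s,t)$ is closed under left mult.\ by $Y_s$ and right mult.\ by $Y_t$; that it is a \emph{single} double coset is the nontrivial point — I would argue that if $\sgn_s(w_1 t)\neq 0$ and $\sgn_s(w_2 t)\neq 0$ then $w_1$ and $w_2$ realize the ``same'' matching between the block structure of $s$ and that of $t$ (both send the Young diagram cell set to itself compatibly), so $w_2 w_1^{-1}$, after correcting by block permutations, lies in $Y_s$, giving $w_2 \in Y_s w_1 Y_t$. Part (d) follows by reading (b): $w\in G(s,t)$ means $\sgn_s(wt)\neq 0$, i.e.\ the pair multiset $\{(s_i,(wt)_i)\}$ is $D(\lambda)$; applying $w^{-1}$ to positions and using (b)'s symmetry, $\sgn_t(w^{-1}s)\neq 0$, so $w^{-1}\in G(t,s)$, and conversely.

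For part (e), I would take $\pi,\pi'\in G(s,t)$, write $\pi' = x\pi y$ with $x\in Y_s$, $y\in Y_t$ via (a), and track how each of the three factors $\sgn_s(\pi t)$, $(-1)^\pi$, $\sgn_t(\pi^{-1}s)$ transforms. By (a), $\sgn_s(\pi' t) = \sgn(x)\sgn_s(\pi t)$; by (d) and (a) applied to $G(t,s)$, $\sgn_t(\pi'^{-1}s) = \sgn_t(y^{-1}\pi^{-1}x^{-1}s) = \sgn(y^{-1})\sgn_t(\pi^{-1}s) = \sgn(y)\sgn_t(\pi^{-1}s)$ (using $x^{-1}s = s$); and $(-1)^{\pi'} = (-1)^{x\pi y} = \sgn(x)\sgn(y)(-1)^\pi$ (here I should be a bit careful: $(-1)^w$ as defined is $(-1)^{\mathrm{inv}(w)} = \sgn(w)$ for a permutation $w$, so this is just multiplicativity of the sign). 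Multiplying, the four factors $\sgn(x)^2 = 1$ and $\sgn(y)^2 = 1$ cancel and the product is unchanged — this is the invariance. For part (f), invariance of $f(s,t) = (-1)^s(-1)^{G(s,t)}(-1)^t$ over all pairs $(s,t)$ of complementary weight: by (c)-type reasoning the whole configuration $(s,t,G(s,t))$ is governed only by the underlying block/diagram combinatorics, so I would reduce to the canonical pair $s_0 = 1^{\lambda_1}2^{\lambda_2}\cdots$, $t_0 = 12\cdots\lambda_1\,12\cdots\lambda_2\cdots$ (the reading words of the ``rows'' and ``columns'' of the straight tableau), check $G(s_0,t_0)\ni e$, so $(-1)^{G(s_0,t_0)} = \sgn_{s_0}(t_0)\cdot 1\cdot\sgn_{t_0}(s_0) = \sgn_{s_0}(t_0)^2 = 1$ after verifying $\sgn_{s_0}(t_0) = \pm 1$ (it's $\prod_i \sgn(\mathrm{id}_{[\lambda_i]}) = 1$, actually $+1$), and also $(-1)^{s_0} = +1$ since $s_0$ is weakly increasing hence inversion-free. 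Hence $f(s_0,t_0) = (-1)^{t_0} = (-1)^\lambda$, and invariance gives $f(s,t) = (-1)^\lambda$ for all admissible pairs.

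\textbf{Main obstacle.} The step I expect to be genuinely delicate is proving in part (a) that $G(s,t)$ is a \emph{single} double coset rather than merely a union of them — i.e.\ that any two permutations realizing the nonvanishing condition differ by an element of $Y_s$ on the left and $Y_t$ on the right. Unwinding this amounts to showing the ``matching of blocks of $s$ with blocks of $t$'' forced by the Young-diagram condition in (b) is unique up to internal block shuffles; concretely, if $w_1,w_2$ both carry $t$ to something with pair multiset $D(\lambda)$ alongside $s$, one must check the induced bijection on cells is the same, which uses that in $D(\lambda)$ each cell $(i,j)$ is uniquely determined by its row index $i$ (a value in $s$) and column index $j$ (a value in $t$). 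Once that is pinned down, the other parts are bookkeeping with signs, and the invariance in (e) and (f) follows from the square-cancellation of the $Y_s$- and $Y_t$-correction signs.
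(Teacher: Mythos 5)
Parts (a)--(e) of your plan align closely with the paper's proof. Your (b), (c), (d) arguments are exactly the ``pair multiset equals the Young diagram'' reading that the paper uses, and in (a) your observation that a cell $(i,j)\in D(\lambda)$ is uniquely pinned down by its row index (a value of $s$) and column index (a value of $t$) is precisely what makes the single-coset claim work. The paper implements this concretely: for $w\in G(s,t)$ there is a \emph{unique} $x\in Y_s$ with $xht=wt$ (both $ht$ and $wt$ restrict to permutations of $[\lambda_i]$ on each $s$-block, and $x$ is the unique block-wise permutation matching them), whence $y:=(xh)^{-1}w$ lies in $Y_t$. You flagged this as the main obstacle and your sketch is aimed correctly, but you would need to write this out. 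In (e) the paper varies $\pi$ one transposition at a time while you use the full factorisation $\pi'=x\pi y$ and multiply; these are equivalent, and your version is fine provided you note that the $\sgn_t(y^{-1}w)=\sgn(y)\sgn_t(w)$ step uses the $Y_t$-side analogue of (a).

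The genuine gap is in (f). You write that ``by (c)-type reasoning the whole configuration $(s,t,G(s,t))$ is governed only by the underlying block/diagram combinatorics, so I would reduce to the canonical pair.'' This does not follow, for two separate reasons. First, a simultaneous position permutation $(s,t)\mapsto(\pi s,\pi t)$ can carry $(s,t)$ to $(s_0,t_0)$ only if the joint multiset $\{(s_i,t_i)\}$ already equals $D(\lambda)$, i.e.\ only if $\sgn_s(t)\neq 0$. For an arbitrary admissible pair this fails (e.g.\ $s=t=112$ for $\lambda=(2,1)$), so one must first replace $t$ by $ht$ for a suitable $h\in G(s,t)$. The paper chooses $h$ of minimal length and proves $f(s,t)=f(s,ht)$, using that such $h$ permutes no two equal letters of $t$, which gives $(-1)^{ht}=(-1)^h(-1)^t$ and $\sgn_t(h^{-1}s)=\sgn_{ht}(s)$; none of this is trivial, and your plan skips it. Second, even after $\sgn_s(t)\neq 0$ is arranged, $f$ involves $(-1)^s$ and $(-1)^t$, which are word-inversion signs and are \emph{not} invariant under position permutations, so ``(c)-type reasoning'' does not apply to $f$ directly. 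The paper proves invariance of $f$ under adjacent column swaps by a three-case analysis of the two affected columns (equal top letters, equal bottom letters, or all four letters distinct), showing in each case that exactly two of the four factors of $f$ flip sign. Your plan asserts the invariance but omits the actual argument.

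A minor slip in your canonical-pair evaluation: $\sgn_{s_0}(t_0)\cdot\sgn_{t_0}(s_0)$ is not equal to $\sgn_{s_0}(t_0)^2$ in general, since these two factors need not coincide. Here both happen to be $+1$ (each $s_0$-block reads $t_0$ as the identity word $12\cdots\lambda_i$, and each $t_0$-block reads $s_0$ as $12\cdots\lambda'_j$), so your final value $f(s_0,t_0)=(-1)^{t_0}=(-1)^{\lambda}$ is correct, but the reasoning as written is off.
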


\begin{proof}
    We prove the statements one by one.
    
    (a) Due to the weight of the words $s$ and $t$ we have $G(s,t) \neq \varnothing$. Let us fix $h \in G(s,t)$, i.e. $\sgn_s(ht) \neq 0$.
	For an arbitrary permutation $w \in G(s,t)$ with $\sgn_{s}(w t) \neq 0$, there is a unique permutation $x \in Y_{s}$ which permutes elements only in blocks of $s$ so that the words $x\cdot h t = w \cdot t$ are equal. For $y := (xh)^{-1} w$ we have $y \cdot t = t$, and hence $y \in Y_t$. Therefore, the decomposition $w = xhy \in Y_s h Y_t$ holds. 
	
	Conversely, suppose that $w = xhy \in Y_s h Y_t$. Then $y$ leaves $t$ unchanged, $h$ makes $\sgn_s(ht) \neq 0$. Since each transposition of $x$ to the word $hy \cdot t$ changes the sign by $-1$, we have $\sgn_s(wt) = \sgn_s(xht) = (-1)^{x} \cdot \sgn_s(ht) \neq 0$ and $w \in G(s,t)$.
	
	\vspace{0.5em}

    (b) Let us present $s$ and $t$ as a $2 \times m$ table with the first row $s$ and the second row $t$, which we denote by 
    $$(s,t) := \begin{pmatrix}
        s_1 & \ldots & s_m\\
        t_1 & \ldots & t_m
    \end{pmatrix}.
    $$ 
    Then $\sgn_s(t) \neq 0$ if and only if the set of columns of the table $(s,t)$ is equal to the set of cell coordinates of the diagram $\lambda$. 
    
    \vspace{0.5em}

    (c) Swapping rows or columns does not violate this property. 
    
    \vspace{0.5em}

    (d) Let $w^{-1} \in G(s,t)^{-1}$, i.e. $\sgn_s(w^{-1} t) \neq 0$. Then by (b) we have:
    $$
        \sgn_s(w^{-1}t) \neq 0 \iff \sgn_{w^{-1}t}(s) \neq 0 \iff \sgn_{t}(ws) \neq 0,
    $$
    i.e. $w \in G(t,s)$ and all steps are reversible.
    
    \vspace{0.5em}

    (e) By (a) we can write $G(s,t) = Y_s h Y_t$ for an arbitrary element $h \in G(s,t)$. Since Young subgroups are generated by transpositions, this double coset can also be generated with multiplying $h$ by transpositions of $Y_s$ from the left and by transpositions of $Y_t$ from the right. 
    For any transposition $\alpha \in Y_s$, replacing $h \to \alpha h$ we have: $\sgn_s(\alpha h t) = -\sgn(h t)$ (transposition within $s$-block), $\sgn_t((\alpha h)^{-1}s) = \sgn_t(h^{-1} s)$ (transposition of equal letters) and $(-1)^{\alpha h} = -(-1)^{h}$, thus the product is unchanged. 
    Similarly, for a transposition $\beta \in Y_t$ replacing $h \to h \beta$ we have: $\sgn_s(w\beta t) = \sgn_s(wt)$, $\sgn_t((h\beta)^{-1} s) = -\sgn_t(hs)$ and $(-1)^{h\beta} = -(-1)^{h}$. Thus, the product is indeed invariant on the double coset.
    
    \vspace{0.5em}

    (e) Let $s_0 = 1^{\lambda_1}\ldots \ell^{\lambda_\ell}$ and $t_0 = 12\ldots\lambda_1\ldots12\ldots\lambda_\ell$. 
    We will show consequently that $f(s,t) = f(s_0, t_0)$. We will do that in two steps: first, we replace $t \to ht$ so that $\sgn_s(ht) \neq 0$, i.e find a nonzero shift; then we sort the columns of $(s,ht)$ to achieve $(s_0, t_0)$. Diagrammatically we have:
    $$
        \begin{pmatrix}
            s\\
            t
        \end{pmatrix} 
        \xlongrightarrow{t \to ht}
        \begin{pmatrix}
            s\\
            ht
        \end{pmatrix} 
        \xlongrightarrow{\text{ sort columns }}
        \begin{pmatrix}
            s_0\\
            t_0
        \end{pmatrix} = 
        \begin{pmatrix}
            1^{\lambda_1}& \ldots & \ell^{\lambda_\ell}\\
            12\ldots\lambda_1&\ldots& 12\ldots\lambda_\ell
        \end{pmatrix}
    $$
    
    Let $h \in G(s,t)$ be the permutation of minimal length satisfying $\sgn_s(ht) \neq 0$. This permutation does not permute equal letters of $t$ (otherwise the length can be reduced). 
    Let us show that $f(s,t) = f(s, ht)$. By (c) we have:
    \begin{align*}
        f(s,t) &= (-1)^s\sgn_s(ht) \cdot (-1)^h \cdot \sgn_t(h^{-1} s) (-1)^t,\\ 
        f(s,ht) &= (-1)^s \sgn_s(ht) \cdot (-1)^{\mathrm{id}} \cdot \sgn_{ht}(s) (-1)^{ht}.
    \end{align*}
    Note that $(-1)^{ht} = (-1)^h (-1)^t$, since each transposition of $h$ swaps only distinct letters of $t$. It remains to show that $\sgn_t(h^{-1} s) = \sgn_{ht}(s)$. Indeed, the table $(t, h^{-1}s)$ is obtained from the table $(ht, s)$ by permuting its columns according to the permutation $h$, which does not permute equal letters of $t$. Hence, the relative order 
    of $t$-block permutations in the bottom row does not change in both tables; this implies that $\sgn_t(h^{-1} s) = \sgn_{ht}(s)$ and $f(s,t) = f(s,ht)$.

    Now we may assume that $\sgn_s(t) \neq 0$ (i.e. replace $ht \to t$), then the table $(s,t)$ contains distinct columns forming  the set $\{(i,j)\in D(\lambda)\}$. 
    By (b) we can reach the table $(s_0,t_0)$ from the table $(s,t)$ by column swaps. Let us consequently apply adjacent transpositions of the form $(i,i+1)$ to the columns of $(s,t)$ to obtain $(s_0, t_0)$ and observe that $f(s,t)$ does not change. By (d), put $\pi = e$ to rewrite
    $$
        f(s,t) = (-1)^s \sgn_s(t) \cdot \sgn_t(s) (-1)^t.
    $$
    Let $\alpha=(i,i+1)$ be the transposition we apply. Let us keep track of the sign changes after applying $\alpha$. If $i$-th and $(i+1)$-th columns are: 
    \begin{itemize}
        \item $\begin{bmatrix}
            xx\\
            yz
        \end{bmatrix}$ then the changes are $\sgn_{\alpha s}(\alpha t) \to -\sgn_s(t)$ and $(-1)^{\alpha t} = -(-1)^t$;
        \item $\begin{bmatrix}
            yz\\
            xx
        \end{bmatrix}$ then the changes are $\sgn_{\alpha t}(\alpha s) \to -\sgn_s(t)$ and $(-1)^{\alpha s} = -(-1)^s$;
        \item $\begin{bmatrix}
            xy\\
            zw
        \end{bmatrix}$ then the changes are $(-1)^{\alpha s} \to -(-1)^s$ and $(-1)^{\alpha t} \to -(-1)^{t}$,
    \end{itemize}
    where $x,y,z,w$ are distinct letters.
    The case where columns are equal is impossible. As we can see in each case $f(s,t)$ does not change, hence the statement follows.
\end{proof}
\begin{remark}
    Above discussions automatically imply $(-1)^{\pmb{\lambda}} = (-1)^{\pmb{\lambda'}}$.
\end{remark}

\subsection{Multiple rows}
Let us now show properties of sets $G(S,T)$ for $d > 1$.

\begin{lemma}\label{lemma:ats}
	Let $(S, T) \in A(\pmb{\lambda}) \times A(\pmb{\lambda'})$ with $S = (s_1,\ldots,s_d)$ and $t = (t_1,\ldots,t_d)$. 
	The following properties hold:
	\begin{itemize}
		\item[(a)] $G(S,T) = \bigcap_{i=1}^d Y_{s_i} \cdot h_i \cdot Y_{t_i}$  for elements $h_i \in G(s_i, t_i)$.
		\item[(b)] $G(S,T) = G(T,S)^{-1}$.
		\item[(c)] For odd $d$ we have $a(T,S) = (-1)^{\pmb{\lambda}}\, b(S,T)$.
		\item[(d)] For even $d$ we have $a(T,S) = (-1)^{\pmb{\lambda}}\, a(S,T)$ and $b(S,T) = (-1)^{\pmb{\lambda}}\, b(T,S)$.
	\end{itemize}	 
\end{lemma}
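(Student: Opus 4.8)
The plan is to get (a) and (b) directly from the single-row Lemma~\ref{lemma:gst}, and then to obtain (c) and (d) from one uniform sign computation on the double coset $G(T,S)$, in which the parity of $d$ enters only through the exponent of $(-1)^{\pi}$.

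For (a) and (b): since $\sgn_X(wY)=\prod_{i=1}^{d}\sgn_{x_i}(wy_i)$ for tables $X=(x_1,\dots,x_d)$, $Y=(y_1,\dots,y_d)$, a permutation $w$ lies in $G(X,Y)$ iff it lies in every $G(x_i,y_i)$, so $G(S,T)=\bigcap_{i=1}^{d}G(s_i,t_i)$. Part (a) is then immediate from Lemma~\ref{lemma:gst}(a), which identifies $G(s_i,t_i)$ with the double coset $Y_{s_i}h_iY_{t_i}$ for any $h_i\in G(s_i,t_i)$. For (b) I would use that inversion is a bijection of $S_m$, hence commutes with intersection, together with Lemma~\ref{lemma:gst}(d): $G(S,T)^{-1}=\bigcap_i G(s_i,t_i)^{-1}=\bigcap_i G(t_i,s_i)=G(T,S)$.

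For (c) and (d) I would start from the double-coset expressions \eqref{eq:ats-bst}. Fix $\pi\in G(T,S)=\bigcap_i G(t_i,s_i)$; by (b) this also forces $\pi^{-1}\in G(s_i,t_i)$, so every sign $\sgn_{t_i}(\pi s_i)$ and $\sgn_{s_i}(\pi^{-1}t_i)$ is $\pm1$. Applying the double-coset invariant of Lemma~\ref{lemma:gst}(e) row by row gives $\sgn_{t_i}(\pi s_i)=(-1)^{G(t_i,s_i)}(-1)^{\pi}\sgn_{s_i}(\pi^{-1}t_i)$, and multiplying over $i$ yields
$$\sgn_T(\pi S)=\Big(\prod_{i=1}^{d}(-1)^{G(t_i,s_i)}\Big)\big((-1)^{\pi}\big)^{d}\sgn_S(\pi^{-1}T).$$
By Lemma~\ref{lemma:gst}(f) applied to each pair $(t_i,s_i)$, together with $(-1)^{\pmb{\lambda'}}=(-1)^{\pmb{\lambda}}$, the first product equals $(-1)^{\pmb{\lambda}}(-1)^{T}(-1)^{S}$. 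Substituting this into \eqref{eq:ats-bst}, cancelling $(-1)^{T}$ against the $(-1)^{T}$ prefactor of $a(T,S)$, and reindexing by $\sigma=\pi^{-1}$ --- which ranges over $G(S,T)$ by (b), with $\sgn(\sigma)=\sgn(\pi)$ --- turns the sum over $G(T,S)$ into a sum over $G(S,T)$ in $\sgn_S(\sigma T)$. For odd $d$ the factor $\big((-1)^{\pi}\big)^{d}=\sgn(\pi)$ supplies exactly the sign occurring in $b(S,T)$, giving $a(T,S)=(-1)^{\pmb{\lambda}}b(S,T)$; for even $d$ that factor is $1$ and the sum is the one defining $a(S,T)$, giving $a(T,S)=(-1)^{\pmb{\lambda}}a(S,T)$. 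The $b$-identity in (d) comes from the same computation started from $b(S,T)$, applying Lemma~\ref{lemma:gst}(e) to $\sgn_{s_i}(\pi t_i)$ instead.

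The conceptual content is already packaged in Lemma~\ref{lemma:gst}(e)--(f), so I expect the only real nuisance to be the normalizations: $a$ carries $1/(||T||\,||S||)$ whereas $b$ carries only $1/||S||$, so the computation closes on the nose for $a(T,S)=(-1)^{\pmb{\lambda}}a(S,T)$ but only up to a stray factor $1/||T||$ for $a(T,S)=(-1)^{\pmb{\lambda}}b(S,T)$ (and up to $||T||/||S||$ for $b(S,T)=(-1)^{\pmb{\lambda}}b(T,S)$). This is harmless: if $T$ or $S$ has a repeated column, both sides of the identity vanish --- one side by Lemma~\ref{lemma:vanish}, the other because $\bigwedge e_T=0$ or $\bigwedge e_S=0$ or because the alternating sum defining $b$ cancels in pairs --- and otherwise the stray factor is $1$. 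It is worth noting that it is precisely the exponent $d$ on $(-1)^{\pi}$ that produces the stated dichotomy: the $\Delta$--$\nabla$ duality for odd $d$ versus the conjugation self-symmetry of $\Delta$ and of $\nabla$ for even $d$.
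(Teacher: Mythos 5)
Your proposal is correct and follows essentially the same route as the paper: parts (a) and (b) reduce to Lemma~\ref{lemma:gst}(a),(d) row by row, and parts (c),(d) come from the very same sign identity on the double coset (the paper's \eqref{eq:sign-identity}, which you derive by combining Lemma~\ref{lemma:gst}(e) and (f) and multiplying over rows), followed by the reindexing $\sigma=\pi^{-1}$ and the observation that the stray stabilizer factors only matter when a repeated column already forces both sides to vanish — exactly the paper's remark that $||T||=1$ ``otherwise the identity is trivial'' (resp. $||S||=||T||=1$ whenever $b\neq 0$).
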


\begin{proof} 
	(a) A permutation $w \in S_m$ satisfies $w \in G(S,T)$ if and only if $\sgn_S(wT) \neq 0$, which is by definition $\sgn_S(wT) = \prod_{i=1}^d\sgn_{s_i}(wt_i) \neq 0$. Hence, $w \in G(s_i, t_i)$ or equivalently, 
	$$
		G(S,T) = \bigcap_{i=1}^d G(s_i, t_i) = \bigcap_{i=1}^d Y_{s_i}h_iY_{t_i}.
	$$
	This implies that  by Lemma~\ref{lemma:gst} for each $w \in G(S,T)$ and row $i$ we have the decomposition $w = x_i h_i y_i$ with $x_i \in Y_{s_i}$, $y_i \in Y_{t_i}$ and $h_i \in G(s_i,t_i)$. Note that $x_i$ and $y_i$ depend on $w$. 

    \vspace{0.5em}

	(b) For $w \in G(S,T)$ and each $i \in [d]$, if $w = x_i h_i y_i$ then $w^{-1} = (y_i)^{-1} h^{-1}_i (x_i)^{-1} \in Y_{t_i} h^{-1}_i Y_{s_i}$, and hence $w^{-1} \in G(T,S)$. Since the group inverse operation is bijective we have $G(S,T)^{-1} = G(T,S)$. 

    By Lemma~\ref{lemma:gst}(e) for $w \in G(S,T)$ we have $\sgn'_{s_i}(w t_i) \cdot \sgn'_{t_i}(w^{-1} s_i) = (-1)^{w}(-1)^{\lambda^{(i)}}$. This implies the following correspondence for the terms of $b(S,T)$ and $a(T,S)$:
    \begin{align}\label{eq:sign-identity}
        \sgn'_S(wT) 
        &= \prod_{i=1}^d \sgn'_{s_i}(w t_i) \nonumber 
        = \prod_{i=1}^d (-1)^{\lambda^{(i)}} (-1)^{w}\cdot \sgn'_{t_i}(w^{-1} s_i) \nonumber 
        \\
        &= (-1)^{\pmb{\lambda}}\, ((-1)^{w})^d \cdot \sgn'_T(w^{-1}S). 
    \end{align}

	(c) For odd $d$ by (b) and the identity \eqref{eq:sign-identity} we obtain
	\begin{align*}
		||S|| \cdot b(S,T) &= \sum_{w \in G(S,T)}(-1)^w\sgn'_S(wT)
		= (-1)^{\pmb{\lambda}}\sum_{w^{-1} \in G(T,S)} \sgn'_T(w^{-1}S) \\
		&= (-1)^{\pmb{\lambda}}\, a(T,S) \cdot ||S||,
	\end{align*}
    where $||T|| = 1$, otherwise the identity is trivial. Thus, $b(S,T) = (-1)^{\pmb{\lambda}} a(T,S)$.

    (d) For even $d$ we have $\sgn'_S(wT) = (-1)^{\pmb{\lambda}} \sgn'_T(w^{-1} S)$ in \eqref{eq:sign-identity} and
    \begin{align*}
        ||T|| \cdot a(T,S) \cdot||S||
        &= \sum_{w \in G(T,S)} \sgn'_{T}(w S)
        = (-1)^{\pmb{\lambda}} \sum_{w^{-1}  \in G(S,T)} \sgn'_{S}(w^{-1}  T)\\
        &= (-1)^{\pmb{\lambda}}\, ||S|| \cdot a(S, T) \cdot ||T||, \\
        ||S|| \cdot b(S, T)
        &= \sum_{w \in G(S,T)} (-1)^{w} \cdot \sgn'_{S}(w T)
        = (-1)^{\pmb{\lambda}}\sum_{w^{-1}  \in G(T,S)} (-1)^{w^{-1}} \cdot \sgn'_{T}(w^{-1}  S) \\ 
        &= (-1)^{\pmb{\lambda}}\, ||T|| \cdot b(T,S).
    \end{align*}
    Note, that if $b(S,T) \neq 0$ then $S,T \in B_d$, therefore $||S||=||T|| = 1$.
\end{proof}

\begin{proof}[Proof of Theorem~\ref{th:duality2}]
(i) If $d$ is odd, then by Lemma~\ref{lemma:ats}(c) we have:
\begin{align*}
\langle \Delta_T, \bigvee e_S \rangle = a(T,S) = (-1)^{\pmb{\lambda}}\, b(S,T) = (-1)^{\pmb{\lambda}} \langle\nabla_{S}, \bigwedge e_T\rangle.
\end{align*}
Hence,
\begin{align*}
\Delta_T &= \sum_{S \in A(\pmb{\lambda})} a(T,S) \bigvee e_S =  (-1)^{\pmb{\lambda}} \sum_{S \in A(\pmb{\lambda})} \langle\nabla_{S}, \bigwedge e_T\rangle \bigvee e_S,
\\
\nabla_S &= \sum_{T \in B(\pmb{\lambda'})} b(S,T) \bigwedge e_T = (-1)^{\pmb{\lambda}} \sum_{T \in B(\pmb{\lambda'})}   \langle\Delta_{T}, \bigvee e_S\rangle \bigwedge e_T.
\end{align*}

(ii) If $d$ is even, then by Lemma~\ref{lemma:ats}(d) we have: 
$$
a(S,S') = (-1)^{\pmb{\lambda}}\, a(S',S) =(-1)^{\pmb{\lambda}} \langle 
        \Delta_{S'}, \bigvee e_{S}
    \rangle, 
$$
$$ 
    b(T,T') = (-1)^{\pmb{\lambda}}\, b(T',T) = (-1)^{\pmb{\lambda}}\langle 
        \Delta_{T'}, \bigwedge e_{T}
    \rangle.
$$ Hence, 
\begin{align*}
    \Delta_{S} &=  (-1)^{\pmb{\lambda}}\sum_{S' \in A(\pmb{\lambda})} a(S,S')\bigvee e_{S'}
    =  (-1)^{\pmb{\lambda}}\sum_{S' \in A(\pmb{\lambda})}
        \langle
            \Delta_{S'}, 
            \bigvee e_{S}\rangle
        \bigvee e_{S'},
        \\
    \nabla_{T} &=  (-1)^{\pmb{\lambda}}\sum_{T' \in  B(\pmb{\lambda})} b(T,T') \bigwedge e_{T'}
    =  (-1)^{\pmb{\lambda}} \sum_{T' \in  B(\pmb{\lambda})}
        \langle
            \nabla_{T'}, 
            \bigwedge e_T\rangle 
        \bigwedge e_{T'}. 
\end{align*}
This completes the proof.
\end{proof}

\subsection{Interpretation of  coefficients}
Let us now give a combinatorial interpretation of the coefficients $a(T,S)$ (and $b(S, T)$), for $T\in A^{\mathrm{lex}}(m)$ and $S \in A^{\mathrm{lex}}(m)$.

Let $\sigma$ be a bijective map $\sigma:T \to S$ (where $T, S$ are viewed as \textit{multisets} of columns), which is read as follows. 
Consider $T \subseteq [k]^d$ as a choice of some cells of a cube, which are going to be filled with columns from $S$. Each cell (column) $p$ of $T$ is filled by certain entry (column) $q$ of $S$, namely if $\sigma(p) = q$. This way, each cell $p \in [k]^d$ has $T_p$ entries and entry $q \in S$ is present $S_q$ times. The entries within same cell are ordered.

We call bijection $\sigma:T \to S$ a {\it $(T, S)$-filling} if additionally $\sgn_T(\sigma(T)) \neq 0$, i.e. for each direction $i \in [d]$ and for each $j$-slice $E^{(i)}_j$ in $i$-th direction: the entries of $\sigma$ in slice $E^{(i)}_j$ must have a permutation if all coordinates except the $i$-th one are ignored. More formally, if $T \cap E = \{q^1 < \ldots < q^\ell \}$ and $\sigma(q^{k}) = p^{k} \in [n]^d$, then we must have $(p^{1}_i,\ldots,p^{\ell}_i) \in S_\ell$.

The {\it sign} $\sgn(\sigma)$ of $(T,S)$-filling $\sigma$ is then the product of signs $\sgn(p^1_i \ldots p^\ell_i)$ over all slices $E^{(i)}_j$ for $i = 1,\ldots, d$ and $j = 1,\ldots, k$.
This describes the following interpretation.
\begin{proposition}\label{prop:interpretation}
    For $(T, S) \in A(\pmb{\lambda'}) \times A(\pmb{\lambda})$ we have 
    $$
        a(T,S) = (-1)^{T} \sum_{\sigma: T \to S} \sgn(\sigma)
    $$
    where $\sigma$ runs over all $(T,S)$-fillings. 
\end{proposition}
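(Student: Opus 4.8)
The plan is to unwind the definition of $a(T,S)$ given in \eqref{eq:ats-bst} and match the signed sum over $G(T,S)$ with the signed sum over $(T,S)$-fillings. Recall
$$
a(T,S) = \frac{1}{\|T\|\cdot\|S\|}\sum_{\pi\in G(T,S)} \sgn'_T(\pi S) = \frac{(-1)^T}{\|T\|\cdot\|S\|}\sum_{\pi\in G(T,S)}\sgn_T(\pi S).
$$
First I would set up the correspondence between permutations $\pi\in S_m$ and bijections $\sigma\colon T\to S$ of multisets of columns: given $\pi$, the column $\pi(S)(j)$ placed in position $j$ can be reindexed so that $\sigma$ sends the $j$-th cell of $T$ (in lex order, as a multiset) to the column $S(\pi^{-1}(j))$. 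The condition $\sgn_T(\pi S)\neq 0$ is, by unravelling Definition~\ref{def:sign} row-by-row, exactly the requirement that within each slice $E^{(i)}_j$ the $i$-th coordinates of the assigned columns form a permutation — i.e. that $\sigma$ is a $(T,S)$-filling — and in that case $\sgn_T(\pi S)=\sgn(\sigma)$ by comparing the block-signature product with the slice-signature product defining $\sgn(\sigma)$.

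The remaining point is the multiplicity count. A single $(T,S)$-filling $\sigma$ is produced by exactly $\|T\|\cdot\|S\|$ permutations $\pi$: the $\|S\|$ factor comes from permuting equal columns of $S$ among themselves before placing them (these do not change $\sigma$ as a map of multisets, nor its sign, since equal columns contribute equal coordinates), and the $\|T\|$ factor comes from permuting, within each cell of $T$, the multiple entries assigned to that cell — but here one must be slightly careful: reordering entries inside one cell does change the internal order referenced in the definition of $\sgn(\sigma)$, so I would instead argue that the ordered-fillings are counted without the $\|T\|$ normalization and that collapsing to the cell-multiset picture introduces the factor $\|T\|$, or alternatively just track both redundancies simultaneously as in the Remark following the definition of $a(T,S)$. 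Either way, the orbit of each $\sigma$ under these redundancies has size $\|T\|\cdot\|S\|$ and constant sign, so
$$
\sum_{\pi\in G(T,S)}\sgn_T(\pi S) = \|T\|\cdot\|S\|\sum_{\sigma\colon T\to S}\sgn(\sigma),
$$
and dividing by $\|T\|\cdot\|S\|$ and restoring the leading $(-1)^T$ gives the claim.

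The main obstacle I anticipate is bookkeeping the two independent redundancies (equal columns of $S$ versus equal cells of $T$, the latter with ordered entries) cleanly, so that one genuinely recovers the factor $\|T\|\cdot\|S\|$ and not an over- or under-count; this is essentially the content of the Remark "For $\nabla_S$ it is straightforward$\ldots$ each term in the sum of $a(T,S)$ repeats $\|S\|$ times" but extended to also absorb the $\|T\|$ coming from the ordered entries inside each cell. Once the bijection $\pi\leftrightarrow(\sigma,\text{internal orderings})$ is pinned down precisely and shown to be sign-preserving, the identity is immediate.
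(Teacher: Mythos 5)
Your proposal takes the same route as the paper: expand the definition of $a(T,S)$, observe that the nonvanishing of $\sgn_T(\pi S)$ is exactly the slice-permutation condition, and divide out the overcount by $\|T\|\cdot\|S\|$. The paper's own proof is even terser than yours and glosses over precisely the point you flag.

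To close the gap you worry about, here is the observation you are missing. If $\pi$ gives a nonzero term, then inside any one cell $p$ of $T$ the entries assigned by $\pi$ must be \emph{distinct} columns of $S$ (if two entries of cell $p$ were equal, some slice through $p$ would see a repeated $i$-th coordinate and the block sign would vanish). Hence the $\mathrm{stab}(T)$-action on the nonzero terms is free, so its orbits have size exactly $\|T\|$, and combined with the free $\mathrm{stab}(S)$-action you really do get orbits of size $\|T\|\cdot\|S\|$. As for the sign on such an orbit: a transposition $\tau$ inside a cell of $T$ multiplies $\sgn_T(\cdot)$ by $(-1)^d$, so the sign is constant on the orbit when $d$ is even; when $d$ is odd, the Vanishing Lemma forces $\|T\|=1$ whenever $a(T,S)\ne 0$, so the issue does not arise. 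With this parity case analysis in place your argument — and the paper's — is complete, and your instinct that the $\|T\|$ side is the delicate one is exactly right.
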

\begin{proof}
We have
$$
    a(T,S) = \frac{(-1)^T}{||S||\cdot ||T||} \sum_{\sigma \in S_m} \sgn_T(\sigma S) = (-1)^T\sum_{\sigma: T \to S\text{ - injective}} \sgn_T(\sigma(T)),
$$
where we match $\sgn(p^1_i \ldots p^\ell_i)$ with the sign of the corresponding block in $i$-th row of $T$ (each slice $E$ corresponds to certain row-block). The factor $||T||$ ensures that the order within the cells does not matter.
\end{proof}

\section{Duality isomorphism of highest weight spaces}\label{sec:isomorphism}
Let $V = (\mathbb{C}^n)^{\otimes d}$, $\pmb{\lambda}$ be a $d$-tuple of partitions and recall that we have: 
$$
\begin{matrix}
    \text{odd }d:&
    &\mathrm{HWV}_{\pmb{\lambda}} \bigvee V 
    = \mathrm{span}\{ \Delta_T: T\in B^{\mathrm{lex}}(\pmb{\lambda'}) \},
    &\mathrm{WV}_{\pmb{\lambda}} \bigvee V 
    = \langle \bigvee e_S: S \in A^{\mathrm{lex}}(\pmb{\lambda})\rangle,
    \vspace{0.5em}
    \\
    &~& \mathrm{HWV}_{\pmb{\lambda'}} \bigwedge V 
    = \mathrm{span}\{ \nabla_S: S \in A^{\mathrm{lex}}(\pmb{\lambda}) \},
    &\mathrm{WV}_{\pmb{\lambda'}} \bigwedge V 
    = \langle \bigwedge e_T: T \in B^{\mathrm{lex}}(\pmb{\lambda'}) \rangle;
    \vspace{1em}
    \\
    \text{even }d:&
    &\mathrm{HWV}_{\pmb{\lambda}} \bigvee V 
    = \mathrm{span}\{ \Delta_S: S\in A^{\mathrm{lex}}(\pmb{\lambda'}) \},
    &\mathrm{WV}_{\pmb{\lambda}} \bigvee V 
    = \langle \bigvee e_S: S \in A^{\mathrm{lex}}(\pmb{\lambda})\rangle,
    \vspace{0.5em}
    \\
    &~&\mathrm{HWV}_{\pmb{\lambda'}} \bigwedge V 
    = \mathrm{span}\{ \nabla_T: T \in B^{\mathrm{lex}}(\pmb{\lambda}) \},
    &\mathrm{WV}_{\pmb{\lambda'}} \bigwedge V 
    = \langle \bigwedge e_T: T \in B^{\mathrm{lex}}(\pmb{\lambda'}) \rangle,
\end{matrix}
$$
where $\mathrm{supp}(T) \subseteq [n]^d$ and $\mathrm{supp}(S) \subseteq [n]^d$. 

\begin{definition}(Index maps)
    Define the linear maps: 
    \begin{align*}
        \text{for odd }d:\ \ 
        \Delta: \bigwedge V \longrightarrow \mathrm{HWV} \bigvee V, \quad
        \nabla: \bigvee V \longrightarrow \mathrm{HWV} \bigwedge V,\\
        \text{for even }d:\ \ 
        \Delta: \bigvee V \longrightarrow \mathrm{HWV} \bigvee V, \quad
        \nabla: \bigwedge V \longrightarrow \mathrm{HWV} \bigwedge V,
    \end{align*}
   on the standard bases as follows  
    $$
    \begin{matrix}
        \text{for odd }d:& \qquad
        &\Delta: \mathrm{WV}_{\pmb{\lambda}} \bigwedge V 
        \longrightarrow \mathrm{HWV}_{\pmb{\lambda'}} \bigvee  V,
        \qquad\bigwedge e_T \longmapsto \Delta_T,\qquad\qquad
        \vspace{0.5em}
        \\
        &~
        &\nabla: \mathrm{WV}_{\pmb{\lambda}} \bigvee V 
        \longrightarrow \mathrm{HWV}_{\pmb{\lambda'}} \bigwedge V, 
        \qquad\bigvee e_S \longmapsto \nabla_S; \qquad\qquad
        \vspace{1em}
        \\
        \text{for even }d:&
        &\Delta: \mathrm{WV}_{\pmb{\lambda}} \bigvee V 
        \longrightarrow \mathrm{WV}_{\pmb{\lambda'}} \bigvee V, 
        \qquad\bigvee e_S \longmapsto \Delta_S, \qquad\qquad
        \vspace{0.5em}
        \\
        &~
        &\nabla: \mathrm{WV}_{\pmb{\lambda}} \bigwedge V 
        \longrightarrow \mathrm{WV}_{\pmb{\lambda'}} \bigwedge V, 
        \qquad\bigwedge e_T \longmapsto \nabla_T.\qquad\qquad
    \end{matrix}
    $$
    and extended linearly, where $S \in A^\lex(\pmb{\lambda})$, $T \in B^\lex(\pmb{\lambda})$. The maps are well-defined due to  Lemma~\ref{lemma:colswap} on column swaps. To define the map on total wedge (or symmetric) space we allow $\pmb{\lambda}$ to be $d$-tuple of compositions as well. The weight spaces equivalent up to the slice swaps (in the sense of Proposition~\ref{prop:slicesym}) have identical images. Therefore, it is enough to restrict $\pmb{\lambda}$ to partitions. 
\end{definition}

\begin{remark}\label{remark:domain}
The domain $\bigwedge V$ of the map $\Delta$ (resp. $\nabla$) can be also restricted to the Weyl group orbits $\bigwedge V/W$, where $W = (S_n)^d$ is the group generated by slice transpositions, the Weyl group for $\mathrm{GL}(n)^d$. 
That is, for transposition $(i,j) \in S_n$ and $\wedge e_T \in \bigwedge V$ with $T \in B^{\mathrm{lex}}(\lambda,\ldots)$ the action is defined by $(i,j) \cdot \wedge e_T := (-1)^{\lambda_i \lambda_j}\wedge e_{T'}$, where table $T'$ is of weight $(\ldots,\lambda_{i+1},\lambda_i,\ldots)$ and results in switching symbols $i$ and $j$ of the first row of $T$ (similarly for other rows). Then by Proposition~\ref{prop:slicesym}, for any $w \in W$ the image $\Delta(w \cdot \wedge e_T) = \Delta_T$.
See also Remark~\ref{remark:kernel}.
\end{remark}

\begin{theorem}[Duality isomorphism of highest weight spaces]\label{th:isomorphism}
    Let $\pmb{\lambda}$ be a $d$-tuple of partitions of $m$. Then the maps $\Delta$ and $\nabla$ are surjective. 

(i) For odd $d$, 
$$
\Delta: \mathrm{HWV}_{\pmb{\lambda'}} \bigwedge V 
        \longrightarrow \mathrm{HWV}_{\pmb{\lambda}} \bigvee  V,  
        \qquad 
\nabla: \mathrm{HWV}_{\pmb{\lambda'}} \bigvee V 
        \longrightarrow \mathrm{HWV}_{\pmb{\lambda}} \bigwedge V,
$$
are isomorphisms of highest weight spaces. 

(ii) For even $d$, 
$$
\Delta: \mathrm{HWV}_{\pmb{\lambda'}} \bigvee V 
        \longrightarrow \mathrm{HWV}_{\pmb{\lambda}} \bigvee  V,  
        \qquad 
\nabla: \mathrm{HWV}_{\pmb{\lambda'}} \bigwedge V 
        \longrightarrow \mathrm{HWV}_{\pmb{\lambda}} \bigwedge V,
$$
are isomorphisms of highest weight spaces. 
\end{theorem}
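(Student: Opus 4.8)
The plan is to pass to coordinates and reduce the statement to a rank identity for the integer matrices $a(\cdot,\cdot)$ and $b(\cdot,\cdot)$. Throughout I treat the odd case; the even case is identical with Lemma~\ref{lemma:ats}(c) replaced by Lemma~\ref{lemma:ats}(d). Surjectivity of the maps $\Delta$ and $\nabla$ onto the relevant highest weight spaces is already contained in Theorem~\ref{th:spanning-sets}, so the whole content is injectivity on the highest weight subspaces.

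Fix $\pmb{\lambda}$ and expand everything in the standard weight bases $\{\bigvee e_S\}$ and $\{\bigwedge e_T\}$. Let $A=(a(T,S))$, with rows indexed by the relevant $T$'s and columns by the relevant $S$'s, so that the rows of $A$ are the coordinate vectors of the $\Delta_T$. By Lemma~\ref{lemma:ats}(c), $b(S,T)=(-1)^{\pmb{\lambda}}a(T,S)$, i.e. the matrix $B=(b(S,T))$ equals $(-1)^{\pmb{\lambda}}A^{\top}$, and its rows are the coordinate vectors of the $\nabla_S$. Now read off from Theorem~\ref{th:spanning-sets}(i): in these bases $\Delta$ is the linear map $x\mapsto A^{\top}x$ whose image is $\mathrm{HWV}_{\pmb{\lambda}}\bigvee V$, the row space of $A$; the source space $\mathrm{HWV}_{\pmb{\lambda'}}\bigwedge V=\mathrm{span}\{\nabla_S\}$ is the row space of $B$, which by $B=(-1)^{\pmb{\lambda}}A^{\top}$ is the column space of $A$; and symmetrically $\nabla$ is $y\mapsto B^{\top}y$, with image $\mathrm{HWV}_{\pmb{\lambda}}\bigwedge V$ the row space of $B$ and source $\mathrm{HWV}_{\pmb{\lambda'}}\bigvee V$ the column space of $B$. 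Hence $\Delta$ restricted to $\mathrm{HWV}_{\pmb{\lambda'}}\bigwedge V$ is the operator $v\mapsto A^{\top}v$ on the column space of $A$, which is injective if and only if $\ker A^{\top}$ meets the column space of $A$ only in $0$, i.e. if and only if $\mathrm{rank}(A^{\top}A)=\mathrm{rank}(A)$; similarly for $\nabla$ with $B$ in place of $A$.

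This rank identity is the one nonformal step, and it is exactly where the duality does its work: it is the identification of $\mathrm{HWV}_{\pmb{\lambda'}}\bigwedge V$ with the column space of the very matrix $A$ defining $\Delta$ (via $B=(-1)^{\pmb{\lambda}}A^{\top}$) that makes the Gram matrix $A^{\top}A$ appear at all. Over $\mathbb{C}$ a surjection need not restrict to a surjection of a prescribed subspace, and such a subspace may be isotropic for $A^{\top}A$; but the entries $a(T,S)$ lie in $\mathbb{Z}$, so $A^{\top}A$ is the Gram matrix of a real matrix and hence $\ker_{\mathbb{R}}(A^{\top}A)=\ker_{\mathbb{R}}(A)$, and since the rank of an integer matrix is unchanged under the extension $\mathbb{Q}\subseteq\mathbb{C}$ we conclude $\ker_{\mathbb{C}}(A^{\top}A)=\ker_{\mathbb{C}}(A)$. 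Thus $\Delta|_{\mathrm{HWV}_{\pmb{\lambda'}}\bigwedge V}$ and $\nabla|_{\mathrm{HWV}_{\pmb{\lambda'}}\bigvee V}$ are injective into $\mathrm{HWV}_{\pmb{\lambda}}\bigvee V$ and $\mathrm{HWV}_{\pmb{\lambda}}\bigwedge V$ respectively.

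Finally, applying this to both $\pmb{\lambda}$ and $\pmb{\lambda'}$ produces, for each of the two pairs of highest weight spaces occurring in part (i), injective linear maps in both directions; hence the two spaces in each pair have the same finite dimension and all four maps are isomorphisms, which is (i). (One may instead quote the dimension equalities directly from the Kronecker-coefficient identities recorded in the preliminaries, e.g. $\dim\mathrm{HWV}_{\pmb{\lambda'}}\bigwedge V=g((\pmb{\lambda'})')=g(\pmb{\lambda})=\dim\mathrm{HWV}_{\pmb{\lambda}}\bigvee V$, and conclude from injectivity.) For even $d$ the argument runs word for word, using Theorem~\ref{th:spanning-sets}(ii), Lemma~\ref{lemma:ats}(d) (so that the relevant matrix already equals $\pm$ its own transpose within the symmetric-to-symmetric and alternating-to-alternating settings), and the dimension identity $g(\pmb{\lambda})=g(\pmb{\lambda'})$, which holds for even $d$. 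I expect the main obstacle to be precisely this passage from surjectivity of the full index map to bijectivity of its restriction to the highest weight subspace; it is resolved by the integrality of the coefficients $a,b$ together with the transpose-symmetry $B=\pm A^{\top}$ furnished by Theorem~\ref{th:duality2}.
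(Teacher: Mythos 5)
Your argument is essentially the paper's own proof: expand $\Delta$ in the standard weight bases, identify (via Theorem~\ref{th:duality2}) the source and target highest weight subspaces as the column and row spaces of the transition matrix $A=(a(T,S))$, and use that a matrix restricts to an isomorphism from its row space to its column space. The one point where you are more careful than the paper's write-up is genuinely worth making: the paper asserts this last linear-algebra fact without comment, but over $\mathbb{C}$ (with transpose, not conjugate transpose) a row space can be isotropic for the bilinear pairing, so the claim as written really does rely on the integrality of the entries $a(T,S)$. Your Gram-matrix step --- integer entries, hence $\ker_{\mathbb{R}}(A^{\top}A)=\ker_{\mathbb{R}}(A)$, hence $\mathrm{rank}(A^{\top}A)=\mathrm{rank}(A)$ over $\mathbb{C}$ by field-invariance of rank --- supplies exactly the justification the paper leaves implicit. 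Two small remarks: the opening sentence that ``surjectivity is already contained in Theorem~\ref{th:spanning-sets}'' only covers surjectivity of the full index maps, not of their restrictions to the highest weight subspaces, so the final dimension count (or reverse injection at $\pmb{\lambda'}$) is genuinely needed and not just an alternative; and in fact the rank identity $\mathrm{rank}(A^{\top}A)=\mathrm{rank}(A)$ by itself already gives surjectivity of the restriction at the same stroke as injectivity, which is how the paper closes.
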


\begin{proof}
    Surjectivity of the maps follows from Theorem~\ref{th:spanning-sets}.
    We present the proof for the map $\Delta$ and odd $d$, 
    the other cases are very similar. 
    Since $\mathrm{HWV}_{\pmb{\lambda}} \bigvee  V \subset  \mathrm{WV}_{\pmb{\lambda}} \bigvee  V,$ 
    we can view the map $\Delta: \mathrm{WV}_{\pmb{\lambda'}} \bigwedge  V \to \mathrm{WV}_{\pmb{\lambda}} \bigvee  V$ with extended range and record its coordinates in the standard basis. For $T \in B^\mathrm{lex}(\pmb{\lambda'})$ 
    we have: 
    $$
        \Delta(\bigwedge e_T) = \Delta_T = \sum_{S \in A^\mathrm{lex}(\pmb{\lambda})} a(T,S) \bigvee e_S, 
    $$
    and hence the $|A^{\mathrm{lex}}(\pmb{\lambda})|\times |B^{\mathrm{lex}}(\pmb{\lambda'})|$ matrix $\mathbf{A}^t(S,T) = (a(T, S))^{t}$ is a transition matrix of the extended map $\Delta$. By the duality Theorem~\ref{th:duality2}, the row and column spaces of this matrix are:
    $$
        \mathrm{row}(\mathbf{A}^t) = \mathrm{HWV}_{\pmb{\lambda'}} \bigwedge  V, 
        \qquad \mathrm{col}(\mathbf{A}^t) = \mathrm{HWV}_{\pmb{\lambda}} \bigvee  V. 
    $$
    Since the restriction $\mathbf{A}^t: \mathrm{row}(\mathbf{A}^t) \to \mathrm{col}(\mathbf{A}^t)$ is an isomorphism, so is $\Delta: \mathrm{HWV}_{\pmb{\lambda'}} \bigwedge  V 
    \to \mathrm{HWV}_{\pmb{\lambda}} \bigvee  V 
    $ an isomorphism as well.
\end{proof}

\begin{remark}
In particular, the isomorphisms imply the following known equalities of the corresponding dimensions given by Kronecker coefficients: 
$$
\begin{matrix}
    \text{for odd }d:
    & \dim \mathrm{HWV}_{\pmb{\lambda}} \bigvee  V 
    = g(1 \times m, \pmb{\lambda}) = g(m \times 1, \pmb{\lambda'}) = \dim \mathrm{HWV}_{\pmb{\lambda'}} \bigwedge V, 
    \vspace{0.5em}
    \\
    \text{for even }d:
    & \dim  \mathrm{HWV}_{\pmb{\lambda}} \bigvee  V 
    = g(1 \times m, \pmb{\lambda}) = g(1 \times m, \pmb{\lambda'}) = \dim  \mathrm{HWV}_{\pmb{\lambda'}} \bigvee  V, 
    \vspace{0.5em}
    \\
    ~&\dim  \mathrm{HWV}_{\pmb{\lambda}} \bigwedge  V 
    = g(m \times 1, \pmb{\lambda}) = g(m \times 1, \pmb{\lambda'}) = \dim  \mathrm{HWV}_{\pmb{\lambda}} \bigwedge  V. 
\end{matrix}
$$
\end{remark}

\section{Algebraic relations}\label{sec:linear}
To describe our main result on algebraic relations among highest weight vectors $\Delta$ and $\nabla$ we first need to introduce the notion of boundary tables. 
\begin{definition}[Boundary tables]
    For a $d$-tuple of partitions $\pmb{\lambda}$ and 
    $S \in A(\pmb{\lambda})$ let us define the set $\partial^{(\ell)}_{i j}(S)$ of \textit{boundary tables} of $S$ as 
    the set of tables $S'$ which result in \textit{replacing a single letter $j$ in $\ell$-th row of $S$ with the letter $i$}, so that the order of columns remains unchanged. For example, for $d = \ell = 1$ and $(i, j) = (1,2)$ we have 
    $\partial^{(1)}_{1 2}(11212) = \{ (11\mathbf{1}12), (1121\mathbf{1}) \}$.
    Let us note that the set $\partial^{(\ell)}_{i j}(S)$ contains the tables of the same weight $\alpha^{(\ell)}_{i j} \pmb{\lambda}$ which changes the $\ell$-th partition $\lambda$ in  $\pmb{\lambda}$ with the partition $\lambda + e_i - e_j$. 
\end{definition}

For $\mathcal{X} \subseteq A(\pmb{\lambda})$ we use the following notation to denote the specific linear combinations of  standard basis vectors: 
$$
    \bigvee \mathcal{X} 
    := \sum_{X \in \mathcal{X}} \bigvee e_X, \quad 
    \bigwedge\mathcal{X} 
    := \sum_{X \in \mathcal{X}} \bigwedge e_X.
$$

We now state the main description of algebraic relations among the vectors $\Delta$ and $\nabla$. 

\begin{theorem}[Relations on spanning sets]\label{th:relations1}
    Let $\pmb{\lambda}$ be a $d$-tuple of partitions of $m$. 
    
    (i) For odd $d$ the following linear relations hold for $\ell \in [d]$ and $1 \le i < j \le \ell((\lambda^{(\ell)})')$: 
    \begin{align}
        & \Delta\left(\bigwedge \partial^{(\ell)}_{j i}(X)\right) 
        = \sum_{T \in \partial^{(\ell)}_{j i}(X)} \Delta_{T}
        = 0, \quad X \in B(\alpha^{(\ell)}_{i j} \pmb{\lambda'}),
        \label{eq:relations-delta1} \\
        &  
        \nabla\left(\bigvee \partial^{(\ell)}_{j i}(Y)\right)
        = \sum_{S \in \partial^{(\ell)}_{j i}(Y)}\ \nabla_{S} 
         = 0, \quad Y \in A(\alpha^{(\ell)}_{i j} \pmb{\lambda'}).
         \label{eq:relations-nabla1}
    \end{align}
    Moreover, we have 
    \begin{align*}
    \mathrm{Ker} \left(\Delta: \mathrm{WV}_{\pmb{\lambda'}} \bigwedge V \to \mathrm{HWV}_{\pmb{\lambda}} \bigvee  V \right) 
    &= \mathrm{span} \left\{\bigwedge \partial^{(\ell)}_{j i}(X) : \ell \in [d], i < j, X \in B(\alpha^{(\ell)}_{i j} \pmb{\lambda'}) \right\},  \\
    \mathrm{Ker}\left(\nabla: \mathrm{WV}_{\pmb{\lambda'}} \bigvee V \to \mathrm{HWV}_{\pmb{\lambda}} \bigwedge  V \right) 
    &= \mathrm{span} \left\{\bigvee \partial^{(\ell)}_{j i}(Y) : \ell \in [d], i < j, Y \in A(\alpha^{(\ell)}_{i j} \pmb{\lambda'}) \right\}, 
    \end{align*}
    i.e. any linear relation among $\{\Delta_T \}$ or $\{ \nabla_S \}$ follows linearly from \eqref{eq:relations-delta1} or \eqref{eq:relations-nabla1}.
    
        (ii) For even $d$ the following linear relations hold for $\ell \in [d]$ and $1 \le i < j \le \ell((\lambda^{(\ell)})')$: 
    \begin{align}
        & \Delta\left(\bigvee \partial^{(\ell)}_{j i}(Y)\right)
        = \sum_{S \in \partial^{(\ell)}_{j i}(Y)}\ \Delta_{S}
        = 0, \quad \forall\, Y \in A(\alpha^{(\ell)}_{i j} \pmb{\lambda'}),
        \label{eq:relations-delta2} \\
        &  
        \nabla\left(\bigwedge \partial^{(\ell)}_{j i}(X)\right)
        = \sum_{T \in \partial^{(\ell)}_{j i}(X)} \nabla_{T} 
         = 0, \quad \forall\, X \in B(\alpha^{(\ell)}_{i j} \pmb{\lambda'}).
        \label{eq:relations-nabla2}
    \end{align}
    Moreover, we have 
    \begin{align*}
    \mathrm{Ker} \left(\Delta: \mathrm{WV}_{\pmb{\lambda'}} \bigvee V \to \mathrm{HWV}_{\pmb{\lambda}} \bigvee  V \right) &= \mathrm{span} \left\{\bigvee \partial^{(\ell)}_{j i}(Y) : \ell \in [d], i < j, Y \in A(\alpha^{(\ell)}_{i j} \pmb{\lambda'}) \right\}, \\
    \mathrm{Ker}\left(\nabla: \mathrm{WV}_{\pmb{\lambda'}} \bigwedge V \to \mathrm{HWV}_{\pmb{\lambda}} \bigwedge  V \right) &= \mathrm{span} \left\{\bigwedge \partial^{(\ell)}_{j i}(X) : \ell \in [d], i < j, X \in B(\alpha^{(\ell)}_{i j} \pmb{\lambda'}) \right\}, 
    \end{align*}
    i.e. any linear relation among $\{\Delta_T \}$ or $\{ \nabla_S \}$ follows linearly from \eqref{eq:relations-delta2} or \eqref{eq:relations-nabla2}.

    (iii) 
    Any algebraic relation of the form 
    $$\sum \alpha_T \Delta_{T^{(1)}} \vee\cdots\vee \Delta_{T^{(k)}} = 0 \qquad \text{or} \qquad \sum \beta_S \nabla_{S^{(1)}} \wedge \cdots \wedge \nabla_{S^{(k)}} = 0$$ 
    among highest weight vectors $\{\Delta \}$ and $\{\nabla \}$ (for all $m$ and $d$-tuples $\pmb{\lambda}$ of partitions of $m$) 
    follows from the linear relations listed above.
\end{theorem}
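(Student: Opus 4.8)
The plan is to prove all four assertions (the maps $\Delta$ and $\nabla$, for odd and for even $d$) by the same two–step scheme, and I describe it for $\Delta$ with $d$ odd; the other cases differ only in which part of Theorem~\ref{th:duality2} and Theorem~\ref{th:isomorphism} one invokes. Fix $\ell\in[d]$ and $i<j$, and for $w\in V=(\mathbb{C}^n)^{\otimes d}$ let $E^{(\ell)}_{ji}$ denote the operator acting as the elementary matrix $e_{ji}$ on the $\ell$-th tensor factor of $V$ and as a derivation on $\bigvee V$ and $\bigwedge V$. Since $j>i$, this is a lowering operator of $\mathrm{GL}(n)^{\times d}$, while its transpose $E^{(\ell)}_{ij}$ is a raising operator and therefore annihilates every highest weight vector. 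A direct unwinding of the definitions of $\partial^{(\ell)}_{j i}$ and of the derivation action gives, for every table $X$ of weight $\alpha^{(\ell)}_{i j}\pmb{\lambda'}$,
\[
E^{(\ell)}_{ji}\ \bigwedge e_X=\sum_{T\in\partial^{(\ell)}_{j i}(X)}\bigwedge e_T=\bigwedge\partial^{(\ell)}_{j i}(X),
\]
with the convention that a term vanishes whenever the table acquires a repeated column; the same identity holds verbatim with $\bigvee$ in place of $\bigwedge$.

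\emph{Step 1: the relations hold.} Writing $\Delta_T=\sum_{S\in A^{\mathrm{lex}}(\pmb\lambda)}a(T,S)\bigvee e_S$, the claim $\sum_{T\in\partial^{(\ell)}_{j i}(X)}\Delta_T=0$ amounts to $\sum_{T\in\partial^{(\ell)}_{j i}(X)}a(T,S)=0$ for each $S$. By the duality Theorem~\ref{th:duality2} (equivalently Lemma~\ref{lemma:ats}(c)) one has $a(T,S)=(-1)^{\pmb\lambda}b(S,T)=(-1)^{\pmb\lambda}\langle\nabla_S,\bigwedge e_T\rangle$, and hence, using the identity above,
\[
\sum_{T\in\partial^{(\ell)}_{j i}(X)}a(T,S)=(-1)^{\pmb\lambda}\Bigl\langle\nabla_S,\ E^{(\ell)}_{ji}\bigwedge e_X\Bigr\rangle=(-1)^{\pmb\lambda}\Bigl\langle E^{(\ell)}_{ij}\nabla_S,\ \bigwedge e_X\Bigr\rangle=0,
\]
where the middle equality uses that the pairing $\langle\cdot,\cdot\rangle$ is, up to a nonzero scalar depending only on $m$, the standard inner product on $\bigwedge^m V$, for which $E^{(\ell)}_{ij}$ is the adjoint of $E^{(\ell)}_{ji}$, and the last equality uses that $\nabla_S$ is a highest weight vector. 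This proves \eqref{eq:relations-delta1}; for \eqref{eq:relations-nabla1} one instead uses $b(S,T)=(-1)^{\pmb\lambda}a(T,S)$ together with the fact that $\Delta_T$ is a highest weight vector, and the even-$d$ relations \eqref{eq:relations-delta2}, \eqref{eq:relations-nabla2} are obtained the same way from Lemma~\ref{lemma:ats}(d).

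\emph{Step 2: the relations generate the kernel.} By Theorem~\ref{th:spanning-sets} the map $\Delta\colon\mathrm{WV}_{\pmb{\lambda'}}\bigwedge V\to\mathrm{HWV}_{\pmb\lambda}\bigvee V$ is surjective, so $\dim\ker\Delta=\dim\mathrm{WV}_{\pmb{\lambda'}}\bigwedge V-\dim\mathrm{HWV}_{\pmb\lambda}\bigvee V$. Set $\mathcal R:=\mathrm{span}\{\bigwedge\partial^{(\ell)}_{j i}(X)\}=\sum_{\ell,\ i<j}E^{(\ell)}_{ji}\bigl(\mathrm{WV}_{\alpha^{(\ell)}_{i j}\pmb{\lambda'}}\bigwedge V\bigr)$; by Step 1 we have $\mathcal R\subseteq\ker\Delta$, so it suffices to match dimensions. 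Decomposing $\bigwedge V$ into $\mathrm{GL}(n)^{\times d}$-irreducibles, the weight space $\mathrm{WV}_{\pmb{\lambda'}}\bigwedge V$ splits as $\mathrm{HWV}_{\pmb{\lambda'}}\bigwedge V$ (the contributions of the copies of $V(\pmb{\lambda'})$) plus the contributions of the copies of $V(\pmb\mu)$ with $\pmb\mu\succ\pmb{\lambda'}$. The latter subspace equals $\mathcal R$: it is contained in $\mathcal R$ because, by PBW, $V(\pmb\mu)=U(\mathfrak n^-)v_{\pmb\mu}$, so every weight vector of weight $\ne\pmb\mu$ lies in a sum of images of simple lowering operators $E^{(\ell)}_{i+1,i}$ applied to weight spaces, and these are among the generators of $\mathcal R$; conversely every generator of $\mathcal R$ is obtained by applying a lowering operator to a vector of strictly higher weight, which keeps it inside those same contributions. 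Hence $\dim\mathcal R=\dim\mathrm{WV}_{\pmb{\lambda'}}\bigwedge V-\dim\mathrm{HWV}_{\pmb{\lambda'}}\bigwedge V$, and by the duality isomorphism Theorem~\ref{th:isomorphism}(i) we have $\dim\mathrm{HWV}_{\pmb{\lambda'}}\bigwedge V=\dim\mathrm{HWV}_{\pmb\lambda}\bigvee V$. Therefore $\dim\mathcal R=\dim\ker\Delta$, and together with $\mathcal R\subseteq\ker\Delta$ this forces $\mathcal R=\ker\Delta$, which is exactly the assertion that every linear relation follows from \eqref{eq:relations-delta1}. The remaining three cases run identically, with Theorem~\ref{th:isomorphism}(i) replaced by the $\nabla$-statement or by Theorem~\ref{th:isomorphism}(ii).

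The conceptual crux, and the only genuinely new ingredient here, is the combination in Step 1 of the identity $\bigwedge\partial^{(\ell)}_{j i}(X)=E^{(\ell)}_{ji}\bigwedge e_X$ with the orthogonality of highest weight vectors to images of lowering operators; everything else is bookkeeping. The point I expect to require the most care is Step 2's identification of $\mathcal R$ with the non-highest-weight part of the weight space: one must argue carefully that the span of the boundary tables for \emph{all} $i<j$ (not merely the simple reflections $j=i+1$) coincides with the span of all images of lowering operators, and the quantitative input $\dim\mathrm{HWV}_{\pmb{\lambda'}}\bigwedge V=\dim\mathrm{HWV}_{\pmb\lambda}\bigvee V$ is precisely what Theorems~\ref{th:duality2} and~\ref{th:isomorphism} were built to provide — so in effect the difficulty of this theorem has already been absorbed into those results.
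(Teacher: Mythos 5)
Your proposal follows essentially the same route as the paper. Both Step~1's are the combination of the duality $a(T,S)=\pm b(S,T)$ with the fact that a highest weight vector pairs to zero against images of lowering operators, and both Step~2's use $\mathcal{R}\subseteq\ker\Delta$ together with the dimensional input supplied by the duality isomorphism (you phrase it as a dimension count via rank--nullity; the paper phrases it as $\ker\Delta\cap\mathrm{HWV}_{\pmb{\lambda'}}=0$ plus the complementarity $\mathrm{HWV}\oplus\mathcal R=\mathrm{WV}$, which is the same content).

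The one place where you should be more careful is the adjointness step. For $\bigwedge^m V$ your claim is correct: since every $T\in B^{\mathrm{lex}}$ has trivial column stabilizer, the pairing making $\{\bigwedge e_T\}$ orthonormal is $1/m!$ times the restriction of the invariant inner product on $\bigotimes^m V$, so $E^{(\ell)}_{ij}$ is genuinely adjoint to $E^{(\ell)}_{ji}$, and the computation you wrote out for \eqref{eq:relations-delta1} is sound. But the same claim \emph{fails} for the pairing on $\bigvee^m V$ that extracts the coefficient of $\bigvee e_S$: because the tensor inner product gives $\langle\bigvee e_S,\bigvee e_S\rangle_{\otimes}=m!\,\lVert S\rVert$, the coefficient pairing differs from the $U(n)^{\times d}$-invariant (Bombieri/Fischer) pairing by the \emph{non-constant} factor $\lVert S\rVert$, and one can check on small examples (e.g.\ $n=m=2$) that $\langle E_{12}\bigvee e_S,\bigvee e_{S'}\rangle\ne\langle\bigvee e_S,E_{21}\bigvee e_{S'}\rangle$ under the coefficient pairing. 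So the sentence ``one instead uses $b(S,T)=(-1)^{\pmb\lambda}a(T,S)$ together with the fact that $\Delta_T$ is a highest weight vector'' hides a real step in the cases \eqref{eq:relations-nabla1} and \eqref{eq:relations-delta2}, \eqref{eq:relations-nabla2}: you need to show that $\langle H,\bigvee\partial^{(\ell)}_{ji}(Y)\rangle=0$ for a highest weight polynomial $H$, where the pairing is coefficient extraction, not the invariant one. The paper's Lemma~\ref{lemma:raising1} proves exactly this by expanding $E^{(\ell)}_{ij}(H)=0$ in the standard basis and matching the boundary sets $\partial$ and $\partial^*$ under column sorting; that argument does not assume adjointness and handles the stabilizer factor correctly. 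Your proof would be complete if you replaced the adjointness appeal in the $\bigvee$ cases with that (or an equivalent Fischer-pairing computation); as written, those cases have a gap that happens not to affect the conclusion but does affect the justification.

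Your Step~2 is correct, and the identification of $\mathcal R$ with the non--highest-weight part of the weight space is precisely the content the paper packages into the orthogonal decomposition $HB^{\perp}\oplus HB=B$ of Lemma~\ref{lemma:raising1}; the two arguments buy the same thing by slightly different bookkeeping.
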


\begin{remark}\label{remark:kernel}
    As we will see below, the span of kernel comes from the action of corresponding Lie algebra, which is generated by Chevalley generators, and hence
    spanning set of the kernel can be narrowed to vectors of the form $\bigwedge \partial^{(\ell)}_{i+1, i}(X)$. The domain can also be narrowed to the (skew-)invariant subspace w.r.t. the action of corresponding parabolic subgroup $S_{\pmb{\lambda'}}$, see Remark~\ref{remark:domain}.
\end{remark}

\subsection{Raising and lowering operators}
Let $\mathfrak{g}:= \mathfrak{gl}(n)^{\times d}$ be the Lie algebra of $G:= \mathrm{GL}(n)^{\times d}$ acting on $V =(\mathbb{C}^n)^{\otimes d}$ diagonally. Then the corresponding action of Lie algebra for $v = v_1 \otimes\ldots\otimes v_m \in \bigotimes^m V$ and $A \in \mathfrak{g}$ is
$$
    A \cdot v = A v_1 \otimes\ldots\otimes v_m + \ldots + v_1 \otimes\ldots\otimes Av_m,
$$
where $Av_i$ is an ordinary matrix multiplication. The action on $\bigwedge V$ and $\bigvee V$ is analogous. 
Let $E_{i,j}$ be the $n \times n$ matrix with a single entry $1$  at $(i,j)$ and zero elsewhere. For $i < j$ the operator $E^{(1)}_{i,j}: v \mapsto (E_{i,j},0,0,\ldots) v$ is called a {\it raising operator} in direction $1$, and for $i > j$ it is called \textit{lowering operator}. Raising (or lowering) operators are consistent with the weight lattice: if $v \in A(\pmb{\lambda})$ of weight $\pmb{\lambda} = (\lambda,\ldots)$, then the vector $E^{(1)}_{i,j}(v)$ is of weight 
$\alpha^{(1)}_{ij}\pmb{\lambda} := (\lambda+e_i-e_j,\ldots)$,
i.e. increased by a positive root.
For other directions, the operators $E^{(\ell)}_{ij}, \alpha^{(\ell)}_{ij}$ are defined similarly. 
We also denote $(E^{(\ell)}_{ij})^* = E_{ji}^{(\ell)}, (\alpha^{(\ell)}_{ij})^* = \alpha_{ji}^{(\ell)}$ the pair corresponding to negative roots. 

This allows us to recover criteria of being highest weight vector. For vector $v$ of weight $\pmb{\lambda}$ we have
\begin{align}\label{eq:hwv-def1}
    v \in \mathrm{HWV}_{\pmb{\lambda}}~ \bigotimes^m V \iff \forall \ell \in [d], i < j: E^{(\ell)}_{i j}(v) = 0,
\end{align}
i.e. weight vector $v \in V$ vanishing by all raising operators is highest weight vector.

\vspace{0.5em}

The following easy fact allows us to map action of raising matrix units to $\partial$ operations.

\begin{lemma}\label{lemma:eij-action}
    Let $\pmb{\lambda}$ be a $d$-tuple of partitions of $m$. For 
    $S \in A(\pmb{\lambda})$ we have 
    $$
    \partial^{(\ell)}_{i j}(S) = \{S' \in A(\alpha^{(\ell)}_{i j} \pmb{\lambda}): \langle E^{(\ell)}_{i j} \cdot e_{S(1)}\otimes\ldots\otimes e_{S(m)}, e_{S'(1)}\otimes\ldots\otimes e_{S'(m)} \rangle = 1 \}.
    $$
    In particular, $E^{(\ell)}_{ij}\left(\bigwedge e_S\right) = \bigwedge \partial^{(\ell)}_{ij}(S)$ and $E^{(\ell)}_{ij}\left(\bigvee e_S\right) = \bigvee \partial^{(\ell)}_{ij}(S)$.
\end{lemma}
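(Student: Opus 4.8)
The plan is to unwind the Lie-algebra action of $E^{(\ell)}_{ij}$ on the decomposable tensor indexed by the columns of $S$ and to match the resulting sum term-by-term with the set $\partial^{(\ell)}_{ij}(S)$.

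First I would recall that on $\mathbb{C}^n$ we have $E_{ij}e_k = \delta_{jk}e_i$, so on $V = (\mathbb{C}^n)^{\otimes d}$ the operator $E^{(\ell)}_{ij}$ acts only on the $\ell$-th tensor slot and sends a basis vector $e_{\mathbf{i}}$ with $\mathbf{i}=(i_1,\ldots,i_d)$ to $e_{\mathbf{i}'}$, where $\mathbf{i}'$ agrees with $\mathbf{i}$ except $i'_\ell = i$, provided $i_\ell = j$, and to $0$ otherwise. Writing $S$ in its column form $S(1),\ldots,S(m)$, the derivation rule then gives
\[
    E^{(\ell)}_{ij}\cdot\bigl(e_{S(1)}\otimes\cdots\otimes e_{S(m)}\bigr) = \sum_{p=1}^m e_{S(1)}\otimes\cdots\otimes\bigl(E^{(\ell)}_{ij}e_{S(p)}\bigr)\otimes\cdots\otimes e_{S(m)} = \sum_{p} e_{S^{(p)}(1)}\otimes\cdots\otimes e_{S^{(p)}(m)},
\]
where $p$ runs over the positions at which row $\ell$ of $S$ has the letter $j$, and $S^{(p)}$ denotes the table obtained from $S$ by replacing that single occurrence of $j$ (in row $\ell$, column $p$) by $i$, keeping the column order. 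By the definition of boundary tables, as $p$ ranges over these positions $S^{(p)}$ ranges exactly over $\partial^{(\ell)}_{ij}(S)$, all of weight $\alpha^{(\ell)}_{ij}\pmb{\lambda}$; moreover this correspondence is injective because $S^{(p)}$ differs from $S$ precisely in column $p$, so $S^{(p_1)} = S^{(p_2)}$ forces $p_1 = p_2$. Reading off the coefficient of a standard basis vector $e_{S'(1)}\otimes\cdots\otimes e_{S'(m)}$ in the right-hand side then yields the asserted description of $\partial^{(\ell)}_{ij}(S)$, with every nonzero coefficient equal to $1$.

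For the ``in particular'' clause I would observe that the derivation action of $E^{(\ell)}_{ij}$ on $\bigotimes^m V$ is symmetric in the $m$ tensor slots, hence commutes with the $S_m$-action permuting those slots, and therefore with the operators $\mathrm{Sym}$ and $\mathrm{Alt}$. Applying $\mathrm{Alt}$ (resp.\ $\mathrm{Sym}$) to both sides of the displayed identity, and using that $\mathrm{Alt}(e_{X(1)}\otimes\cdots\otimes e_{X(m)}) = \bigwedge e_X$ (resp.\ $\mathrm{Sym}(e_{X(1)}\otimes\cdots\otimes e_{X(m)}) = \bigvee e_X$) term-by-term, gives $E^{(\ell)}_{ij}(\bigwedge e_S) = \bigwedge\partial^{(\ell)}_{ij}(S)$ and $E^{(\ell)}_{ij}(\bigvee e_S) = \bigvee\partial^{(\ell)}_{ij}(S)$.

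There is no real obstacle here; the only points that deserve a line of care are the injectivity of $p\mapsto S^{(p)}$ — so that passing between the position-indexed sum and the set $\partial^{(\ell)}_{ij}(S)$ creates no multiplicities — and the remark that alternation and symmetrization introduce no extra signs, since each $\bigwedge e_{S'}$ (resp.\ $\bigvee e_{S'}$) is formed with the columns in the order inherited from $S$.
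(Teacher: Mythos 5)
Your argument is correct and is the natural one; the paper states the lemma without proof, calling it an ``easy fact,'' and your unwinding of the Leibniz rule for the Lie-algebra action, followed by applying $\mathrm{Alt}$ and $\mathrm{Sym}$ using that the derivation commutes with the $S_m$-action, is exactly the intended reasoning. The two points you flag (injectivity of $p\mapsto S^{(p)}$, so coefficients are $0$ or $1$, and the fact that elements of $\partial^{(\ell)}_{ij}(S)$ inherit the column order of $S$ so no extra signs appear under $\mathrm{Alt}$) are indeed the only places requiring care.
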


\vspace{0.5em}

\begin{lemma}[HWV relations]\label{lemma:raising1}
    Let $\pmb{\lambda}$ be a $d$-tuple of partitions of $m$ and 
    $$
        H = \sum_{X \in C^{\mathrm{lex}}(\pmb{\lambda})} h_X\  \cdot {\textstyle\bigDiamond} e_X \in \mathrm{WV}_{\pmb{\lambda}} {\textstyle\bigDiamond} V, 
    $$ 
    where we denote $(C, {\textstyle\bigDiamond})$ is $(A, \vee)$ or $(B, \wedge)$. Then
    $$
        \quad H \in \mathrm{HWV}_{\pmb{\lambda}} {\textstyle\bigDiamond} V \quad\iff\quad \forall \ell \in [d],\, i < j,\, \forall\, Y \in C^{\mathrm{lex}}(\alpha\pmb{\lambda}):\quad \left\langle H, {\textstyle\bigDiamond}\partial^*(Y) \right\rangle = 0,
    $$
    where $\partial^* = \partial^{(\ell)}_{j i}$ for $\alpha = \alpha^{(\ell)}_{i j}$. 
\end{lemma}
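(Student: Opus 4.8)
The plan is to chain together three equivalences: the raising-operator criterion \eqref{eq:hwv-def1} for being a highest weight vector; the translation of raising and lowering operators into the boundary operations $\partial$ supplied by Lemma~\ref{lemma:eij-action}; and the adjointness of $E^{(\ell)}_{ij}$ with $E^{(\ell)}_{ji}$ with respect to the standard pairing on $\bigcirc V$.

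First I would note that $\mathrm{Sym}$ and $\mathrm{Alt}$ commute with the diagonal $\mathfrak{gl}(n)^{\times d}$-action, so $\bigvee^m V$ and $\bigwedge^m V$ are $\mathfrak{gl}(n)^{\times d}$-submodules of $\bigotimes^m V$ and criterion \eqref{eq:hwv-def1} restricts: $H \in \mathrm{HWV}_{\pmb{\lambda}}\bigcirc V$ if and only if $E^{(\ell)}_{ij}(H) = 0$ for all $\ell \in [d]$, $i < j$. Fix such $\ell, i, j$ and put $\alpha = \alpha^{(\ell)}_{ij}$. Then $E^{(\ell)}_{ij}(H)$ lies in $\mathrm{WV}_{\alpha\pmb{\lambda}}\bigcirc V$, which is spanned by the standard basis $\{\bigcirc e_Y : Y \in C^{\mathrm{lex}}(\alpha\pmb{\lambda})\}$; since the standard pairing is nondegenerate on each weight space, $E^{(\ell)}_{ij}(H) = 0$ if and only if $\langle E^{(\ell)}_{ij}(H), \bigcirc e_Y\rangle = 0$ for every $Y \in C^{\mathrm{lex}}(\alpha\pmb{\lambda})$. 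Combining this with Lemma~\ref{lemma:eij-action} applied to the (lowering) pair $(j,i)$, which gives $E^{(\ell)}_{ji}(\bigcirc e_Y) = \bigcirc\partial^{(\ell)}_{ji}(Y)$, the statement reduces to the identity $\langle E^{(\ell)}_{ij}(H), \bigcirc e_Y\rangle = \langle H, \bigcirc\partial^{(\ell)}_{ji}(Y)\rangle$, i.e. to the adjointness $(E^{(\ell)}_{ij})^{*} = E^{(\ell)}_{ji}$ on $\bigcirc V$. Note the weights match: $Y$ has $\ell$-th component $\lambda + e_i - e_j$, so $\partial^{(\ell)}_{ji}(Y)$ has $\ell$-th component $\lambda$, and both sides pair weight-$\pmb{\lambda}$ vectors.

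To establish this adjointness I would lift everything to ordered tables: on $\bigotimes^m V$ the operators $E^{(\ell)}_{ij}$ and $E^{(\ell)}_{ji}$ act by mutually transpose matrices in the orthonormal basis $\{e_X\}$ — explicitly, at the level of ordered $d \times m$ tables, $X' \in \partial^{(\ell)}_{ij}(X) \iff X \in \partial^{(\ell)}_{ji}(X')$, the reverse single-letter move — so they are mutual adjoints there; since both operators preserve the symmetric and alternating subspaces and $\mathrm{Sym}, \mathrm{Alt}$ are (scalar multiples of) orthogonal projections commuting with them, the adjointness descends to $\bigvee V$ and $\bigwedge V$, and hence to the coefficient pairing used in the statement. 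The main obstacle is precisely the combinatorial bookkeeping in this last step — reconciling the set-valued operation $\partial^{(\ell)}_{ij}$ (with its column-order and lex conventions, and the vanishing of $\bigwedge e_{Y'}$ when a boundary table acquires a repeated column in the $\wedge$ case) with the matrix coefficients of $E^{(\ell)}_{ij}$ on $\bigcirc V$ — but passing through ordered tables, where a single-letter substitution and its reverse are manifestly in bijection, handles this uniformly, after which projecting by $\mathrm{Sym}$ or $\mathrm{Alt}$ finishes the argument.
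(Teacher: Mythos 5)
Your proof takes essentially the same route as the paper's — apply the raising‑operator criterion \eqref{eq:hwv-def1}, translate $E^{(\ell)}_{ij}$ and $E^{(\ell)}_{ji}$ into boundary‑table operations via Lemma~\ref{lemma:eij-action}, and reorganize — with your reformulation as an adjointness statement $(E^{(\ell)}_{ij})^{*}=E^{(\ell)}_{ji}$ being a clean repackaging of what the paper does by reindexing a double sum. However, the claim that this adjointness ``descends to $\bigvee V$ and $\bigwedge V$, and hence to the coefficient pairing used in the statement'' is the crux and it fails in the $\vee$ case.

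On $\bigvee^m V$ the form induced from the orthonormal one on $\bigotimes^m V$ satisfies $\langle\bigvee e_S,\bigvee e_S\rangle = m!\,||S||$, so it differs from the coefficient pairing by the stabilizer weights $||S||$, and in that basis $E^{(\ell)}_{ij}$ and $E^{(\ell)}_{ji}$ are \emph{not} transposes once columns repeat. Concretely, take $d=2$, $m=3$, $\pmb{\lambda}=((2,1),(2,1))$, $S_1=\left(\begin{smallmatrix}112\\121\end{smallmatrix}\right)$ with $||S_1||=1$, $S_2=\left(\begin{smallmatrix}112\\112\end{smallmatrix}\right)$ with $||S_2||=2$, and $Y=\left(\begin{smallmatrix}111\\112\end{smallmatrix}\right)$. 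Then $E^{(1)}_{12}(\bigvee e_{S_1})=E^{(1)}_{12}(\bigvee e_{S_2})=\bigvee e_Y$, so $E^{(1)}_{12}(h_1\bigvee e_{S_1}+h_2\bigvee e_{S_2})=0$ iff $h_1+h_2=0$; but $E^{(1)}_{21}(\bigvee e_Y)=\bigvee\partial^{(1)}_{21}(Y)=2\bigvee e_{S_1}+\bigvee e_{S_2}$, so $\langle H,\bigvee\partial^{(1)}_{21}(Y)\rangle = 2h_1+h_2$ — a different hyperplane. The $\wedge$ case is unaffected because $(0,1)$-tables have trivial stabilizers, so the sorting permutation $\pi$ in the paper's $T\xrightarrow{\partial}T'\xrightarrow{\pi}T''$ bijection is unique and the coefficient pairing agrees (up to a global scalar) with the induced inner product. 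The $\vee$ case instead requires weighting the terms of $\bigvee\partial^{*}(Y)$ by $||\cdot||$ — i.e.\ using the induced form rather than the coefficient pairing. You should be aware that the paper's own proof carries exactly the same omission: it works only the $(B,\wedge)$ case and dismisses $(A,\vee)$ as ``similar, without taking care of signs,'' which does not account for the multiplicities coming from repeated columns.
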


\begin{proof}
    Let us show the statement for the case $(C, {\textstyle\bigDiamond}) = (B, \wedge)$, the other case is signless (which is an easier) version of it. Let $(E, \partial, \alpha) := (E^{(\ell)}_{i j}, \partial^{(\ell)}_{ij}, \alpha^{(\ell)}_{ij})$ be operators for fixed $\ell \in [d], i < j$.
    We are enough to rewrite \eqref{eq:hwv-def1} into the standard basis using Lemma~\ref{lemma:eij-action}:
    \begin{align*}
        E(H) 
        = \sum_{T \in B^{\mathrm{lex}}(\pmb{\lambda'})} h_T \cdot E\left(\bigwedge e_T\right)
        = \sum_{T \in B^{\mathrm{lex}}(\pmb{\lambda'})} h_T 
            \sum_{T' \in \partial(T)} \bigwedge e_{T'}.
    \end{align*}
    Note, that $T'$ might be not a lex-table. But we still can change the order of summation without extra signs as follows: 
    $$
        E(H) = 
        \sum_{T'' \in B^{\mathrm{lex}}(\alpha\pmb{\lambda'})}\left(
            \sum_{T \in \partial^*(T'')} h_T
        \right) \cdot \bigwedge e_{T''} = 0.
    $$
    Indeed, if one derives $T \xrightarrow{\partial} T' \xrightarrow{\pi} T''$, where $\partial$ exchanges letter and $\pi$ sorts columns of $T'$, 
    then $T'' \xrightarrow{\pi^{-1}} T' \xrightarrow{\partial^*} T$ can also be derived, where the signs $\sgn(\pi) = \sgn(\pi^{-1})$ are the same. Thus, $H \in \mathrm{HWV}_{\pmb{\lambda}} \bigwedge V$ if and only if $\langle F, \wedge \partial^* T'' \rangle = 0$ for arbitrary $T'' \in B^{\mathrm{lex}}(\alpha\pmb{\lambda'})$, as desired. The case $(C,{\textstyle\bigDiamond}) = (A, \vee)$ is similar, without taking care of signs.
\end{proof}

\vspace{1em}

\begin{proof}[Proof of Theorem~\ref{th:relations1}]
    All four cases of the relations have identical proofs. Let us show it for odd $d$ and the map $\Delta: \mathrm{WV}_{\pmb{\lambda'}} \bigwedge V \to \mathrm{HWV}_{\pmb{\lambda}} \bigvee V$. 

    Fix $\ell \in [d]$ and $1 \le i < j \le \ell((\lambda^{(\ell)})')$ and set $(\partial, \partial^*, \alpha):=(\partial^{(\ell)}_{ij}, \partial^{(\ell)}_{ji}, \alpha^{(\ell)}_{ij})$.
    We apply Lemma~\ref{lemma:raising1} to the spanning set $\{\nabla_S\}$ of 
    $\mathrm{HWV}_{\pmb{\lambda'}} \bigwedge V$ 
    which is the dual space of $\mathrm{HWV}_{\pmb{\lambda}} \bigvee V$. 
    Since for all $S \in A(\pmb{\lambda})$ we can write $\nabla_S = \sum_{T \in B^{\mathrm{lex}}(\pmb{\lambda'})} b(S,T) \cdot \wedge e_T$, it follows that using the duality of highest weight vectors $\Delta$ and $\nabla$ we have 
    \begin{align*}
        \forall X &\in B(\alpha \pmb{\lambda'}),\ \forall S \in A(\pmb{\lambda}): &  &\sum_{T \in \partial^*(X)} b(S,T) = 0 &\iff\\
        \forall X &\in B(\alpha \pmb{\lambda'}),\ \forall S \in A(\pmb{\lambda}): & &\sum_{T \in \partial^*(X)} a(T,S) = 0 &\iff\\
        \forall X &\in B(\alpha \pmb{\lambda'}): & &\sum_{T \in \partial^*(X)} \Delta_{T} = \Delta\left(\bigwedge{\partial^*(X)}\right) = 0,
    \end{align*}
    as desired.

    Let us now prove the statement about the kernel of the map $\Delta$. (Again, the other cases have identical proofs.) 
    Denote $B_{\pmb{\lambda'}} = \mathrm{WV}_{\pmb{\lambda'}} \bigwedge V$, $HB_{\pmb{\lambda'}} = \mathrm{HWV}_{\pmb{\lambda'}} \bigwedge V$
    and $HB^{\perp}_{\pmb{\lambda'}} \subseteq B_{\pmb{\lambda'}}$ the subspace spanned by vectors $\bigwedge {\partial^*(X)}$. We will show that $\mathrm{Ker}(\Delta) = HB^{\perp}_{\pmb{\lambda'}}$.
    
    By Lemma~\ref{lemma:raising1} we know that $HB^{\perp}_{\pmb{\lambda'}} \oplus HB_{\pmb{\lambda'}} = B_{\pmb{\lambda'}}$, where $HB_{\pmb{\lambda'}} = \mathrm{HWV}_{\pmb{\lambda'}} \bigwedge V$.
    As we already showed above we have $\Delta\left(\bigwedge {\partial^*(X)}\right) = 0$, and hence $HB^{\perp}_{\pmb{\lambda'}} \subseteq \ker(\Delta)$. By Theorem~\ref{th:isomorphism}, we have $\ker(\Delta)\cap HB_{\pmb{\lambda'}} = 0$, and we conclude that $\ker(\Delta) \subseteq HB^{\perp}_{\pmb{\lambda'}}$ which now implies that $\mathrm{Ker}(\Delta) = HB^{\perp}_{\pmb{\lambda'}}$. 
    
    For the statement (iii), 
to describe any (homogeneous) relation of the form
$$
    \sum \alpha_T \Delta_{T^{(1)}} \vee \cdots \vee \Delta_{T^{(k)}} = 0 \qquad \text{or}\qquad \sum \beta_S \nabla_{S^{(1)}} \wedge \cdots \wedge \nabla_{S^{(k)}} = 0,
$$
we can rewrite it in the linear form due to ring product rule from Proposition~\ref{prop:ring}:
$$
    \sum \alpha_T \Delta_{(T^{(1)} \oplus \cdots \oplus T^{(k)})} = 0, 
    \qquad \text{or}\qquad 
    \sum \beta_S \nabla_{(S^{(1)}\oplus \cdots \oplus S^{(k)})} = 0.
$$
Thus all algebraic relations follow from linear relations among the highest weight vectors $\Delta$ and $\nabla$.     
\end{proof}

\section{Power expansions of the Cayley form for odd $d$}\label{sec:odd-d}
Let $d$ be odd, $\pmb{\lambda}$ be $d$-tuple of partitions of $m$. In this section we consider the dual spaces
$$
    \mathrm{HWV}_{\pmb{\lambda'}}\bigwedge^m (\mathbb{C}^k)^{\otimes d}\qquad\text{and}\qquad \mathrm{HWV}_{\pmb{\lambda}}\bigvee^m (\mathbb{C}^n)^{\otimes d}
$$
where we replaced $n \to k$ for wedge space to distinguish the spaces. This imposes restriction $\pmb{\lambda} \subseteq (n \times k)^{\times d}$.

\subsection{$\mathrm{SL}$-invariants} 
If we specialize to the rectangular weights $\pmb{\lambda} \to (n \times k)^d$, the highest weight polynomials (w.r.t. $\mathrm{GL}(n)^{\times d}$) and forms (w.r.t $\mathrm{GL}(k)^{\times d}$) become invariants w.r.t. $\mathrm{SL}(n)^{\times d}$ and $\mathrm{SL}(k)^{\times d}$, respectively. With these weights we will use the following notation for hypermatrices:
\begin{align*}
    \mathsf{A}_d(n,k) &:= A^{\lex}((n \times k)^d) = \mathsf{T}_{\mathbb{N}}((n \times k)^d),\\
    \mathsf{B}_d(k,n) &:= B^{\lex}((k \times n)^d) = \mathsf{T}_{01}((k \times n)^d),
\end{align*}
i.e. the first argument corresponds to the length of the cube and the second corresponds to the sum of entries in each slice.
These set equalities are identified via $d$-line notation. Note that the $(0,1)$-hypermatrix $T \in \mathsf{B}_d(n,k)$ is determined by its support $\mathrm{supp}(T) \subseteq [k]^d$.

Denote the special class of Kronecker coefficients at rectangular shapes:
$$
    g_d(n, k) := g(\underbrace{n\times k,\ldots, n \times k}_{d\text{ times}})
$$
and recall that $g_d(n,k) = \dim \mathrm{HWV}_{(n \times k)^d}\bigvee^{nk} (\mathbb{C}^n)^{\otimes d} = \dim \mathrm{HWV}_{(k \times n)^d}\bigwedge^{nk} (\mathbb{C}^k)^{\otimes d}$. 
The ring of $\mathrm{SL}$-invariant polynomials over $(\mathbb{C}^n)^{\otimes d}$ has the grade decomposition
$$
    \mathbb{C}[(\mathbb{C}^n)^{\otimes d}]^{\mathrm{SL}(n)^d}
    \cong
    \bigoplus_{k \ge 0} \mathrm{HWV}_{(n \times k)^d} \bigvee^{nk} (\mathbb{C}^n)^{\otimes d},
$$
but we fix $k$ (multiple of a degree) and iterate over $n$ instead. The resulting sequence of rectangular Kronecker coefficients $\{g_d(n,k)\}_{n \ge 0}$ counts grades' dimension of the following subspace of $\bigwedge (\mathbb{C}^k)^{\otimes d}$:
$$
    \bigoplus_{n \ge 0} \mathrm{HWV}_{(k \times n)^d} \bigwedge^{nk} (\mathbb{C}^k)^{\otimes d}.
$$
\begin{remark}
    We studied the sequence $g_d(n,k)$ in \cite{ayfund}. In particular, we showed the symmetry $g_d(n,k) = g_d(k^{d-1}-n, k)$ for $n \in [0,k^{d-1}]$ (note that $g_d(n,k) = 0$ for $n > k^{d-1}$) and conjectured that the sequence is unimodal for even $k$. In \cite{ayuni} we refined this unimodality conjecture for arbitrary weights $\pmb{\lambda}$ and proved refined version of this conjecture for $k = 2$.
\end{remark}

It is known that $g_d(1, k) = g_d(k^{d-1}, k) = 1$, which suggests the existence of unique invariants in the corresponding graded pieces of polynomials and forms. These invariants are particularly important. Let us discuss them.

{The first identity} $g_d(1,k) = 1$ for polynomials is trivial: there is a unique homogenious polynomial of degree $k$ in $\mathbb{C}[(\mathbb{C}^1)^{\otimes d}]$. While for forms, there is a special highest weight vector $\omega \in \mathrm{HWV}_{(k \times 1)^d}\bigwedge (\mathbb{C}^k)^{\otimes d}$, which we call the \textit{Cayley form} given by:
$$
    \omega = \frac{1}{k!}\sum_{\pi_1,\ldots\pi_d \in S_k} \sgn(\pi_1\cdots\pi_d) \bigwedge_{i=1}^k e_{\pi_1(i),\ldots,\pi_d(i)} = \nabla_{\begin{pmatrix}
        1^k\\
        \vdots\\
        1^k
    \end{pmatrix}} \in \bigwedge^k (\mathbb{C}^k)^{\otimes d},
$$
which is (up to a scale) unique highest weight vector of weight $(k \times 1)^d$. 
Note that the product in wedge space corresponds to the direct sum operation $\bigoplus$ on tables by Proposition~\ref{prop:ring}, and therefore, for each $n \le k^{d-1}$ we have 
\begin{align*}
    \omega^{n} = \nabla_{\begin{pmatrix}
        1^k \ldots n^k\\
        \vdots\\
        1^k \ldots n^k
    \end{pmatrix}} =: \nabla_{I_{n,k}} \in \mathrm{HWV}_{(n\times k)^d}\bigwedge^{nk} (\mathbb{C}^k)^{\otimes d}
\end{align*}
where we denote the corresponding $d \times nk$ table by $I_{n,k} \in \mathsf{A}_d(n,k)$. 

\vspace{0.5em}

{For the second identity} $g_d(k^{d-1}, k)=1$, there is a unique invariant polynomial $\Delta_{F_{d,k}}$, where $F_{d,k} \in \mathsf{B}_d(k, k^{d-1})$ is the $d \times k^{d}$ table with all distinct columns. In particular, $\mathrm{supp}(F^{(d)}_k) = [k]^d$. For example, for $d = 3$ we have the following table:
$$
    F_{3,k} = \begin{pmatrix}
        ~ & 1^{k^{2}} & ~ &\ldots & ~ & k^{k^{2}} & ~
        \\
        1^{k} & \ldots & k^{k} &
        \ldots & 
        1^{k} & \ldots & k^{k}
        \\
        12\ldots k & \ldots & 12\ldots k 
        & \ldots & 
        12\ldots k & \ldots & 12\ldots k 
    \end{pmatrix}.
$$
\begin{remark}
This invariant was introduced by B\"urgisser and Ikenmeyer in \cite{bi}.
This polynomial is a \textit{fundamental} invariant, in the sense that it is the smallest degree invariant in the ring $\mathbb{C}[(\mathbb{C}^{k^{d-1}})^{\otimes d}]^{\mathrm{SL}(n)^d}$ of degree $k^d$. \end{remark}

Note that the unique invariant in $\bigwedge^{k^d} (\mathbb{C}^{k})^{\otimes d}$ is the volume form $\bigwedge e_{[k]^d}$.

\subsection{Expansion of the Cayley form}
Using the main duality from Section~\ref{sec:duality} we give a conceptual explanation to power expansions of the Cayley form $\omega$. 

\begin{theorem}\label{thm:omega}
    Let $d \ge 3$ be odd. For each $n \le k^{d-1}$ we have:
    $$
        \omega^n = (-1)^{n \times k} \sum_{T\in \mathsf{B}_d(n,k)} \Delta_T(\mathsf{I}_n) \bigwedge e_T
    $$
    where sum runs over all $d \times nk$ tables of weight $(n \times k)^d$. In particular, for $n = k^{d-1}$ we have:
    $$
        \omega^{k^{d-1}} = (-1)^{k^{d-1} \times k} \Delta_{F_{d,k}}
        (\mathsf{I}_{k^{d-1}})
        \bigwedge e_{[k]^d}. 
    $$
\end{theorem}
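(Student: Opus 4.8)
The plan is to recognize $\omega^n$ as one of the forms $\nabla$, expand it in the standard basis of its weight space, and then read off the coefficients using the $\nabla$--$\Delta$ duality of Section~\ref{sec:duality}.

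First, as recorded just after the definition of $\omega$, the ring product rule (Proposition~\ref{prop:ring}) gives $\omega^n = \nabla_{I_{n,k}}$, where $I_{n,k}$ is the $d\times nk$ table whose every row equals $1^k 2^k\cdots n^k$; equivalently $I_{n,k}$ is the diagonal hypermatrix all of whose diagonal entries equal $k$. Since $I_{n,k}$ has weight $(n\times k)^d$ as a table, $\nabla_{I_{n,k}}$ lies in the appropriate weight space of $\bigwedge^{nk}(\mathbb{C}^k)^{\otimes d}$, and expanding it in the standard basis $\{\bigwedge e_T\}$ of this space gives $\omega^n = \sum_T b(I_{n,k},T)\,\bigwedge e_T$ with $b(I_{n,k},T)=\langle\nabla_{I_{n,k}},\bigwedge e_T\rangle$, the sum running over exactly the tables appearing in the statement.

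Next I would apply Lemma~\ref{lemma:ats}(c) (equivalently Theorem~\ref{th:duality2}(i)) with $S=I_{n,k}$, whose table weight is $\pmb{\lambda}=(n\times k)^d$. For odd $d$ it yields $a(T,I_{n,k}) = (-1)^{\pmb{\lambda}}\,b(I_{n,k},T)$, and $(-1)^{\pmb{\lambda}} = \big((-1)^{n\times k}\big)^{d} = (-1)^{n\times k}$ because $d$ is odd; hence $b(I_{n,k},T) = (-1)^{n\times k}\,a(T,I_{n,k})$. It then remains to identify $a(T,I_{n,k}) = \langle\Delta_T,\bigvee e_{I_{n,k}}\rangle$ with the evaluation $\Delta_T(\mathsf{I}_n)$. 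This is the single step that invokes the polynomial--tensor dictionary $\mathbb{C}[(\mathbb{C}^n)^{\otimes d}]\cong\bigvee(\mathbb{C}^n)^{\otimes d}$: writing $\Delta_T=\sum_S a(T,S)\,\bigvee e_S$ over the standard basis and using that $\mathsf{I}_n=\sum_{i=1}^n e_{i,\ldots,i}$ is supported on the diagonal, every $\bigvee e_S$ whose hypermatrix is not diagonal vanishes at $\mathsf{I}_n$; the only diagonal hypermatrix of the relevant weight is $I_{n,k}$ itself (each diagonal slice is forced to sum to $k$), and $\bigvee e_{I_{n,k}}$ evaluates to $1$ at $\mathsf{I}_n$, so $\Delta_T(\mathsf{I}_n)=a(T,I_{n,k})$ --- this is the form in which $\Delta_T(\mathsf{I}_n)$ is treated in \cite{ayfund}. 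Combining the three displays yields $\omega^n=(-1)^{n\times k}\sum_T \Delta_T(\mathsf{I}_n)\,\bigwedge e_T$.

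Finally, for $n=k^{d-1}$ the index set collapses: a $(0,1)$-hypermatrix of side $k$ each of whose slices sums to $k^{d-1}$ must be the all-ones hypermatrix on $[k]^d$, i.e. $T=F_{d,k}$, and $\bigwedge e_{F_{d,k}}=\bigwedge e_{[k]^d}$ is the volume form, which gives $\omega^{k^{d-1}}=(-1)^{k^{d-1}\times k}\,\Delta_{F_{d,k}}(\mathsf{I}_{k^{d-1}})\,\bigwedge e_{[k]^d}$. I do not expect a serious obstacle here: the statement is essentially a corollary of Proposition~\ref{prop:ring} and Lemma~\ref{lemma:ats}(c). The only points demanding care are the identification $a(T,I_{n,k})=\Delta_T(\mathsf{I}_n)$ and keeping the conjugation bookkeeping straight, which is precisely what pins down the sign $(-1)^{n\times k}$.
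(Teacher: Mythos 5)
Your proof is correct and follows essentially the same route as the paper's: identify $\omega^n = \nabla_{I_{n,k}}$ via the ring product rule, expand in the standard wedge basis, pass from $b(I_{n,k},T)$ to $a(T,I_{n,k})$ using the odd-$d$ duality (Lemma~\ref{lemma:ats}(c)/Theorem~\ref{th:duality2}(i)) with $(-1)^{\pmb\lambda}=(-1)^{n\times k}$, and identify $a(T,I_{n,k})$ with $\Delta_T(\mathsf{I}_n)$. The only difference is that you spell out the evaluation identification in more detail (diagonal support of $\mathsf{I}_n$ forcing $S = I_{n,k}$), which the paper treats as known.
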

\begin{proof}
    Since $\omega^n = \nabla_{I_{n,k}}$ as noted above, we have
    $$\nabla_{I_{n,k}} = \sum_{T \in \mathsf{B}_d(k,n)} b(T, I_{n,k}) \bigwedge e_T.$$
    By Theorem~\ref{th:duality2} we have $b(I_{n,k}, T) = (-1)^{n \times k} a(T, I_{n,k})$. 
    Since evaluation of the polynomial $\Delta_T$ at the unit tensor $\mathsf{I}_n$ is equal to the coefficient $a(T, I_{n,k})$, the result follows.
\end{proof}

\begin{remark}
We first proved this result about power expansions of $\omega$ in \cite{ayfund} by a different rather long combinatorial argument analyzing signs of Latin hypercubes. Now we have a conceptual proof of this result. Note also that this result is important for showing positivity of Kronecker coefficients. 
\end{remark}

\subsection{Latin hypercubes}\label{sec:latin}
    Evaluations of invariants at unit tensors has a nice combinatorial interpretation. For $T \in \mathsf{B}_d(n,k)$, let $L$ be a hypermatrix with $\mathrm{supp}(L) = T \subseteq [k]^d$. 
    
    We call $L$ a \textit{partial Latin hypercube}, if non-zero entries of any fixed slice form a permutation, conventionally collected in lexicographical order of $[k]^d$. When $\mathrm{supp}(T) = [k]^d$, we call $L$ a \textit{Latin hypercube}. Let $\mathsf{L}_d(T)$ and $\mathsf{L}_d(k)$ be the sets of partial Latin hypercubes with support $T$ and $[k]^d$, respectively. 
    
    The \textit{sign} or \textit{signature} $\sgn(L)$ of a partial Latin hypercube $L$ with support $T$ is defined to be the product over all slices (in all directions) of the slice permutation signs. We then define the sums
    $$
        \mathrm{AT}_d(T) := \sum_{L \in \mathsf{L}_d(T)} \sgn(L), \qquad 
        \mathrm{AT}_d(k) := \sum_{\mathsf{L}_d(k)} \sgn(L),
    $$
    which we call {$d$-dimensional Alon--Tarsi numbers}. 
    The following proposition is straightforward from Proposition~\ref{prop:interpretation}.
    \begin{proposition}
    We have 
        $$\Delta_T(\mathsf{I}_n) = a(T, I_{n,k}) = (-1)^T \mathrm{AT}_d(T)$$ for any $d$ and $T \in \mathsf{B}_d(k, n)$. Moreover, $\mathrm{AT}_d(T) = 0$ for odd $k$. 
    \end{proposition}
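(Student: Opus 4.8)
The plan is to chain three elementary observations, the only real input being the combinatorial interpretation in Proposition~\ref{prop:interpretation}. First, I would prove $\Delta_T(\mathsf{I}_n) = a(T, I_{n,k})$ by expanding $\Delta_T$ in the standard monomial basis, $\Delta_T = \sum_{S \in \mathsf{A}_d(n,k)} a(T,S)\,\bigvee e_S$, and viewing each $\bigvee e_S$ as the degree-$nk$ monomial $\prod_{i=1}^{nk} e_{S(i)}$ in the tensor coordinates. Since the only nonzero coordinates of $\mathsf{I}_n = \sum_{a=1}^{n} e_{a,\ldots,a}$ sit at the constant multi-indices $(a,\ldots,a)$ with $a\in[n]$, the value $(\bigvee e_S)(\mathsf{I}_n)$ vanishes unless every column of $S$ is a constant vector in $[n]^d$; as $S$ has weight $(n\times k)^d$ and lexicographically ordered columns, the unique such $S$ is $I_{n,k}$, at which the monomial takes the value $1$. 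Hence $\Delta_T(\mathsf{I}_n) = a(T,I_{n,k})$.

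Next, I would identify $a(T,I_{n,k})$ with $(-1)^T \mathrm{AT}_d(T)$ via Proposition~\ref{prop:interpretation}, which gives $a(T,I_{n,k}) = (-1)^T \sum_{\sigma\colon T\to I_{n,k}} \sgn(\sigma)$ over $(T,I_{n,k})$-fillings. A bijection $\sigma$ from the $nk$ cells of $\mathrm{supp}(T)\subseteq[k]^d$ onto the columns of $I_{n,k}$ is nothing but an assignment $p\mapsto L_p\in[n]$ using each value exactly $k$ times, and since the columns of $I_{n,k}$ are constant, the $\ell$-th coordinate of $\sigma(p)$ equals $L_p$ for every $\ell$; the filling condition — that for each slice $E^{(\ell)}_j$ these $\ell$-th coordinates form a permutation of $[n]$ — then says exactly that the values of $L$ along each axis-slice of $T$ form a permutation of $[n]$. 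So $(T,I_{n,k})$-fillings are precisely the partial Latin hypercubes with support $T$, and the sign $\sgn(\sigma)$, being the product over all slices of the slice-permutation signs, equals $\sgn(L)$; summing yields $\sum_\sigma\sgn(\sigma) = \mathrm{AT}_d(T)$, and hence $a(T,I_{n,k}) = (-1)^T\mathrm{AT}_d(T)$.

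Finally, for the vanishing when $k$ is odd (here $d$ is odd throughout this section), I would use a sign-reversing involution on $\mathsf{L}_d(T)$: for $n\ge 2$, relabelling every entry of $L$ by a fixed transposition $\tau$ of $[n]$ preserves the support, is a fixed-point-free involution (each slice-permutation already uses all of $[n]$), and flips each of the $dk$ slice-permutation signs; since $dk$ is odd this multiplies $\sgn(L)$ by $-1$, forcing $\mathrm{AT}_d(T) = -\mathrm{AT}_d(T) = 0$. Equivalently, $(\tau,\ldots,\tau)\in\mathrm{GL}(n)^{\times d}$ fixes $\mathsf{I}_n$ while scaling the rectangular-weight ($\mathrm{SL}(n)^{\times d}$-invariant) degree-$nk$ polynomial $\Delta_T$ by $\det(\tau)^{\,dk}=(-1)^{dk}$, so $\Delta_T(\mathsf{I}_n)=0$.

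No step is genuinely deep: after Step~1 and Proposition~\ref{prop:interpretation} everything is bookkeeping, so the ``hard'' part — though routine — is the matching in Step~2, where one must verify that the left-to-right reading order built into $\sgn_T(\cdot)$ coincides with the lexicographic-order-of-$[k]^d$ convention used to sign partial Latin hypercubes, so that $\sgn(\sigma)$ and $\sgn(L)$ agree block by block. I would also note that Step~3 is applied only once $n\ge 2$, which already covers the cases used below, in particular $\mathrm{AT}_d(k)$ where $n=k$.
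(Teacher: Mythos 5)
Your proof is correct and follows the same route the paper implicitly intends: the authors offer no argument beyond ``straightforward from Proposition~\ref{prop:interpretation},'' and your three steps are exactly the bookkeeping that reference hides. Step~1 (evaluation at $\mathsf{I}_n$ picks out the single monomial $\bigvee e_{I_{n,k}}$, giving $a(T,I_{n,k})$) and Step~2 (a $(T,I_{n,k})$-filling is an assignment $p\mapsto L_p\in[n]$ using each value $k$ times with the slice-permutation condition, which is precisely a partial Latin hypercube on $T$, with matching signs) are fine. Step~3 is a clean sign-reversing involution and also correct, and your alternate phrasing via $(\tau,\ldots,\tau)\cdot\mathsf{I}_n=\mathsf{I}_n$ together with the $(\det)^{dk}$ character of the rectangular weight space is the representation-theoretic counterpart of the same computation.

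Two small points worth recording. First, you are right that $n\ge 2$ is genuinely needed: for $n=1$ and any $T\in\mathsf{B}_d(k,1)$ there is a unique partial Latin hypercube (all entries equal $1$) of positive sign, so $\mathrm{AT}_d(T)=1\neq 0$; the statement as printed omits this hypothesis, and your remark correctly flags it. You should also note that the odd-$d$ hypothesis, in force throughout Section~8, is essential for the vanishing (for $d=2$, $k=3$ one can exhibit $T$ with $\mathrm{AT}_2(T)\neq 0$), so reading ``for any $d$'' as attaching only to the first identity is the right interpretation. Second, a minor slip: the full Latin hypercube case $\mathrm{AT}_d(k)$ corresponds to $n=k^{d-1}$, not $n=k$; this does not affect your argument since $k^{d-1}\ge 2$ whenever $k\ge 2$, but the phrasing should be corrected.
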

    
 \begin{remark}
 We refer to \cite{ayfund} for more on Latin hypercubes. 
 \end{remark}

    \begin{remark}
        While it is evident that $\Delta_{F_{d,k}} \neq 0$ and $\Delta_{F_{d,k}}(\mathsf{I}_n) = \pm \mathrm{AT}_{d}(k)$, there are other nonzero coefficients in the fundamental invariant $\Delta_{F_{d,k}}$. Similarly, $\nabla_{I_{k^{d-1}, k}} = \mathrm{AT}_d(k) \cdot \bigwedge e_{[k]^d}$ and there are other indices $S \in \mathsf{A}_d(k^{d-1}, k)$ for which $\nabla_{S} \neq 0$. For such $S$, in fact, $\nabla_S \neq 0$ if and only if $\langle \Delta_{F_{d,k}}, \vee e_S \rangle \neq 0$.    
    \end{remark}
    
\begin{remark}[On the Alon--Tarsi conjecture and Kronecker positivity]    
    Recall that the Alon--Tarsi conjecture on Latin squares \cite{at} (cf. \cite{hr}) 
    states that $\mathrm{AT}_2(k) \neq 0$ for even $k$. For $d = 3$, a similar problem about Latin cubes was posed by Burgisser--Ikenmeyer \cite{bi} asking if $\mathrm{AT}_3(k) \neq 0$ for even $k$ (which holds for $k = 2,4$). As we studied in \cite{ayfund}, this problem can be generalized for any $d$ given that $\mathrm{AT}_d(2) \ne 0$  and that the statement $\mathrm{AT}_d(k) \ne 0$ for even $k$ and odd $d$ implies positivity of Kronecker coefficients; namely, using the power expansions of the Cayley form $\omega$ we showed that $\mathrm{AT}_2(k) \ne 0$ implies that $g_d(n,k) > 0$ for all $n \le k$, while $\mathrm{AT}_3(k) \ne 0$ implies that $g_d(n,k) > 0$ for all $n \le k^{d-1}$; furthermore, unconditionally for even $k$ and odd $d$ we have $g_d(n,k) > 0$ for $n \le \max\{2^{d-1}, \sqrt{n}/2 - 1 \}$ \cite{ayfund, ayuni}, which uses the facts that $\mathrm{AT}_2(p \pm 1) \ne 0$ for odd primes $p$ \cite{dri1, glynn} and $\mathrm{AT}_d(2) \ne 0$. 
\end{remark}



\section{Power expansions of Cayley's first hyperdeterminant for even $d$}\label{sec:even-d}
    In this section we let $d$ to be even and consider spaces:
    $$
        \mathrm{HWV}_{\pmb{\lambda'}} \bigvee (\mathbb{C}^n)^{\otimes d} 
        \qquad \text{and} \qquad 
        \mathrm{HWV}_{\pmb{\lambda}} \bigvee (\mathbb{C}^k)^{\otimes d}
    $$
    Recall that the highest weight and weight spaces can be described as follows:
    The symmetry of Kronecker coefficients $g(\pmb{\lambda}) = g(\pmb{\lambda'})$ for even $d$ confirms the connection between underlying highest weight spaces which we established earlier: 
    For $(S,T) \in A(\pmb{\lambda}) \times B(\pmb{\lambda})$ we have
    \begin{align*}
        \Delta_{S} = \sum_{S' \in A(\pmb{\lambda'})} 
            \langle \Delta_{S'}, \bigvee e_S \rangle \bigvee e_{S'},\qquad
        \nabla_{T} = \sum_{T' \in B(\pmb{\lambda'})} 
            \langle \nabla_{T'}, \bigwedge e_T \rangle \bigwedge e_{T'},
    \end{align*}
    where $S'$ and $T'$ run over $d$-tables of conjugate weight.

\subsection{Expansion of Cayley's first hyperdeterminant}
We are going to show the next application of the duality on invariant polynomials. We will use the tables  $I_{n,k}$ and $F_{d,k}$ defined in the previous section.

Let us recall Cayley's first hyperdeterminant $\delta = \delta_{d,k}$ defined as follows: 
$$
    \delta := \frac{1}{k!}\sum_{\sigma \in (S_k)^d} \sgn(\sigma) \bigvee_{i=1}^k e_{\sigma(i)}
    = \Delta_{I_{1,k}} \in \mathbb{C}[(\mathbb{C}^k)^{\otimes d}]_{k},
$$
where $\sgn(\sigma) = \sgn(\sigma_1)\cdots\sgn(\sigma_d)$ for $\sigma = (\sigma_1, \ldots, \sigma_d)$. Note that this formula becomes identical zero for odd $d$. More recognized form would be:
$$
    \delta^*(X) = \frac{1}{k!}\sum_{\sigma_1,\ldots,\sigma_d \in S_k} \sgn(\sigma_1\cdots\sigma_d) \prod_{i=1}^k X_{\sigma_1(i)\ldots\sigma_d(i)}
$$

\begin{remark}\label{remark:hdet}
    The invariant $\omega$ (discussed in previous section) can be regarded as a $k$-form version of Cayley's first hyperdeterminant $\delta$: for tensor $X = \sum_{i=1}^n e_i \otimes X_i \in (\mathbb{C}^n)^{\otimes d}$:
$$
    \delta^*(X) = \omega^*(X_1, \ldots, X_k)
$$
for 
even $d$. This invariant was introduced by Cayley \cite{cay,cay2}. 
Notably, it is the smallest degree $\mathrm{SL}(k)^{\times d}$-invariant polynomial over $(\mathbb{C}^k)^{\otimes d}$ for even $d$ and is unique up to a scalar. For odd $d$, the above formula gives identical zero; moreover, the smallest degree of an invariant is always at least $n\lceil n^{1/(d-1)} \rceil$ \cite{ayfund} which corresponds to the degree of the fundamental invariant $\Delta_{F_{d,k}}$ for $n = k^{d-1}$. 
\end{remark}

\begin{remark}
The hyperdeterminant $\delta$ can also be characterized as the unique function (up to a scalar) satisfying multi-linearity and skew-symmetry in argument's slices in any fixed direction, see \cite{ayhdet}.
\end{remark} 

In particular, the powers of $\delta$ can be written as
$
    \delta^n = \Delta_{I_{n,k}},
$
since the product in the polynomial ring is translated to $\oplus$ operation on tables by Proposition~\ref{prop:ring}. It is known that for $d = 2$, the coefficient of the power of the ordinary determinant $\det^k$ at $e_{11}e_{12}\ldots e_{kk}$ is equal to the Alon--Tarsi number $(-1)^{\floor{k/2}}\mathrm{AT}_2(k)$. 
 We prove the following generalization of this fact, which becomes an analogue Theorem~\ref{thm:omega} about the power expansions of $\omega$ for even $d$. 

\begin{theorem}
    Let $d \ge 2$ be even. For any $n \ge 1$ we have:
    $$
        \delta^n = \sum_{S \in \mathsf{A}_{d}(k, n)} \Delta_{S}(\mathsf{I}_n) \cdot \bigvee e_S,
    $$
    In particular, the coefficient of $\delta^{k^{d-1}}$ at $\bigvee e_{[k]^d}$ is equal to $\pm \mathrm{AT}_d(k)$.
\end{theorem}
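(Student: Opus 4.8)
The plan is to imitate the proof of Theorem~\ref{thm:omega} (the odd-$d$ expansion of $\omega$), with the exterior algebra replaced by the symmetric one and the form $\nabla$ replaced by the polynomial $\Delta$, and then to invoke the self-dual case (ii) of Theorem~\ref{th:duality2}.

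First I would rewrite $\delta^n$ as a single vector $\Delta_{I_{n,k}}$: by definition $\delta = \Delta_{I_{1,k}}$, and since $I_{n,k} = I_{1,k}\oplus\cdots\oplus I_{1,k}$ ($n$ copies) while the product on $\bigvee(\mathbb{C}^k)^{\otimes d}$ corresponds to horizontal concatenation of tables by Proposition~\ref{prop:ring}, one gets $\delta^n = \Delta_{I_{n,k}}$. Expanding in the standard basis of $\mathrm{WV}_{(k\times n)^d}\bigvee(\mathbb{C}^k)^{\otimes d}$ then gives $\delta^n = \sum_{S\in\mathsf{A}_d(k,n)} a(I_{n,k},S)\,\bigvee e_S$, where $a(I_{n,k},S) = \langle\Delta_{I_{n,k}},\bigvee e_S\rangle$ and $S$ runs over all $d\times nk$ lex-tables of weight $(k\times n)^d$.

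The heart of the argument is then to identify the coefficient $a(I_{n,k},S)$ with the evaluation $\Delta_S(\mathsf{I}_n)$. On the one hand, Lemma~\ref{lemma:ats}(d) (equivalently Theorem~\ref{th:duality2}(ii)) gives $a(I_{n,k},S) = (-1)^{\pmb{\lambda}}\,a(S,I_{n,k})$ with $\pmb{\lambda} = (k\times n)^d$; since $d$ is even and $\pmb{\lambda}$ is a $d$-tuple of one and the same partition $k\times n$, the sign collapses to $(-1)^{\pmb{\lambda}} = ((-1)^{k\times n})^d = +1$. On the other hand, just as in the odd-$d$ case, expanding $\Delta_S = \sum_{S'\in\mathsf{A}_d(n,k)} a(S,S')\,\bigvee e_{S'}$ and substituting the unit tensor $\mathsf{I}_n = \sum_{i=1}^n e_{i,\ldots,i}$ annihilates every monomial $\bigvee e_{S'}$ except the one whose support sits on the main diagonal of the cube; the only $\mathbb{N}$-hypermatrix of weight $(n\times k)^d$ with diagonal support is $I_{n,k}$ itself, so $\Delta_S(\mathsf{I}_n) = a(S,I_{n,k})$. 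Combining the two equalities yields $a(I_{n,k},S) = \Delta_S(\mathsf{I}_n)$ and hence the claimed expansion. For the ``in particular'' clause I would set $n = k^{d-1}$ and read off the coefficient of $\bigvee e_{[k]^d} = \bigvee e_{F_{d,k}}$, where $F_{d,k}\in\mathsf{A}_d(k,k^{d-1})$ is the $(0,1)$-table with support $[k]^d$; it equals $\Delta_{F_{d,k}}(\mathsf{I}_{k^{d-1}})$, which by the Latin-hypercube interpretation in \textsection\,\ref{sec:latin} equals $(-1)^{F_{d,k}}\mathrm{AT}_d(F_{d,k}) = \pm\mathrm{AT}_d(k)$.

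I do not expect a genuine obstacle here: given Theorem~\ref{th:duality2}, the argument is essentially bookkeeping. The only subtle points are the vanishing of the sign $(-1)^{\pmb{\lambda}}$ for even $d$ --- precisely the phenomenon that makes the symmetric case ``self-dual on the nose'' --- and keeping the weight conventions straight (distinguishing $k\times n = (n^k)$ from $n\times k = (k^n)$, tracking conjugate weights through Lemma~\ref{lemma:ats}, and correctly matching $\bigvee e_{[k]^d}$ with the index $F_{d,k}$); these are where a careless slip would cause trouble.
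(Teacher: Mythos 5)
Your proposal follows essentially the same argument as the paper's own proof: rewrite $\delta^n = \Delta_{I_{n,k}}$ via Proposition~\ref{prop:ring}, apply the even-$d$ self-duality $a(I_{n,k},S)=(-1)^{\pmb{\lambda}}a(S,I_{n,k})$ from Lemma~\ref{lemma:ats}(d), note the sign is $+1$, and identify $a(S,I_{n,k})=\Delta_S(\mathsf{I}_n)$. Your extra justification that only the diagonally supported hypermatrix $I_{n,k}$ survives evaluation at $\mathsf{I}_n$ is correct, and the notational discrepancy between your $\pmb{\lambda}=(k\times n)^d$ and the paper's $(n\times k)^d$ is immaterial since $(-1)^{\pmb{\lambda}}=(-1)^{\pmb{\lambda'}}$ (remark after Lemma~\ref{lemma:gst}).
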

\begin{proof}
    Let us rewrite the power of the hyperdeterminant in the form of $\Delta$-polynomial: $\delta^{n} = \Delta_{I_{n, k}}$ for $I_{n,k} \in \mathsf{A}_d(n,k)$. For each $S \in \mathsf{A}_d(k, n)$, the coefficient at $\bigvee e_S$ in $\Delta_{I_{n,k}}$ is the number $a(I_{n, k}, S)$, which by the symmetry of coefficients from Lemma~\ref{lemma:ats}(d) satisfies
    $$a(I_{n,k}, S) = (-1)^{\pmb{\lambda}}a(S, I_{n,k}) = (-1)^{\pmb{\lambda}}\Delta_{S}(\mathsf{I}_n),$$ 
    where $\pmb{\lambda} = (n\times k)^d$. Note, that $(-1)^{\pmb{\lambda}} = ((-1)^{n \times k})^d = 1$, since $d$ is even.
    In particular, for $n = k^{d-1}$ and $F = F_{d,k}$ we have
    $$
    a(I_{k^{d-1}, k}, F) = a(F, I_{k^{d-1}, k}) = \Delta_{F}(\mathsf{I}_n) = (-1)^{F} \mathrm{AT}_d(k),
    $$
    and the proof follows.
\end{proof}

Power expansions $\delta^{n}$ contain other curious coefficients.
It turns out that the appropriate power of $\delta^n$ contains both the number of $c$-dimensional Latin hypercubes and the signed sum of Latin hypercubes $\mathrm{AT}_c(k)$ for any even $c \le d/2$. 

\begin{proposition}
    Let $d$ and $ c \in [2, d/2]$ be both even. Consider the following $d \times k^{c}$ tables:
    $$
    T_{odd} = \begin{pmatrix}
        ~ & ~ & ~\\
        ~ & F_{c,k} & ~\\
        ~ & ~ & ~\\
        \hline
        12 \cdots k & \cdots & 12\cdots k\\
        \vdots & \vdots & \vdots\\
        \vdots & \vdots & \vdots\\
        \vdots & \vdots & \vdots\\
        12\cdots k & \cdots & 12\cdots k
    \end{pmatrix}, 
    \qquad
    T_{even} = \begin{pmatrix}
        ~ & ~ & ~\\
        ~ & F_{c,k} & ~\\
        ~ & ~ & ~\\
        \hline
        ~ & ~ & ~\\
        ~ & F_{c,k} & ~\\
        ~ & ~ & ~\\
        \hline
        12\cdots k & \cdots & 12\cdots k\\
        \vdots & \vdots & \vdots\\
        12\cdots k & \cdots & 12\cdots k
    \end{pmatrix}.
    $$ 
    Then the coefficients of $\delta^{k^{c-1}}$ at $\bigvee e_{T_{even}}$ is $|\mathsf{L}_c(k)|$ and at $\bigvee e_{T_{odd}}$ is $\mathrm{AT}_c(k)$.
    In fact, the same holds for $T_{odd}$ if $c > d/2$.
\end{proposition}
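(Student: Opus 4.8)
The plan is to reduce the statement to an evaluation of the invariant polynomials $\Delta_{T_{odd}}$ and $\Delta_{T_{even}}$ at a unit tensor, and then to recognize those evaluations as (signed) enumerations of Latin hypercubes. By the power‑expansion theorem for $\delta^n$ proved immediately above, applied with $n=k^{c-1}$, the coefficient of $\delta^{k^{c-1}}$ at $\bigvee e_T$ equals $\Delta_T(\mathsf{I}_{k^{c-1}})$ for every $T\in\mathsf{A}_d(k,k^{c-1})$. First I would check that $T_{odd}$ and $T_{even}$ are in fact $(0,1)$‑tables, i.e. lie in $\mathsf{B}_d(k,k^{c-1})$: their columns are pairwise distinct because the top $c$ rows (a copy of $F_{c,k}$) already determine the column, and the slice sums equal $k^{c-1}$ as required. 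Hence the combinatorial identity $\Delta_T(\mathsf{I}_n)=(-1)^T\,\mathrm{AT}_d(T)$ from Section~\ref{sec:latin} applies, and it remains to compute $\mathrm{AT}_d(T_{odd})$ and $\mathrm{AT}_d(T_{even})$, the signed sums over $d$‑dimensional partial Latin hypercubes supported on $\mathrm{supp}(T_{odd})$ and $\mathrm{supp}(T_{even})$ inside $[k]^d$.

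The combinatorial core is the observation that the lower rows of these tables only repeat constraints already imposed by the first $c$ rows. Index the $k^c$ columns by $\mathbf a\in[k]^c$, so that $\mathrm{supp}(T_{odd})=\{(\mathbf a,\mathbf a_c,\dots,\mathbf a_c)\}$ and $\mathrm{supp}(T_{even})=\{(\mathbf a,\mathbf a,\mathbf a_c,\dots,\mathbf a_c)\}$. The slice of $\mathrm{supp}(T_{odd})$ in direction $\ell$ with fixed value $u$ is $\{\mathbf a:\mathbf a_\ell=u\}$ for $\ell\le c$ and $\{\mathbf a:\mathbf a_c=u\}$ for $\ell>c$; for $T_{even}$, direction $c+\ell$ with $\ell\le c$ again gives $\{\mathbf a_\ell=u\}$ and the directions beyond $2c$ give $\{\mathbf a_c=u\}$. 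Therefore a $d$‑dimensional partial Latin hypercube $L'$ on either support is precisely the same data as a $c$‑dimensional Latin hypercube $L\in\mathsf{L}_c(k)$: the conditions coming from directions $1,\dots,c$ are exactly the defining constraints of such an $L$, and all other directions are redundant. Reading signs slice by slice and using that $d$ and $c$ are both even — so $d-c$ and $d-2c$ are even — the redundant directions of $T_{odd}$ contribute $\bigl(\prod_u\sgn(L|_{\{\mathbf a_c=u\}})\bigr)^{d-c}=1$, whence $\sgn(L')=\sgn(L)$; for $T_{even}$ directions $c+1,\dots,2c$ contribute $\sgn(L)^2=1$ and the directions beyond $2c$ contribute $1$, whence $\sgn(L')=1$. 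Consequently $\mathrm{AT}_d(T_{odd})=\sum_{L\in\mathsf{L}_c(k)}\sgn(L)=\mathrm{AT}_c(k)$ and $\mathrm{AT}_d(T_{even})=\sum_{L\in\mathsf{L}_c(k)}1=|\mathsf{L}_c(k)|$.

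Assembling the pieces, the coefficient of $\delta^{k^{c-1}}$ at $\bigvee e_{T_{even}}$ is $(-1)^{T_{even}}|\mathsf{L}_c(k)|$; since the rows of $T_{even}$ consist of two identical copies of the rows of $F_{c,k}$ together with an even number $d-2c$ of identical ``counter'' rows $12\cdots k\,12\cdots k\cdots$, one has $(-1)^{T_{even}}=\bigl((-1)^{F_{c,k}}\bigr)^2\cdot 1=1$, so this coefficient equals $|\mathsf{L}_c(k)|$ exactly. Similarly the coefficient at $\bigvee e_{T_{odd}}$ is $(-1)^{T_{odd}}\mathrm{AT}_c(k)$ with $(-1)^{T_{odd}}=(-1)^{F_{c,k}}$ (the $d-c$ counter rows, an even number of identical words, cancel), i.e. $\mathrm{AT}_c(k)$ up to the explicit sign $(-1)^{F_{c,k}}$ (and for odd $k$ both sides vanish). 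Finally, the hypothesis $c\le d/2$ is used only to make $T_{even}$ a legitimate $d$‑row table (one needs $d-2c\ge 0$); the whole argument for $T_{odd}$ requires only $c\le d$, which explains the last sentence of the statement.

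The step I expect to be the main obstacle is the sign bookkeeping in the bijection: one must carefully match the lexicographic convention used to read off the slice permutations of a Latin hypercube with the column order of $T_{odd},T_{even}$ and with the block structure implicit in $\sgn_T(\cdot)$ and $(-1)^T$, and in particular verify that the counter rows reproduce the slices of the $c$‑th direction with the correct ordering and value‑labelling. This is routine but must be done to pin down the prefactor $(-1)^{F_{c,k}}$ in the $T_{odd}$ case; it is harmless for $T_{even}$, where the prefactor is manifestly trivial.
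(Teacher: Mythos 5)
Your proof is essentially the same as the paper's, differing only in which combinatorial interpretation it passes through. The paper reduces to $a(I,T_{parity})=a(T_{parity},I)$ via the theorem directly above, then observes that each row of $T_{odd}$ (resp.\ $T_{even}$) repeats a row of $F_{c,k}$ an odd (resp.\ even) number of times, so $\sgn_{T_{odd}}(\pi I)=\sgn_{F_{c,k}}(\pi I')$ and $\sgn_{T_{even}}(\pi I)=|\sgn_{F_{c,k}}(\pi I')|$, and reads off the answer from $a(F_{c,k},I')$. You instead invoke $\Delta_T(\mathsf{I}_n)=(-1)^T\mathrm{AT}_d(T)$ and carry out the equivalent computation at the level of partial Latin hypercubes, showing that a hypercube supported on $T_{odd}$ or $T_{even}$ is the same data as an element of $\mathsf{L}_c(k)$ with the redundant directions contributing trivial signs because $d-c$ and $d-2c$ are even. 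Same key observation, same parity bookkeeping, two dialects of the same argument.

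Two small remarks. First, you correctly flag that the $T_{odd}$ coefficient is $(-1)^{F_{c,k}}\,\mathrm{AT}_c(k)$ rather than $\mathrm{AT}_c(k)$ on the nose; the paper's proof itself concedes this by concluding ``$\pm\mathrm{AT}_d(k)$'' (which is moreover a typo for $\pm\mathrm{AT}_c(k)$, since the intermediate quantity is $a(F_{c,k},I')$ with $I'$ a $c$-row table), and the subsequent corollary for $c=2$ makes the sign $(-1)^{k/2}$ explicit. So your sign accounting is the more precise version of what the paper states. Second, your explanation of the last sentence of the proposition — that $c\le d/2$ is only needed so $T_{even}$ fits in $d$ rows, while the $T_{odd}$ computation needs only $c\le d$ and that $d-c$ remain even — is correct and is not spelled out in the paper's proof.
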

\begin{proof}
    Let $I = I_{k^{c-1},k}$. The desired coefficients are then $a(I, T_{even})$ and $a(I, T_{odd})$. Let us look into their expansions by definition \eqref{eq:ats-bst}. Note that 
    $$\sgn_{T_{odd}}(\pi I) = \sgn_{F_{c,k}}(\pi I) \text{  and  }\sgn_{T_{even}}(\pi I) = |\sgn_{F_{c,k}}(\pi I)|$$ 
    for any $\pi \in S_{k^{c}}$, since each row is repeated \textit{parity} (i.e. odd or even) number of times in the table $T_{parity}$. Thus, 
    $$a(I, T_{odd}) = a(T_{odd}, I) = a(F_{c,k}, I') = \pm\mathrm{AT}_d(k),$$ 
    where $I'$ is formed by the first $c$ rows of $I$. Similarly, for $a(T_{even}, I)$ each non-vanishing term in the sum has a positive sign, giving $|\mathsf{L}_c(k)|$.
\end{proof}

\begin{corollary}
    For the $4 \times k^2$ tables
    $$
    T_{even} = \begin{pmatrix}
        11\cdots 1 \cdots kk \cdots k\\
        11\cdots 1 \cdots kk \cdots k\\
        12\cdots k \cdots 12\cdots k\\
        12\cdots k \cdots 12\cdots k
    \end{pmatrix}, \qquad
    T_{odd} = \begin{pmatrix}
        11\cdots 1 \cdots kk \cdots k\\
        11\cdots 1 \cdots kk \cdots k\\
        11\cdots 1 \cdots kk \cdots k\\
        12\cdots k \cdots 12\cdots k
    \end{pmatrix}
    $$
    the coefficient of $\delta_{4,k}^k$ at $\bigvee e_{T_{even}}$ is equal to the number of Latin squares $|\mathsf{L}_2(n)|$ and at $\bigvee e_{T_{odd}}$ is equal to $(-1)^{k/2}\mathrm{AT}_2(k)$.
\end{corollary}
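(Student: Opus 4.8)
The plan is to identify this corollary as the case $d=4$, $c=2$ of the preceding Proposition and then pin down the two signs that Proposition leaves implicit. First I would note that all four rows of each of $T_{even}$ and $T_{odd}$ are among the two rows of $F_{2,k}$; write $v = 1^k2^k\cdots k^k$ for the first row of $F_{2,k}$ and $w$ for its second row, the concatenation of $k$ copies of $12\cdots k$. In $T_{even}$ each of $v,w$ occurs twice, while in $T_{odd}$ one occurs three times and the other once. These are exactly the features used by the parity argument in the proof of that Proposition, which is insensitive to the ordering and distribution of the repeated rows because $a(\cdot,I_{k,k})$ is invariant under permuting the $d$ rows of its first argument — indeed $I_{k,k}$ has all $d$ rows equal and $a(\tau X,\tau Y)=a(X,Y)$ for $\tau\in S_d$ permuting directions. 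Since $\delta_{4,k}^k=\Delta_{I_{k,k}}$ and the coefficient of $\bigvee e_T$ in $\Delta_{I_{k,k}}$ is $a(I_{k,k},T)=(-1)^{\pmb{\lambda}}a(T,I_{k,k})=a(T,I_{k,k})$ by Lemma~\ref{lemma:ats}(d) with $\pmb{\lambda}=(k\times k)^4$ (so $(-1)^{\pmb{\lambda}}=\big((-1)^{k\times k}\big)^4=1$), the task reduces to evaluating $a(T_{even},I_{k,k})$ and $a(T_{odd},I_{k,k})$.

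For $T_{odd}$, the parity argument reduces $a(T_{odd},I_{k,k})$ to $a(F_{2,k},I')$, where $I'$ is the $2\times k^2$ all-diagonal table (both rows equal to $v$); by the identity of Section~\ref{sec:latin}, namely $a(T,I_{n,k})=(-1)^T\mathrm{AT}_d(T)$, applied in $d=2$ directions, this equals $(-1)^{F_{2,k}}\mathrm{AT}_2(F_{2,k})=(-1)^{F_{2,k}}\mathrm{AT}_2(k)$, the last step because $\mathrm{supp}(F_{2,k})=[k]^2$, so partial Latin squares with that support are exactly $k\times k$ Latin squares. Then I would compute $(-1)^{F_{2,k}}=(-1)^v(-1)^w$: we have $(-1)^v=+1$ since $v$ is weakly increasing, and $(-1)^w=(-1)^{\binom{k}{2}^2}=(-1)^{\binom{k}{2}}=(-1)^{k/2}$ for even $k$, since $w$ has $\binom{k}{2}^2$ inversions (each of the $\binom{k}{2}$ ordered pairs of copies contributes $\binom{k}{2}$ inversions, with none inside a copy). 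Hence the coefficient of $\delta_{4,k}^k$ at $\bigvee e_{T_{odd}}$ is $(-1)^{k/2}\mathrm{AT}_2(k)$.

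For $T_{even}$, since each of $v,w$ occurs an even number of times, in the expansion $a(T_{even},I_{k,k})=(-1)^{T_{even}}\sum_\sigma\sgn(\sigma)$ of Proposition~\ref{prop:interpretation} every $(T_{even},I_{k,k})$-filling $\sigma$ satisfies $\sgn(\sigma)=1$ — the slice-sign product over each of the two coinciding pairs of directions is a square — so $\sum_\sigma\sgn(\sigma)$ is the plain number of such fillings, which the preceding Proposition identifies with $|\mathsf{L}_2(k)|$; and $(-1)^{T_{even}}=1$ because $v$ and $w$ each occur an even number of times. Therefore the coefficient of $\delta_{4,k}^k$ at $\bigvee e_{T_{even}}$ is exactly $|\mathsf{L}_2(k)|$.

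I do not expect a genuine obstacle here: the combinatorial content is already carried by the preceding Proposition, and the only points needing care are the inversion count behind $(-1)^{F_{2,k}}=(-1)^{k/2}$ and the remark that relabelling directions between these tables and those in the Proposition leaves $a(\cdot,I_{k,k})$ unchanged, which holds because $I_{k,k}$ is symmetric in its $d$ rows.
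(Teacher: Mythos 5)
Your proof is correct and follows exactly the route the paper intends: the corollary is the $d=4$, $c=2$ specialization of the preceding Proposition, with the signs left implicit there made explicit (in particular, your inversion count giving $(-1)^{F_{2,k}}=(-1)^{\binom{k}{2}^2}=(-1)^{k/2}$ for even $k$ is right, as is the observation that $(-1)^{(n\times k)^4}=1$ so $a(I_{k,k},T)=a(T,I_{k,k})$). One small correction to the justification, not the conclusion: the corollary's $T_{odd}=(v,v,v,w)$ (three rows $v$, one row $w$) is \emph{not} a direction-permutation of the Proposition's $T_{odd}=(v,w,w,w)$, since permuting the four directions preserves the multiplicity of each row. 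So the appeal to $a(\tau X,\tau Y)=a(X,Y)$ with $I_{k,k}$ symmetric in its rows covers the $T_{even}$ case (where $(v,v,w,w)$ and $(v,w,v,w)$ do differ by a direction swap) but does not, by itself, reconcile the two $T_{odd}$'s. What does reconcile them is precisely the parity argument you invoke next: since every factor $\sgn_v(\pi v)$, $\sgn_w(\pi v)$ is $0$ or $\pm1$, one has $\sgn_v(\pi v)^3\sgn_w(\pi v)=\sgn_v(\pi v)\sgn_w(\pi v)^3$ term by term, and likewise $(-1)^{(v,v,v,w)}=(-1)^{(v,w,w,w)}=(-1)^v(-1)^w$ and the column stabilizers coincide; hence $a((v,v,v,w),I_{k,k})=a((v,w,w,w),I_{k,k})=a(F_{2,k},I')$. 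In other words the coefficient depends only on the parities of the two row-multiplicities, not on which of $v,w$ appears more often. With that phrasing amended, the argument is complete.
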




\end{document}